\newdimen\AAdi%
\newbox\AAbo%
\def\AAk#1#2{\s_etbox\AAbo=\hbox{#2}\AAdi=\wd\AAbo\kern#1\AAdi{}}%
\def\AAr#1#2#3{\s_etbox\AAbo=\hbox{#2}\AAdi=\ht\AAbo\raise#1\AAdi\hbox{#3}}%
\font\tenmsb=msbm10 at 12pt \font\sevenmsb=msbm7 at 8pt
\font\fivemsb=msbm5 at 6pt
\newtheorem{theorem}{Theorem}
\newtheorem{remark}[theorem]{Remark}
\newtheorem{corollary}[theorem]{Corollary}
\newtheorem{lemma}[theorem]{Lemma}
\newtheorem{proposition}[theorem]{Proposition}
\numberwithin{equation}{section} \numberwithin{theorem}{section}
\renewcommand{\topmargin}{0cm}
\renewcommand{\oddsidemargin}{5mm}
\renewcommand{\evensidemargin}{5mm}
\renewcommand{\textwidth}{150mm}
\renewcommand{\textheight}{230mm}
\def\R{\mathbb R}
\def\na{\nabla}
\def\f#1#2{\frac{#1}{#2}}
\def\a{\alpha}
\def\be{\beta}
\def\r{\Re_{I\!V}}
\def\p#1{\partial #1}
\def\de{\delta}
\def\De{\Delta}
\def\e{\eta}
\def\ep{\epsilon}
\def\G{\Gamma}
\def\g{\gamma}
\def\k{\kappa}
\def\la{\lambda}
\def\La{\Lambda}
\def\lan{\langle}
\def\ran{\rangle}
\def\Om{\Omega}
\def\th{\theta}
\def\Th{\Theta}
\def\si{\sigma}
\def\Si{\Sigma}
\def\r{\rho}
\def\z{\zeta}
\begin{document}

\title
[Minimal hypersurfaces in manifolds]
{Minimal hypersurfaces in manifolds of Ricci curvature bounded below}

\author{Qi Ding}
\address{Shanghai Center for Mathematical Sciences, Fudan University, Shanghai 200438, China}

\email{dingqi@fudan.edu.cn}
\thanks{
The author is partially supported by NSFC 11871156 and NSFC 11922106.}

\begin{abstract}
In this paper, we study the angle estimate of distance functions from minimal hypersurfaces in manifolds of Ricci curvature bounded from below using Colding's method in \cite{C}.
With Cheeger-Colding theory, we obtain the Laplacian comparison for limits of distance functions from minimal hypersurfaces in the version of Ricci limit space.
As an application, if a sequence of minimal hypersurfaces converges to a metric cone $CY\times\R^{n-k}(2\le k\le n)$ in a non-collapsing metric cone $CX\times\R^{n-k}$ obtained from ambient manifolds of almost nonnegative Ricci curvature, then we can prove a Frankel property for the cross section $Y$ of $CY$. 
Namely, $Y$ has only one connected component in $X$.

\end{abstract}

\maketitle

\section{Introduction}

Let $B_{2R}(p_i)$ be a sequence of $(n+1)$-dimensional smooth geodesic balls with $Ric\ge-n\k^2$ for some constant $\k\ge0$ such that $B_{2R}(p_i)$ converges to a metric ball $\overline{B_{2R}(p_\infty)}$ in the Gromov-Hausdorff sense.
Then for each integer $i\ge1$, there is an $\ep_i$-Gromov-Hausdorff approximation $\Phi_i:\, \overline{B_{2R}(p_i)}\to\overline{B_{2R}(p_\infty)}$ for some sequence $\ep_i\to0$.
Let $V_i$ be a sequence of rectifiable stationary $n$-varifold in $B_{2R}(p_i)$ such that spt$V_i\cap B_{R}(p_i)\neq\emptyset$. Denote $M_i\triangleq\mathrm{spt}V_i$.
Up to choosing the subsequence, we can assume that $\Phi_i(M_i)$ converges to $M_\infty\subset\overline{B_{2R}(p_\infty)}$ in the Hausdorff sense.
For studying $M_\infty$, we need that the renormalized volume of $M_i$ is uniformly bounded, i.e.,
\begin{equation}\aligned\label{BdMiBRipi0}
\limsup_{i\rightarrow\infty}\f{\mathcal{H}^n(M_i\cap B_{R}(p_i))}{\mathcal{H}^{n+1}(B_R(p_i))}<\infty.
\endaligned
\end{equation}
The condition of uniformly bounded volume
is always assumed for compactness of stationary varifolds in Euclidean space (see \cite{S} for instance).

In this paper, we will study $M_\infty\cap B_R(p_\infty)$ using the distance function $\r_{M_\infty}$ from $M_\infty$ on $B_{2R}(p_\infty)$.
Our object is twofold. On the one hand, we prove the connectivity of the cross section of minimal cones in a class of metric cones(see Theorem \ref{ConnectedY0}), which will be used to establish Poincar\'e inequality on minimal graphs over manifolds of Ricci curvature bounded below in \cite{D2}.
On the other hand, we wish to study minimal hypersurfaces in manifolds of Ricci curvature bounded below via understanding the local geometric structure of $M_\infty$.

In general, a $L^2$-Hessian estimate may not exist for the distance functions from $M_i$ (compared to distance functions from fixed points in manifolds).
However, Colding \cite{C} can approach distance functions from points successfully using harmonic functions.
The harmonic functions admit $L^2$-Hessian estimates using the Bochner formula and the Cheng-Yau gradient estimate \cite{CY}.
After ingenious partitions twice, Colding got the angle estimate for the distance functions in an integral version.
Based on Colding's method \cite{C}, we can deduce a similar angle estimate for the distance function from $M_i$ for each $i$ (see Lemma \ref{bzth*}).

In \cite{CCo1,CCo2,CCo3}, Cheeger-Colding established the structure theory of the Ricci limit space $B_{2R}(p_\infty)$.
Let $\nu$ denote the renormalized limit measure from $B_R(p_i)$ (see \eqref{nuinfty}). 
Cheeger-Colding proved the rectifiability of $B_{2R}(p_\infty)$, and that $B_{2R}(p_\infty)$ admits cotangent bundle. This enables us to study derivative and Laplacian for Lipschitz functions on $B_{2R}(p_\infty)$ (see also Cheeger \cite{Ch}). We will give a brief introduction for this knowledge in $\S 2$.
In \cite{H1}, Honda studied convergence of the differentials of Lipschitz functions via radial derivatives of distance functions with respect to the measured Gromov-Hausdorff topology.
Combining Honda's results and Lemma \ref{bzth*}, we can prove the following convergence (see Theorem \ref{c1}).
\begin{theorem}\label{DerMiconv000}
Let $B_{2R}(p_i)$, $B_{2R}(p_\infty)$, $M_i$, $M_\infty$ as in the first paragraph with \eqref{BdMiBRipi0}. 
For each Lipschitz function $\phi$ on $B_R(p_\infty)$, there is a sequence of Lipschitz functions $\phi_i$ on $B_R(p_i)$ satisfying $(\phi_i,d\phi_i)\to(\phi,d\phi)$ on $B_R(p_\infty)$ and $\limsup_{i\rightarrow\infty}\mathbf{Lip}\,\phi_i\le\mathbf{Lip}\,\phi$ such that
\begin{equation}\aligned
\lim_{i\rightarrow\infty}\fint_{B_R(p_{i})}\left\lan d\r_{M_{i}},d\phi_{i}\right\ran=\fint_{B_R(p_\infty)}\left\lan d\r_{M_\infty},d\phi\right\ran d\nu.
\endaligned
\end{equation}
Furthermore, if $\phi$ has compact support in $B_R(p_\infty)\setminus M_\infty$, then we can require that the function $\phi_i$ has compact support in $B_R(p_i)\setminus M_i$.
\end{theorem}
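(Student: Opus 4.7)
The plan is to approximate the distance function $\rho_{M_i}$ by a harmonic function $h_i$, so that the Cheeger-Colding-Honda convergence machinery for harmonic functions applies to the pairing. Following Colding \cite{C}, I would solve the Dirichlet problem for $h_i$ on $B_R(p_i)$ with boundary data a small Lipschitz regularization of $\rho_{M_i}|_{\partial B_R(p_i)}$, invoke the Cheng-Yau gradient estimate \cite{CY} for a uniform bound $|dh_i|\le C$, and combine the Bochner identity with the integrated angle estimate of Lemma \ref{bzth*} to extract an $L^2$-closeness
$$\fint_{B_R(p_i)}|d\rho_{M_i}-dh_i|^2\le\ep_i\longrightarrow 0$$
as $i\to\infty$, with an analogous estimate holding on the limit space after a further regularization.

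For the test functions, I would apply Honda's construction \cite{H1}: any Lipschitz $\phi$ on $B_R(p_\infty)$ can be approximated by $\phi_i$ on $B_R(p_i)$ satisfying $(\phi_i,d\phi_i)\to(\phi,d\phi)$ and $\limsup_i\mathbf{Lip}\,\phi_i\le\mathbf{Lip}\,\phi$, e.g.\ by composing a Lipschitz extension of $\phi$ with an $\ep_i$-Gromov-Hausdorff approximation and mollifying. The Cheeger-Colding convergence theorem for harmonic functions of uniformly bounded Lipschitz norm then yields
$$\lim_{i\to\infty}\fint_{B_R(p_i)}\langle dh_i,d\phi_i\rangle=\fint_{B_R(p_\infty)}\langle dh_\infty,d\phi\rangle\,d\nu$$
for a harmonic Lipschitz limit $h_\infty$ on $B_R(p_\infty)$. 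Cauchy-Schwarz, together with the $L^2$-closeness above and the uniform bound $\mathbf{Lip}\,\phi_i\le C$, controls the error $\fint|\langle d\rho_{M_i}-dh_i,d\phi_i\rangle|$ by $\sqrt{\ep_i}\cdot C$, while the limiting angle estimate on $B_R(p_\infty)$ controls the corresponding error for $d\rho_{M_\infty}$ versus $dh_\infty$; combining these gives the desired equality.

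The main obstacle is to transfer Lemma \ref{bzth*} and the $L^2$-closeness of $d\rho_{M_i}$ to $dh_i$ through the Gromov-Hausdorff limit. Because $B_{2R}(p_\infty)$ is only rectifiable (Cheeger-Colding \cite{CCo1,CCo2,CCo3}), the pairing $\langle d\rho_{M_\infty},d\phi\rangle$ must be interpreted via Cheeger's cotangent bundle \cite{Ch}, and its compatibility with Honda's convergence of differentials is what makes the argument go through. A second subtlety is that $\rho_{M_i}$ has no prescribed boundary behaviour on $\partial B_R(p_i)$, so either cutoffs supported in $B_{R-\tau}(p_i)$ must be used and $\tau\to 0$ sent afterwards, or one must allow $h_i$ to share the trace of $\rho_{M_i}$; the uniform angle bound must survive either choice, and this is where Lemma \ref{bzth*} is most delicate.

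For the compact-support variant, fix $\de>0$ with $\mathrm{spt}(\phi)\subset\{\rho_{M_\infty}\ge\de\}\cap B_{R-\de}(p_\infty)$, and replace $\phi_i$ by $\phi_i\chi_i$, where $\chi_i$ is a Lipschitz cutoff equal to $1$ on $\{\rho_{M_i}\ge\de/2\}\cap B_{R-\de/2}(p_i)$ and vanishing on $\{\rho_{M_i}\le\de/4\}\cup(B_R(p_i)\setminus B_{R-\de/4}(p_i))$. Hausdorff convergence $M_i\to M_\infty$ ensures $\mathrm{spt}(\phi_i\chi_i)\Subset B_R(p_i)\setminus M_i$ for $i$ large, and the uniform Lipschitz control on $\chi_i$ preserves $(\phi_i\chi_i,d(\phi_i\chi_i))\to(\phi,d\phi)$, so that the first part of the argument applies verbatim to the cutoff approximations.
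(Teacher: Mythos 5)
There is a genuine gap at the heart of your argument: the claimed global $L^2$-closeness $\fint_{B_R(p_i)}|d\r_{M_i}-dh_i|^2\le\ep_i\to0$ for a single harmonic function $h_i$ on $B_R(p_i)$ is false, and neither Lemma \ref{bzth*} nor Colding's method yields it. The distributional Laplacian of $\r_{M_i}$ carries a singular negative part of mass comparable to $2\mathcal{H}^n(M_i\cap B_R(p_i))$ concentrated on $M_i$ (plus the contribution of the ridge set), and this mass does not tend to zero under \eqref{BdMiBRipi0}; already for $M=\{x_{n+1}=0\}\subset\R^{n+1}$ one has $\r_M=|x_{n+1}|$, and $\inf_h\int_{B_1}|\na(|x_{n+1}|-h)|^2$ over harmonic $h$ is bounded below by a positive constant. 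This is exactly why Lemma \ref{nab-bz|} only gives an error $4r\mu^*(B_{2r}(z))$, which is of order $\mathcal{H}^{n+1}(B_{2r}(z))$ and becomes small only after Colding's double decomposition into balls of small radius $\th r$ (Lemma \ref{Decomb}); the harmonic approximations are local and scale-dependent, and there is no single $h_\infty$ to pass to on the limit. Your Cauchy--Schwarz step therefore has nothing to feed on, and Theorem \ref{bzth*} itself is not an $L^2$-gradient comparison with a harmonic function but an integrated estimate of radial difference quotients, which is how the paper actually uses it.

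A second gap is on the limit-space side: you invoke a ``limiting angle estimate on $B_R(p_\infty)$'' and an approximation of $\r_{M_\infty}$ by a harmonic $h_\infty$, but no such estimate is available there, and deducing a Laplacian bound for $\r_{M_\infty}$ on the limit would be circular, since \eqref{drMdphige0} is a consequence of this very theorem. The paper instead verifies Honda's definition of $d\r_{M_i}\to d\r_{M_\infty}$ directly: on the manifolds, Theorem \ref{bzth*} compares $\lan d\r_{M_i},d\r_{q_i}\ran$ with the difference quotient $t^{-1}(\r_{M_i}(\g_{x_i,q_i}(t))-\r_{M_i}(x_i))$ in an integral sense; on the limit, the analogous comparison for $\r_{M_\infty}$ comes from Honda's Theorem 3.3 together with Lusin and Egoroff; and the difference quotients converge because $\r_{M_i}\to\r_{M_\infty}$ uniformly, as in \eqref{rMixiMinfx}. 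Only after this does Theorem \ref{Honda} give the convergence of the pairing with $d\phi_i$. Your treatment of $\phi_i$ via Cheeger and Honda, and the cutoff construction for the compact-support case, are fine and essentially match the paper, but the core convergence statement needs the difference-quotient argument, not a global harmonic replacement.
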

From Theorem \ref{DerMiconv000} and the Laplacian comparison for $\r_{M_i}$ by Heintze-Karcher \cite{HeK} (see also Lemma 7.1 in \cite{D1} for the distribution sense),
we obtain (see \eqref{drMdphige0})
\begin{equation}\label{rMinfnk0}
\int_{B_R(p_\infty)}\left\lan d\r_{M_\infty},d\phi\right\ran d\nu\ge-n\k\int_{B_R(p_\infty)}\phi\tanh\left(\k\r_{M_\infty}\right)d\nu
\end{equation}
for each nonnegative Lipschitz function $\phi$ on $B_R(p_\infty)$ with compact support in $B_R(p_\infty)\setminus M_\infty$.

Let $R_i\ge0$ be a sequence with $R_i\rightarrow\infty$ as $i\rightarrow\infty$.
Let $B_{R_i}(q_i)$ be a sequence of $(n+1)$-dimensional smooth geodesic balls with Ricci curvature $\ge-nR_i^{-2}$ such that
$(B_{R_i}(q_i),q_i)$ converges to a metric cone $(\mathbf{C},\mathbf{o})$ in the pointed Gromov-Hausdorff sense with $\ep_i$-Gromov-Hausdorff approximations $\Phi_i$ for some $\ep_i\to0$.
Suppose $\liminf_{i\to\infty}\mathcal{H}^{n+1}(B_1(q_i))>0$, and $\mathbf{C}$ splits off a Euclidean factor $\R^{n-k}$ isometrically for some integer $1\le k\le n$.
Then there is a $k$-dimensional metric space $X$ such that $\mathbf{C}=CX\times\R^{n-k}$, where $CX$ is a metric cone of the cross section $X$ with the vertex $o$.
For each integer $i\ge1$, let $M_i$ be the support of rectifiable stationary $n$-varifold in $B_{R_i}(q_i)$ with $q_i\in M_i$ and \eqref{BdMiBRipi0} for some $R\ge2$.
Suppose that $\Phi_i(M_i)$ converges in the Hausdorff sense to a metric cone $CY\times\R^{n-k}$, where $CY$ is a metric cone with the vertex at $o$ and the cross section $Y\subset X$.

From \eqref{rMinfnk0}, we can prove that the distance function from $Y$ on $X$ is (strongly) superharmonic in the weak sense.
Then with the mean value inequality for weak superharmonic functions on $X$, we can derive the following result.
\begin{theorem}\label{ConnectedY0}
For $k\ge2$, $Y$ is connected in $X$. Namely, there are no non-empty closed sets $Y_1,Y_2\subset X$ with $Y=Y_1\cup Y_2$ and $Y_1\cap Y_2=\emptyset$.
\end{theorem}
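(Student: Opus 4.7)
The plan is to show that the distance function $u:=d_X(\cdot,Y)$ on the cross section is weakly (indeed strongly) superharmonic, and then derive connectedness of $Y$ from a mean value inequality for superharmonic functions together with a dimension count. For the first step, since the ambient Ricci bound is $-nR_i^{-2}$ with $R_i\to\infty$, the constant $\kappa$ in \eqref{rMinfnk0} goes to zero along this blow-down sequence; combined with Theorem \ref{DerMiconv000} this gives $\int\langle d\rho_{M_\infty},d\phi\rangle\,d\nu\ge0$ for every nonnegative Lipschitz $\phi$ compactly supported in $\mathbf{C}\setminus M_\infty$, so $\rho_{M_\infty}$ is weakly superharmonic on $\mathbf{C}=CX\times\R^{n-k}$. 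Because $\rho_{M_\infty}(x,t)=\rho_{CY}(x)$, testing against products $\phi_1(x)\phi_2(t)$ and integrating out $\R^{n-k}$ transfers the superharmonicity to $\rho_{CY}$ on $CX$. In the polar form of the cone one has $\rho_{CY}(r\theta)=r\sin u(\theta)$ on $\{u\le\pi/2\}$. Testing against a product $\phi(r)\psi(\theta)$ with the cone volume weight $r^k\,dr$ and integrating by parts in $r$ reduces the inequality to $\int_X\langle d\sin u,d\psi\rangle\ge k\int_X\sin u\,\psi$; together with the chain rule $\Delta_X\sin u=-\sin u+\cos u\,\Delta_X u$ (valid for a distance function since $|du|=1$) this yields $\Delta_X u\le -(k-1)\tan u\le0$ in the weak sense on $X\setminus Y$. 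The role of $k\ge2$ is already visible here in the sign.

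For the second step, assume for contradiction that $Y=Y_1\sqcup Y_2$ with both parts nonempty and closed, and let $d_0:=d_X(Y_1,Y_2)>0$. On the open neighbourhood $U_1:=\{\theta\in X:d_X(\theta,Y_1)<\min(d_0/2,\pi/4)\}$ one has $d_X(\cdot,Y)=d_X(\cdot,Y_1)$, so $u=d_X(\cdot,Y_1)$ is a continuous, nonnegative, weakly superharmonic function on $U_1\setminus Y_1$ that vanishes on $Y_1\subset U_1$. The mean value inequality for weakly superharmonic functions on the non-collapsed Ricci limit cross section $X$ then forces $u\equiv0$ on any connected component $V$ of $U_1$ that meets $Y_1$, so $V\subset Y_1$. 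But $V$ is a nonempty open subset of the $k$-dimensional space $X$, while $Y_1\subset Y$ has Hausdorff dimension $k-1$ and hence empty interior, a contradiction.

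The principal obstacle is the first step: carefully descending weak superharmonicity through both the Euclidean product and the cone into a statement purely on the singular cross section $X$. This will require the Cheeger--Colding calculus of limit cotangent bundles on metric cones, a product decomposition of the renormalized measure $\nu$, and justification in the Ricci limit setting of the identities $\rho_{CY}(r\theta)=r\sin d_X(\theta,Y)$ and $\Delta_X\sin u=-\sin u+\cos u\,\Delta_X u$. A secondary subtlety is extending the minimum principle across the lower-dimensional set $Y_1$ in the Frankel step; the hypothesis $k\ge2$ enters precisely here, since for $k=1$ the set $Y$ is zero-dimensional and can genuinely be disconnected.
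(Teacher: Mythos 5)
Your first step is essentially the paper's Lemma \ref{limsuper}: descending \eqref{rMinfnk0} (with $\k\to0$) through the $\R^{n-k}$ factor and the cone variable to get \eqref{drYdpsi}, i.e.\ $\int_X\lan d\r_Y,d\psi\ran_X\ge(k-1)\int_X\psi\tan\r_Y$ on $\mathscr{B}_{\pi/2}(Y)\setminus Y$; that part is sound. The genuine gap is in your second step. You invoke the mean value inequality for superharmonic functions on balls that meet (indeed are centered on) $Y_1$, but the hypothesis of Lemma \ref{MVIsuper} is $\int\lan du,d\e\ran_X\ge0$ for \emph{all} nonnegative Lipschitz $\e$ supported in the whole ball $\mathscr{B}_{2r}(x)$ — including test functions whose support crosses $Y_1$. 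Your superharmonicity holds only on $U_1\setminus Y_1$; across $Y_1$ the distance function is not superharmonic at all: its distributional Laplacian carries a nonnegative singular part concentrated on $Y_1$ (compare $|x_k|$ in $\R^k$, or the term $2\mathcal{H}^n(\Si\cap\cdot)$ in \eqref{muOm}, which records exactly this positive contribution). Consequently the claimed propagation ``$u\equiv0$ on any component of $U_1$ meeting $Y_1$'' is false as a statement about such functions: in the model $X=\mathbb{S}^k$ with $Y$ the equator, $u=\r_Y$ vanishes on $Y$, satisfies $\De_Xu=-(k-1)\tan u\le0$ off $Y$, yet $u\not\equiv0$ in any neighborhood of $Y$. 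So the minimum principle cannot be run at the zero set of $u$, and the dimension count that follows (``$Y_1$ has Hausdorff dimension $k-1$'') is in any case not established — what the paper proves is only $\mathcal{H}^k(B_t(Y))\le\La_nt\,\mathcal{H}^k(X)$ in \eqref{VolBtY}.

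The paper's proof of Theorem \ref{ConnectedY} avoids this by a Frankel-type midpoint argument: pick $y_1\in Y_1$, $y_2\in Y_2$ realizing $\th=d_X(Y_1,Y_2)$ and let $w$ be the midpoint of a minimizing geodesic, so that $\varphi=\r_{Y_1}+\r_{Y_2}-\th\ge0$ with $\varphi(w)=0$, where now $w$ lies at distance $\th/2>0$ from $Y$. Near $w$ both $\r_{Y_1}$ and $\r_{Y_2}$ are weakly superharmonic with the strict lower bound $(k-1)(\tan\r_{Y_1}+\tan\r_{Y_2})>0$ as in \eqref{rY*+Z*}; applying Lemma \ref{MVIsuper} at $w$ is then legitimate because superharmonicity of $\varphi$ holds on the entire ball $\mathscr{B}_\de(w)$, and $\varphi\equiv0$ near $w$ contradicts the strict inequality — this is where $k\ge2$ enters. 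Two further ingredients of the paper are absent from your outline and would be needed in any repaired argument along these lines: one must first rule out $\th=\pi$ (Proposition \ref{VolXpi} together with the Poincar\'e inequality \eqref{*Brphi}), and to get the superharmonicity of $\r_{Y_1}$ and $\r_{Y_2}$ \emph{separately} near $w$ one decomposes $M_i$ in a ball about $w_i$ (away from the vertex) into two stationary pieces converging to $CY_1\times\R^{n-k}$ and $CY_2\times\R^{n-k}$ before passing to the limit via Theorem \ref{c1} and \eqref{drMdphige0}.
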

Here, $k\ge2$ is necessary in Theorem \ref{ConnectedY0} since there are stationary varifolds in $\R^2$ composed by several ($\ge3$) radial lines through the origin. From Theorem \ref{ConnectedY0}, clearly the cross section of $CY\times\R^l$ is connected in the cross section of $CX\times\R^l$ for any integer $1\le l\le n-k$.
A simple version of Theorem \ref{ConnectedY0} is a well-known result that every minimal hypercone in Euclidean space $\R^{n+1}$ has the connected cross section in the unit sphere $\mathbb{S}^n$.
Theorem \ref{ConnectedY0} can be seen as a Frankel property in the metric space $X$, 
where Frankel \cite{F} proved that if $\Si_1,\Si_2$ are two complete compact minimal hypersurfaces in a complete connected manifold of positive Ricci curvature, then $\Si_1,\Si_2$ must intersect.

The paper is organized as follows. 
In $\S 3$, we investigate the measure related to the Laplacian of distance functions from hypersurfaces with bounded mean curvature using functions of bounded variation. In $\S 4$, we study the angle estimate for such distance functions based on Colding's method \cite{C}. In $\S 5$, we study the limits of the Laplacian of distance functions from minimal hypersurfaces in a sequence of manifolds of Ricci curvature bounded below.
As an application, in $\S 6$ we prove a Frankel property on cross sections of a class of metric cones.

\section{Preliminary}

Let $(X,d)$ be a complete metric space with the Radon measure $\mu$. The measure $\mu$ is said to be \emph{Ahlfors $k$-regular} at $x\in X$,
if there exists a constant $K_x\ge1$ such that
$$K_x^{-1}r^k\le\mu(B_r(x))\le K_xr^k$$
for all $0<r\le 1$.
The space $X$ is said to be \emph{$\mu$-rectifiable} (see Cheeger-Colding \cite{CCo3} for instance), if there exists an
integer $m$, a countable collection of Borel subsets, $C_{k,i}\subset X$ with $k\le m$ and bi-Lipschitz maps $\phi_{k,i}:\, C_{k,i}\rightarrow\phi_{k,i}(C_{k,i})\subset\R^k$ such that
\begin{itemize}
  \item [i)] $\mu\left(X\setminus\cup_{k,i}C_{k,i}\right)=0$;
  \item [ii)] $\mu$ is Ahlfors $k$-regular at all $x\in C_{k,i}$.
\end{itemize}

Let $f$ be a Borel function on $X$, and $g:\, X\rightarrow[0,\infty]$ be a Borel function such that for all points
$x_1,x_2\in X$ and all continuous rectifiable curves $c:\, [0,l]\rightarrow X$, parameterized by arclength $s$ with $c(0)=x_1$, $c(l)=x_2$, we have
\begin{equation}\aligned\label{fLipgradg}
|f(x_2)-f(x_1)|\le\int_0^l g(c(s))ds.
\endaligned
\end{equation}
In this case, $g$ is called an \emph{upper gradient} for $f$ (see \cite{Ch,CCo3}).
For a Lipschitz function $f$ on $X$, we define the pointwise Lipschitz constant
\begin{equation}\aligned\label{DefLip}
\mathrm{Lip}\, f(x)=\limsup_{r\rightarrow0}\sup_{y\in\p B_r(x)}\f{|f(x)-f(y)|}{r}=\limsup_{y\rightarrow x, y\neq x}\f{|f(x)-f(y)|}{d(x,y)}
\endaligned
\end{equation}
for each $x\in X$. Clearly, \eqref{fLipgradg} holds with $g=\mathrm{Lip}\, f$ (see Cheeger \cite{Ch} for further discussion).
Let $\mathbf{Lip}\, f=\sup_{x\in X}\mathrm{Lip}\, f(x)$ denote the Lipschitz constant of $f$ on $X$.

For $R>0$ and $\k\ge0$,
let $B_{2R}(p_i)$ be a sequence of $(n+1)$-dimensional smooth geodesic balls with $Ric\ge-n\k^2$ such that $B_{R}(p_i)$ converges to an $(n+1)$-dimensional metric ball $\overline{B_{R}(p)}$ in the Gromov-Hausdorff sense.
From Cheeger-Colding \cite{CCo1,CCo2,CCo3}, there is a unique Radon measure $\nu$ on $B_{R}(p)$ given by
\begin{equation}\aligned\label{nuinfty}
\nu(B_r(y))=\lim_{i\rightarrow\infty}\f{\mathcal{H}^{n+1}(B_r(y_i))}{\mathcal{H}^{n+1}(B_R(p_i))}
\endaligned
\end{equation}
for every ball $B_r(y)\subset B_{R}(p)$ and for each sequence $y_i\in B_{R}(p_i)$ converging to $y$.
Here, $\mathcal{H}^{n+1}$ is the $(n+1)$-dimensional Hausdorff measure.
Denote $Z=B_{R}(p)$.
From \cite{CC,CCo3}, there holds the segment inequality (a refinement of the Poincar$\mathrm{\acute{e}}$ inequality) for $B_R(p_i)$ as well as the limit $Z$.
Moreover, Theorem 5.5 and Theorem 5.7 in \cite{CCo3} tell us that $Z$ is $\nu$-rectifiable, and the following condition iii) holds:
\begin{itemize}
  \item [iii)] For all $x\in \cup_{k,i}C_{k,i}$ and all $\la>0$, there exists $C_{k,i}$ such that $x\in C_{k,i}$ and the map $\phi_{k,i}:\, C_{k,i}\rightarrow\phi_{k,i}(C_{k,i})\subset\R^k$ is $e^{\pm\la}$-bi-Lipschitz.
\end{itemize}
A point $z\in Z$ is said to be a $k$-regular point if every tangent cone at $z$ is isometric to $\R^k$.
Let $\mathcal{R}_k$ denote the set of $k$-regular points in $Z$.
By Theorem 1.18 of Colding-Naber \cite{CN}, there is a unique integer $k_*\le n+1$ such that $\nu(Z\setminus \mathcal{R}_{k_*})=0$.

From \cite{Ch,CCo3}, a finite dimensional cotangent ($L_\infty$ vector) bundle $T^*Z$ exists on $Z$, and for each Lipschitz function $f$ on an open set $U\subset Z$ one can define \emph{differential} of $f$, denoted by $df$, which is a $\nu$-a.e. well defined $L_\infty$ section of $T^*U\subset T^*Z$, and the pointwise inner product $\lan df_1,df_2\ran$ $\nu$-a.e. for every Lipschitz functions $f_1,f_2$ on $U$.
Moreover, there is a Borel set $U^*\subset U$ such that $\nu(U\setminus U^*)=0$, and $|df_i|(x)=\mathrm{Lip}\, f_i(x)$ for every $x\in U^*$ and $i=1,2$. Obviously, the differential here satisfies the Leibnitz rule. Moreover,
\begin{equation}\aligned\label{quad}
\lan df_1,df_2\ran=\f14\mathrm{Lip}\,(f_1+f_2)^2-\f14\mathrm{Lip}\,(f_1-f_2)^2\qquad \mathrm{on}\ U^*.
\endaligned
\end{equation}
As a consequence, the Dirichlet energy $|\mathrm{Lip}\, f|$ is associated to a quadratic form which is obtained from the bilinear form
$$\int_U\lan df_1,df_2\ran d\nu.$$
From the uniqueness of strong derivatives in $L^2$ sense (Theorem 6.7 in \cite{CCo3}), it follows that the corresponding
Laplace operator is linear self-adjoint.
With (2.5) in \cite{CCo3}, H\"older inequality and Theorem 1 in \cite{HaK}, the following Poincar$\mathrm{\acute{e}}$ inequality
\begin{equation}\aligned\label{qPoincare}
\int_{B_r(z)}\left|f-\fint_{B_r(z)}fd\nu\right|^qd\nu\le c_{n,\k r }r^q\int_{B_{r}(z)}|df|^qd\nu
\endaligned
\end{equation}
holds for any $B_{2r}(z)\subset Z$, $q\ge1$, any Lipschitz function $f$ on $B_{2r}(z)$, where $c_{n,\k r}$ is a constant depending only on $n,\k r$, and $\fint_{B_r(z)}fd\nu$ is the average of $f$ on $B_r(z)$ defined by $\f1{\nu(B_r(z))}\int_{B_r(z)}fd\nu$. Moreover, the inequality \eqref{qPoincare} holds for all functions in the Sobolev space $H^{1,q}(Z)$ (see $\S 4$ in \cite{Ch}).

Let $N$ be an $(n+1)$-dimensional complete Riemannian manifold.
For an open set $\Om\subset N$,
there is an integer $m\ge1$ so that $\overline{\Om}$ is properly embedded in $\R^{n+m}$.
We use the definitions of Allard \cite{Al} for varifolds on $N$ (see also chapter 8 of \cite{S}, or \cite{wn1}).
Let $G_{n,m}$ denote the (Grassmann) manifold including all the $n$-dimensional subspace of $\R^{n+m}$.
An $n$-varifold $V$ in $\Om$ is a Radon measure on
$$G_{n,m}(\Om)=\{(x,T)|\, x\in \Om,\, T\in G_{n,m}\cap T_xN\}.$$
An $n$-rectifiable varifold in $\Om$ is an $n$-varifold in $\Om$ with support on countably $n$-rectifiable sets.
The $n$-rectifiable varifold $V$ is said to have the generalized mean curvature (vector field) $H$ in $\Om$ if
\begin{equation}\aligned\nonumber
\int_{G_{n,m}(\Om)}\mathrm{div}_\omega YdV(x,\omega)=-\int_{G_{n,m}(\Om)}\lan Y,H\ran dV(x,\omega)
\endaligned
\end{equation}
for each $Y\in C^\infty_c(\Om,\R^{n+m})$ with $Y(x)\in T_xN$ for each $x\in \Om$. In particular, $H(x)\in T_xN$ for a.e. $x\in \mathrm{spt} V$. The notion of $n$-rectifiable varifold obviously generalises the notion of $n$-dimensional $C^1$-submanifold.

$\mathbf{Notional\ convention}.$ If a point $q$ belongs to some metric space $Z$ defined in the paper, we let $d(q,\cdot)$ denote the distance function on $Z$ from $q$, and always let $B_r(q)$ denote the geodesic ball in this $Z$ with radius $r$ and centered at $q$. 
For a subset $K\subset Z$, let $B_r(K)$ denote $r$-tubular neighborhood of $K$ in $Z$. 
For each integer $k>0$, let $\omega_k$ denote the volume of $k$-dimensional unit Euclidean ball, and $\mathcal{H}^k$ denote the $k$-dimensional Hausdorff measure.
When we write an integation on a subset of a Riemannian manifold w.r.t. some volume element, we always omit the volume element if it is associated with the standard metric of the given manifold.

\section{Laplacian of distance functions from hypersurfaces}

Let $N$ be an $(n+1)$-dimensional complete Riemannian manifold with Ricci curvature $\mathrm{Ric}_N\ge-n\k^2$ on $B_{R+R'}(p)$ for some constants $\k\ge0$ and $R'\ge R>0$.
Let $\De_N$ denote the Laplacian of $N$, and $\na$ be the Levi-Civita connection of $N$.
Let $V$ be an $n$-rectifiable varifold in $B_{R+R'}(p)$ with the generalized mean curvature $H$ in $B_{R+R'}(p)$ (see its definition in $\S 2$). 
Let  $\La_0$ be a positive constant, and $\Si$ be the support of $V$. Suppose 
$$H_0\triangleq\sup_\Si|H|<\infty,\quad\Si\cap B_{R'}(p)\neq\emptyset, \quad \mathcal{H}^n(\Si)\le\La_0.$$ 
By a similar argument of Theorem 17.7 in \cite{S} (see also the proof of Theorem 5.1 in \cite{DJX}), there are constants $r_0>0,\be_0>\omega_n/2$ depending on $H_0$, $\La_0$ and the geometry of $B_{R+R'}(p)$ such that
\begin{equation}\aligned\label{locVolSi}
\f12\omega_n r^n\le\mathcal{H}^n(B_r(x)\cap \Si)\le\be_0r^n
\endaligned
\end{equation}
for any $x\in \Si\cap B_R(p$) and any $r\in(0,r_0)$. 

Let $\r_\Si$ denote the distance function from $\Si$ in $N$.
At a differentiable point $x\in B_{R}(p)\setminus\Si$ of $\r_\Si$,
there exist a unique $x'\in \Si$ and a unique non-zero vector $v_x\in\R^{n+1}$ with $|v_x|=\r_\Si(x)$ such that
$\mathrm{exp}_{x'}(v_x)=x$.
Let $\g_x$ denote the geodesic $\mathrm{exp}_{x'}(tv_x/|v_x|)$ from $t=0$ to $t=|v_x|$. In particular, $\r_\Si$ is smooth at $\g_x(t)$ for each $t\in(0,|v_x|]$.
Let $H_x(t), A_x(t)$ denote the mean curvature function (pointing out of $\{\r_\Si<t\}$), the second fundamental form of the level set $\{\r_\Si=t\}$ at $\g_x(t)$, respectively.
From Heintze-Karcher \cite{HeK}, there is a function $\Th$ with $|\Th|\le\max\{H_0,n\k\}$ such that 
\begin{equation}\aligned\label{DeNSiTh000}
\De_N\r_\Si=-H_x(t)\big|_{t=\r_\Si}\le \Th\qquad \mathrm{at}\ \g_x(\r_\Si).
\endaligned
\end{equation}
In fact, from the variational argument,
\begin{equation}\label{HtRic}
\f{\p H_x}{\p t}=|A_x|^2+Ric\left(\dot{\g}_x,\dot{\g}_x\right)\ge\f1n|H_x|^2-n\k^2.
\end{equation}
Clearly, the above ordinary differential inequality implies $H_x(t)\ge-\max\{H_0,n\k\}$ for each $t\in[0,|v_x|]$. So one can choose the function $\Th$ satisfying $|\Th|\le\max\{H_0,n\k\}$. (For a special situation, one may choose a suitable $\Th$, where the argument in $\S 3$ and $\S 4$ still works.)
From \eqref{DeNSiTh000} and the argument of the proof of Lemma 7.1 in \cite{D1},
\begin{equation}\aligned\label{DeNSiTh}
\De_N\r_\Si\le \Th\qquad \mathrm{on}\ B_{R}(p)\setminus \Si
\endaligned
\end{equation}
in the distribution sense. Namely,
\begin{equation}\aligned\label{DeNSiTh*}
\int_{B_{R}(p)}\left\lan\na\r_{\Si},\na\phi\right\ran\ge-\int_{B_{R}(p)}\phi\Th
\endaligned
\end{equation}
for any Lipschitz function $\phi\ge0$ with compact support in $B_{R}(p)\setminus \Si$, where we omitted the standard volume element of $B_R(p)$ in the above integration.

For an open $U$ in $N$,
let $\mathrm{Lip}_0(U)$ denote the space including all Lipschitz functions $f$ on $\overline{U}$ with $f=0$ on $\p U$.
Let $L$ denote a linear operator on the Sobolev space $W^{1,1}(B_{R}(p))$ defined by
\begin{equation}\aligned\label{Lphi}
L\phi=\int_{B_{R}(p)}\lan\na\phi,\na\r_\Si\ran+\int_{B_{R}(p)}\phi\Th
\endaligned
\end{equation}
for each function $\phi\in W^{1,1}(B_{R}(p))$.
From \eqref{DeNSiTh*}, we have
\begin{equation}\aligned
L\phi\ge0\qquad \mathrm{for\ each}\ \ \phi\ge0,\ \phi\in \mathrm{Lip}_0(B_{R}(p)\setminus\Si),
\endaligned
\end{equation}
which implies
\begin{equation}\aligned\label{phi1phi2}
L\phi_1\le L\phi_2\qquad \mathrm{for\ each}\ 0\le\phi_1\le\phi_2\ \mathrm{with}\ \phi_1,\phi_2\in \mathrm{Lip}_0(B_{R}(p)\setminus\Si).
\endaligned
\end{equation}

For a set $K$ with $\overline{K}\subset N$, let $B_t(K)$ be the $t$-tubular neighborhood of $K$ in $N$, i.e., $B_t(K)=\{x\in N|\,\inf_{y\in K}d(x,y)<t\}$.
For a bounded open set $\Om$ in $N$, let $\mathrm{Per}(\Om)$ denote the perimeter of $\Om$ defined by (see \cite{Gi} for the Euclidean case)
\begin{equation}\aligned
\mathrm{Per}(\Om)=\sup\left\{\int_\Om \mathrm{div}_N \mathscr{X}|\, \mathscr{X}\in \G^1_c(TN),\, |\mathscr{X}|\le1\ \mathrm{on}\ N\right\},
\endaligned
\end{equation}
where $\mathrm{div}_N$ denotes the divergence of $N$ w.r.t. its metric, $\G^1_c(TN)$ denotes the space containing all $C^1$ tangent vector fields on $N$ with compact supports.
Suppose $\mathrm{Per}(\Om)<\infty$.
Let $\sigma\in C^\infty_c((-1,1))$ be a symmetric function with $\int_{\R^{n+1}}\si(|z|^2)dz=1$, $\si(0)>0$, and $\si_\ep(x,y)=\ep^{-n-1}\si(\ep^{-2}d^2(x,y))$ for all $x,y\in N$, where $d(\cdot,\cdot)$ is the distance function on $N$. Then
\begin{equation}\aligned
\lim_{\ep\to0}\int_{y\in N}\si_\ep(x,y)=\int_{\R^{n+1}}\si(|z|^2)dz=1.
\endaligned
\end{equation}
Let $\chi_{_\Om}$ be the characteristic function of $\Om$.
For any small $\ep\in(0,1]$, let $w_{\ep}=w_{\ep,\Om}$ be a convolution of $\chi_{_\Om}$ and $\si_\ep$ defined by
\begin{equation}\aligned\label{wep}
w_\ep(x)=(\chi_{_\Om}*\si_\ep)(x)=\int_{y\in N} \chi_{_\Om}(y)\si_\ep(x,y)=\int_{y\in\Om}\si_\ep(x,y).
\endaligned
\end{equation}
Since the function $d^2(x,y)$ is smooth for small $d(x,y)$,
then $w_\ep$ is a smooth function with compact support in $\overline{B_\ep(\Om)}$. From \cite{AFP} or \cite{Gi}, we have
\begin{equation}\aligned
\mathrm{Per}(\Om)=\lim_{\ep\to0}\int_N\left|\na w_\ep\right|^2.
\endaligned
\end{equation}
By Sard's theorem, for every small $\ep>0$, $\p\{x\in N|\ w_\ep(x)>t\}$ is smooth for almost every $t$.
From the co-area formula and the semi-continuity of functions of bounded variation, there is a sequence $t_i\to0$ such that $\Om_i\triangleq\{x\in N|\ w_{t_i}(x)>1/i\}$ has smooth boundary with $\overline{\Om_i}\subset\overline{\Om}\setminus\p\overline{\Om}$, $\mathcal{H}^n(\p\Om_i\cap\Si)=0$ and (see \cite{AFP} or \cite{Gi})
\begin{equation}\aligned\label{perOmOmi}
\mathrm{Per}(\Om)=\lim_{i\to\infty}\mathcal{H}^n(\p\Om_i).
\endaligned
\end{equation}

For any real function $f$, we denote $f^+=\max\{0,f\}$ and $f^-=\max\{0,-f\}$.
\begin{lemma}\label{Radon}
For an open set $\Om\subset B_R(p)$ and a function $\varphi\in \mathrm{Lip}_0(\Om)$, we have
\begin{equation}\aligned\label{OmDvarDr}
L\varphi\le\sup_\Om\varphi^+\left(\mathrm{Per}(\Om)+2\mathcal{H}^n\left(\Si\cap\overline{\Om}\setminus\p\overline{\Om}\right)+\int_{\Om}|\Th|\right)
+2\left(\sup_{\Si\cap\Om}\varphi^-\right)\mathcal{H}^n\left(\Si\cap\Om\right).
\endaligned
\end{equation}
\end{lemma}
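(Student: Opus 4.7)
The plan is to prove \eqref{OmDvarDr} by inserting a suitable nonnegative Lipschitz test function into the distributional inequality $L\phi\ge0$, which by \eqref{DeNSiTh*} and \eqref{Lphi} holds for any $\phi\ge0$ compactly supported in $B_R(p)\setminus\Si$. Set $M^+:=\sup_\Om\varphi^+$, $M^-:=\sup_{\Si\cap\Om}\varphi^-$, and extend $\varphi$ by $0$ outside $\Om$ (still Lipschitz since $\varphi\in\mathrm{Lip}_0(\Om)$); then $M^+-\varphi\ge0$ pointwise on $N$. I would take
$$
\phi_{\ep,\eta}\;:=\;w_{\ep,\Om}\,\z_\eta\,(M^+-\varphi),
$$
where $w_{\ep,\Om}$ is the convolution smoothing of $\chi_\Om$ from \eqref{wep} and $\z_\eta:=\min\{\r_\Si/\eta,1\}$ is a Lipschitz cutoff vanishing on $\Si$. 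A harmless modification of $\z_\eta$ (composing with a further cutoff in $\r_\Si$) makes it vanish on an open neighborhood of $\Si$, so $\phi_{\ep,\eta}\in\mathrm{Lip}_0(B_R(p)\setminus\Si)$ is a valid test function and $L\phi_{\ep,\eta}\ge 0$; the modification does not affect the subsequent limits.

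Expanding $\na\phi_{\ep,\eta}$ by the Leibniz rule and rearranging $L\phi_{\ep,\eta}\ge 0$ yields
\begin{align*}
\int w_\ep\z_\eta\bigl(\lan\na\varphi,\na\r_\Si\ran+\varphi\Th\bigr)\;&\le\;\int(M^+-\varphi)\z_\eta\lan\na w_\ep,\na\r_\Si\ran\\
&\quad+\int(M^+-\varphi)w_\ep\lan\na\z_\eta,\na\r_\Si\ran+M^+\!\int w_\ep\z_\eta\Th.
\end{align*}
Sending $\ep\to 0$: the weak convergence of $|\na w_\ep|$ to the perimeter measure $|D\chi_\Om|$ concentrated on $\p^*\Om$, the bound $|\lan\na w_\ep,\na\r_\Si\ran|\le|\na w_\ep|$, and the vanishing $\varphi=0$ on $\p\Om$ (so $M^+-\varphi=M^+$ there) bound the first right-hand term by $M^+\mathrm{Per}(\Om)$ in the limit, while the $\Th$-term tends to $M^+\int_\Om\z_\eta\Th\le M^+\int_\Om|\Th|$. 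Using $\na\z_\eta=\eta^{-1}\chi_{\{\r_\Si<\eta\}}\na\r_\Si$ and $|\na\r_\Si|^2=1$ a.e., the middle term becomes $\eta^{-1}\int_{\Om\cap B_\eta(\Si)}(M^+-\varphi)$. Sending then $\eta\to 0$, the $n$-uniformity \eqref{locVolSi} of $\Si$ and a Minkowski content argument bound this by $2\int_{\Si\cap(\overline\Om\setminus\p\overline\Om)}(M^+-\varphi)\,d\mathcal{H}^n$ in the limit (factor $2$ counting both sides of $\Si$ at interior points; $\Si\cap\p\overline\Om$ is excluded because the $\eta$-neighborhood of such boundary points lies only partly in $\Om$). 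Splitting $M^+-\varphi=(M^+-\varphi^+)+\varphi^-$ and using $M^+-\varphi^+\le M^+$ on $\Si\cap(\overline\Om\setminus\p\overline\Om)$ together with $\varphi^-\le M^-$ on $\Si\cap\Om$ (and $\varphi^-=0$ off $\Om$) gives $2M^+\mathcal{H}^n(\Si\cap\overline\Om\setminus\p\overline\Om)+2M^-\mathcal{H}^n(\Si\cap\Om)$. The left-hand side converges to $L\varphi$ by dominated convergence since $\z_\eta\to1$ a.e.\ on $N\setminus\Si$, producing \eqref{OmDvarDr}.

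The principal technical obstacle is the Minkowski content step in the $\eta\to0$ limit: one must justify rigorously that $\limsup_{\eta\to0}\eta^{-1}\int_{\Om\cap B_\eta(\Si)}f\le 2\int_{\Si\cap(\overline\Om\setminus\p\overline\Om)}f\,d\mathcal{H}^n$ for bounded Lipschitz $f\ge0$, using only the density bound \eqref{locVolSi} and the rectifiability of $\Si$. The smooth approximations $\Om_i$ constructed via \eqref{perOmOmi} (with $\mathcal{H}^n(\p\Om_i\cap\Si)=0$ and $\mathrm{Per}(\Om)=\lim\mathcal{H}^n(\p\Om_i)$) are the device that lets one isolate the boundary piece $\Si\cap\p\overline\Om$ and show its contribution is either absorbed into $\mathrm{Per}(\Om)$ or vanishes in the limit.
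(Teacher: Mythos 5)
Your single-test-function scheme is close in spirit to the paper's argument (which also mollifies $\chi_\Om$, cuts off near $\Si$, and invokes the Minkowski-content theorem, Theorem 2.104 of \cite{AFP}), but the step you yourself single out as the principal obstacle is not merely unproved: the localized bound you need there is false as stated, and it is precisely where the constant $2\mathcal{H}^n(\Si\cap\overline\Om\setminus\p\overline\Om)$ in \eqref{OmDvarDr} comes from. Your reason for discarding $\Si\cap\p\overline\Om$ (``the $\eta$-neighborhood of such boundary points lies only partly in $\Om$'') only shows that the local collar factor is less than $2$; it does not make it $0$. Concretely, take $\Om=\{x\in B_1:\ x_{n+1}>0\}$ in the Euclidean model and let $\Si$ contain the flat face $\{x_{n+1}=0\}\cap B_1$: then $\overline\Om\setminus\p\overline\Om=\Om$ is disjoint from $\Si$, so your claimed bound for the middle term is $0$, whereas $\eta^{-1}\int_{\Om\cap B_\eta(\Si)}(M^+-\varphi)\to M^+\mathcal{H}^n(\Si\cap B_1)>0$ (a one-sided collar contributes with factor $1$ per unit $\mathcal{H}^n$, and with thin complementary cusps the factor can be arbitrarily close to $2$). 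What your limits genuinely give, after correcting this, is the weaker inequality with $2\mathcal{H}^n(\Si\cap\overline\Om)$ in place of $2\mathcal{H}^n(\Si\cap\overline\Om\setminus\p\overline\Om)$ (and, because $w_\ep$ need not tend to $\chi_\Om$ a.e.\ on a possibly positive-measure part of $\p\Om$, with $\int_{\overline\Om}|\Th|$ in place of $\int_\Om|\Th|$), which is not the stated lemma and is less convenient downstream, e.g.\ when \eqref{muOm} is specialized to balls in \eqref{muBrzSi}.

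The mechanism that produces the sharper constant in the paper is the interior approximation that your last paragraph only gestures at: one first passes from $\varphi$ to functions supported in the smooth sets $\Om_i$ with $\overline{\Om_i}\subset\overline\Om\setminus\p\overline\Om$, $\mathcal{H}^n(\p\Om_i\cap\Si)=0$ and $\mathcal{H}^n(\p\Om_i)\to\mathrm{Per}(\Om)$, uses the monotonicity \eqref{phi1phi2} against the explicit cutoff of $\Om_i\setminus\overline{B_\ep(\p\Om_i\cup\Si)}$, and only then applies the Minkowski-content theorem, so the boundary collar appears as $\mathcal{H}^n(\p\Om_i)$ (later converging to $\mathrm{Per}(\Om)$) while the $\Si$-collar only sees $\Si\cap\Om_i\subset\Si\cap(\overline\Om\setminus\p\overline\Om)$; the part of $\varphi$ not vanishing near $\Si$ is then treated separately with the cutoff $\tilde{\e}_\ep$, yielding the $2(\sup_{\Si\cap\Om}\varphi^-)\mathcal{H}^n(\Si\cap\Om)$ term. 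In your scheme you send $|\na w_\ep|$ to the perimeter measure before letting $\eta\to0$, so nothing is left to absorb the one-sided collar along $\Si\cap\p\overline\Om$; repairing this forces you back to the $\Om_i$-based bookkeeping, i.e.\ essentially to the paper's proof. A further small point to fix: your test function is supported in $\overline{B_\ep(\Om)}$ and equals $w_\ep\z_\eta M^+$ outside $\Om$, so if $\overline\Om$ reaches $\p B_R(p)$ it is not compactly supported in $B_R(p)\setminus\Si$ and \eqref{DeNSiTh*} does not directly apply; this is harmless only after an additional localization (e.g.\ again via the $\Om_i$).
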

\begin{proof}
We assume $\mathrm{Per}(\Om)<\infty$, or else \eqref{OmDvarDr} holds automatically from \eqref{phi1phi2}.
Let $\Om_i$ be as above. For any $\phi\in \mathrm{Lip}_0(\Om\setminus\Si)$, there is a sequence of functions $\phi_i\in \mathrm{Lip}_0(\Om_i\setminus\Si)$ with $\inf_{\Om}\phi\le\inf_{\Om_i}\phi_i\le\sup_{\Om_i}\phi_i\le \sup_{\Om}\phi$
such that
\begin{equation}\aligned\label{Lvphii}
L\phi=\lim_{i\to\infty}L\phi_i.
\endaligned
\end{equation}
In fact, $\phi_i$ can be chosen as $\e_i\phi$ with the Lipschitz function $\e_i$ satisfying $\e_i=1$ on $\Om_i\setminus B_{\de_i}(\p\Om_i)$, $\e_i=\f1{\de_i}\r_{\Om_i}$ on $\Om_i\cap B_{\de_i}(\p\Om_i)$, and $\e_i=0$ outside $\Om_i$ for some sequence $\de_i\to0$ as $i\to\infty$ so that $\de_i^{-1}\mathcal{H}^{n+1}(\Om_i\cap B_{\de_i}(\p\Om_i))<(1+i^{-1})\mathcal{H}^n(\p\Om_i)$ for each $i$.

Let $\Om_{i,\ep}=\Om_i\setminus \overline{B_\ep(\p\Om_i\cup\Si)}$ for any $\ep>0$.
Let $\e_{i,\ep}=1-\f1\ep\r_{\Om_{i,\ep}}$ on $\Om_s$ and $\e_{i,\ep}=0$ outside $\Om_i$, then $\e_{i,\ep}$ is Lipschitz on $N$ with the Lipschitz constant $1/\ep$.
We can assume $\sup_\Om\phi^+>0$, or else \eqref{OmDvarDr} holds clearly from \eqref{phi1phi2}. Hence, without loss of generality we may assume $\sup_\Om\phi_i^+>0$ for each integer $i\ge1$.
Since $\phi_i\in \mathrm{Lip}_0(\Om_i)$, there is a sequence $\ep_{i,j}>0$ with $\ep_{i,j}\to0$ as $j\to\infty$ so that $\phi_i\le\e_{i,\ep_{i,j}}\sup_{\Om}\phi^+_i\le\e_{i,\ep_{i,j}}\sup_{\Om}\phi^+$.
From \eqref{Lphi}\eqref{phi1phi2}, we have
\begin{equation}\aligned\label{Omphi+***}
\f1{\sup_{\Om}\phi^+}L\phi_i\le& L\e_{s,\ep_{i,j}}=-\f1{\ep_{i,j}}\int_{\Om_i}\lan\na\r_{\Om_{i,\ep_{i,j}}},\na\r_\Si\ran+\int_{\Om_i}\e_{i,\ep_{i,j}}\Th\\
\le&\f1{\ep_{i,j}}\mathcal{H}^{n+1}(\Om_i\setminus \Om_{i,\ep_{i,j}})+\int_{\Om_i}|\Th|.
\endaligned
\end{equation}
With \eqref{locVolSi}, $\mathcal{H}^n(\p\Om_i\cap\Si)=0$ and Theorem 2.104 in \cite{AFP} by Ambrosio-Fusco-Pallara,
letting $j\rightarrow\infty$ in \eqref{Omphi+***} gives
\begin{equation}\aligned\label{Lphisupp}
L\phi_i\le\sup_{\Om}\phi^+\left(\mathcal{H}^n(\p \Om_i)+2\mathcal{H}^n(\Om_i\cap\Si)+\int_{\Om_i}|\Th|\right).
\endaligned
\end{equation}
Combining $\overline{\Om_i}\subset\overline{\Om}\setminus\p\overline{\Om}$ and \eqref{perOmOmi}\eqref{Lvphii}, letting $i\to\infty$ in \eqref{Lphisupp} infers
\begin{equation}\aligned\label{UPLphi}
L\phi\le\sup_{\Om}\phi^+\left(\mathrm{Per}(\Om)+2\mathcal{H}^n\left(\Si\cap\overline{\Om}\setminus\p\overline{\Om}\right)+\int_{\Om}|\Th|\right).
\endaligned
\end{equation}

For any $\ep\in(0,1)$, let $\tilde{\e}_\ep$ be a nonnegative Lipschitz function on $B_{R}(p)$ with $\tilde{\e}_\ep\equiv1$ on
$B_{R}(p)\setminus B_{\ep}(\Si)$, $\tilde{\e}_\ep=\r_\Si/\ep$ on $B_{R}(p)\cap B_{\ep}(\Si)$.
For any $\varphi\in \mathrm{Lip}_0(\Om)$, from \eqref{locVolSi} and the proof of Theorem 2.104 in \cite{AFP}, it follows that
\begin{equation}\aligned\label{AFPSipBRp}
\limsup_{\ep\rightarrow0}\int_{\Om}-\varphi\lan\na\tilde{\e}_\ep,\na\r_\Si\ran\le2\left(\sup_{\Si\cap\Om}\varphi^-\right)\mathcal{H}^n\left(\Si\cap\Om\right).
\endaligned
\end{equation}
With \eqref{UPLphi}, we have
\begin{equation}\aligned\label{varphiuprSi}
L(\tilde{\e}_\ep\varphi)\le&\sup_\Om\varphi^+\left(\mathrm{Per}(\Om)+2\mathcal{H}^n\left(\Si\cap\overline{\Om}\setminus\p\overline{\Om}\right)+\int_{\Om}|\Th|\right).
\endaligned
\end{equation}
Since
\begin{equation}\aligned\label{navarphiee}
\int_{\Om}\tilde{\e}_\ep\lan\na\varphi,\na\r_\Si\ran+\int_{\Om}\tilde{\e}_\ep\varphi\Th=&\int_{\Om}\lan\na(\tilde{\e}_\ep\varphi),\na\r_\Si\ran-\int_{\Om}\varphi\lan\na\tilde{\e}_\ep,\na\r_\Si\ran+\int_{\Om}\tilde{\e}_\ep\varphi\Th\\
=&L(\tilde{\e}_\ep\varphi)-\int_{\Om}\varphi\lan\na\tilde{\e}_\ep,\na\r_\Si\ran,
\endaligned
\end{equation}
letting $\ep\to0$ in \eqref{navarphiee},  with \eqref{AFPSipBRp}\eqref{varphiuprSi} we get
\begin{equation}\aligned\label{Lvarphi+-}
L\varphi\le\sup_\Om\varphi^+\left(\mathrm{Per}(\Om)+2\mathcal{H}^n\left(\Si\cap\overline{\Om}\setminus\p\overline{\Om}\right)+\int_{\Om}|\Th|\right)
+2\left(\sup_{\Si\cap\Om}\varphi^-\right)\mathcal{H}^n\left(\Si\cap\Om\right).
\endaligned
\end{equation}
This completes the proof.
\end{proof}
For an open $U$ in $N$,
let $\mathrm{Lip}_c(U)$ denote the space including all Lipschitz functions on $U$ with compact supports in $U$.
\begin{lemma}\label{RiszeR}
There is a Radon measure $\mu$ on $B_R(p)$ such that
\begin{equation}\aligned
\int_{B_R(p)} fd\mu=\sup_{\phi\in\mathrm{Lip}_c(B_R(p)),\, 0\le\phi\le f}L\phi
\endaligned
\end{equation}
for any nonnegative continuous function $f$ on $B_R(p)$ with compact support in $B_R(p)$.
Moreover, for any open $\Om\subset B_R(p)$
\begin{equation}\aligned\label{muOm}
\mu(\Om)\le\mathrm{Per}(\Om)+2\mathcal{H}^n\left(\Si\cap\overline{\Om}\setminus\p\overline{\Om}\right)+\int_{\Om}|\Th|.
\endaligned
\end{equation}
\end{lemma}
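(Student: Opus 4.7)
The strategy is to apply the Riesz representation theorem to the sublinear functional
$$\Lambda(f) := \sup\bigl\{L\phi : \phi\in\mathrm{Lip}_c(B_R(p)),\ 0\le\phi\le f\bigr\}$$
defined for nonnegative $f\in C_c(B_R(p))$. Monotonicity and positive homogeneity are immediate from the $\R$-linearity of $L$ and the definition. Superadditivity $\Lambda(f_1)+\Lambda(f_2)\le\Lambda(f_1+f_2)$ is also straightforward: for admissible $\phi_i$ (that is, $\phi_i\in\mathrm{Lip}_c$ with $0\le\phi_i\le f_i$), the sum $\phi_1+\phi_2$ is admissible for $f_1+f_2$, so $L(\phi_1)+L(\phi_2)=L(\phi_1+\phi_2)\le\Lambda(f_1+f_2)$, and one passes to the supremum.

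The main obstacle is the reverse inequality, subadditivity. Given any admissible $\phi\le f_1+f_2$, the natural decomposition $\phi_1:=\min(\phi,f_1)$, $\phi_2:=(\phi-f_1)^+$ satisfies $0\le\phi_i\le f_i$ and $\phi_1+\phi_2=\phi$, but the $\phi_i$ belong to $\mathrm{Lip}_c$ only when $f_1$ is itself Lipschitz. I would therefore first establish the linearity of $\Lambda$ on the cone of nonnegative Lipschitz compactly supported functions. The extension to continuous $f$ proceeds by uniform approximation by Lipschitz functions supported in a common compact neighborhood, together with a continuity estimate for $\Lambda$ under uniform convergence. This continuity is provided by the local boundedness furnished by Lemma \ref{Radon}: for any open $\Omega$ with $\overline{\Omega}\subset B_R(p)$ and finite perimeter, and any $\phi\in\mathrm{Lip}_c(\Omega)$ with $0\le\phi\le 1$, the term $\sup_{\Sigma\cap\Omega}\phi^-$ in \eqref{OmDvarDr} vanishes (since $\phi\ge 0$), giving
\begin{equation*}
L\phi\le\mathrm{Per}(\Omega)+2\mathcal{H}^n\bigl(\Sigma\cap\overline{\Omega}\setminus\partial\overline{\Omega}\bigr)+\int_\Omega|\Theta|.
\end{equation*}

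Once $\Lambda$ is a positive linear functional on the nonnegative cone of $C_c(B_R(p))$, extending it by $\Lambda(f):=\Lambda(f^+)-\Lambda(f^-)$ yields a positive linear functional on all of $C_c(B_R(p))$, and the Riesz representation theorem produces the desired Radon measure $\mu$ with $\int f\,d\mu=\Lambda(f)$. The bound \eqref{muOm} then follows from the outer-regularity identity $\mu(\Omega)=\sup\{\int f\,d\mu : f\in C_c(\Omega),\ 0\le f\le 1\}$ combined with the displayed estimate above, since every admissible $\phi$ for such an $f$ lies in $\mathrm{Lip}_c(\Omega)$ with $0\le\phi\le 1$; the case $\mathrm{Per}(\Omega)=\infty$ is trivial.
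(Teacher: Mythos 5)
Your proposal is correct, but the route through the key additivity step is genuinely different from the paper's. You prove exact additivity of $\Lambda$ on the cone of nonnegative compactly supported Lipschitz functions via the algebraic decomposition $\phi=\min(\phi,f_1)+(\phi-f_1)^+$, and then transfer to continuous $f$ by uniform Lipschitz approximation plus a stability estimate for $\Lambda$; the paper instead works with continuous $f_1,f_2$ directly, taking a near-optimal competitor $\phi_0$ for $f_1+f_2$, mollifying the (merely continuous) functions $\phi_0 f_i/(f_1+f_2)$ into smooth $f^*_{i,\de}$ with $0\le f^*_{i,\de}\le f_i$, and controlling $L$ of the remainder $\phi_0-\sum_i f^*_{i,\de}$ (which is small in sup norm but may change sign) by Lemma \ref{Radon} including its $\varphi^-$ term. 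Your version only ever applies Lemma \ref{Radon} to nonnegative test functions and avoids the mollification construction, which is a real simplification; what it costs you is the density step, where ``local boundedness implies continuity under uniform convergence'' needs one more line: if $\|f-g\|_\infty\le\eta$ and $\phi$ is admissible for $f$, then $(\phi-\eta)^+$ is admissible for $g$ and $L\phi-L((\phi-\eta)^+)=L(\min(\phi,\eta))\le\eta\,C_\Omega$ by Lemma \ref{Radon}, for a fixed finite-perimeter open $\Omega\subset B_R(p)$ containing the common compact support (such $\Omega$ exists, e.g.\ a slightly larger geodesic ball, by the coarea formula); this gives $|\Lambda(f)-\Lambda(g)|\le C_\Omega\|f-g\|_\infty$ and with it both the finiteness of $\Lambda$ needed for Riesz and the passage to the limit in the additivity identity. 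For the final bound \eqref{muOm} you invoke the Riesz inner-regularity formula $\mu(\Omega)=\sup\{\Lambda(f):f\in C_c(\Omega),\,0\le f\le1\}$ together with Lemma \ref{Radon}, whereas the paper reaches the same estimate by testing with explicit cutoffs $\xi_s$ on $\Omega'\subset\subset\Omega$ and exhausting $\Omega'\to\Omega$; both are sound, yours being slightly shorter at the price of quoting the regularity property of the Riesz construction.
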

\begin{proof}
For any open $\Om\subset B_R(p)$, let $\mathcal{K}_+(\Om)$ denote the set containing all the nonnegative continuous functions with compact supports in $\Om$.
Let $\widetilde{L}:\, \mathcal{K}_+(\Om)\rightarrow[0,\infty)$ defined by
\begin{equation}\aligned
\widetilde{L}(f)=\sup_{\phi\in \mathrm{Lip}_c(\Om),\, 0\le\phi\le f}L\phi.
\endaligned
\end{equation}
For each $f_1,f_2\in \mathcal{K}_+(\Om)$ and each $\ep>0$, there are functions $\phi_i\in \mathrm{Lip}_c(\Om),\, 0\le\phi_i\le f_i$ for $i=1,2$ such that
\begin{equation}\aligned
\widetilde{L}f_1\le L\phi_1+\ep,\qquad \widetilde{L}f_2\le L\phi_2+\ep.
\endaligned
\end{equation}
Hence
\begin{equation}\aligned\label{wtf1f2ep}
\widetilde{L}f_1+\widetilde{L}f_2\le L\phi_1+L\phi_2+2\ep\le \widetilde{L}(f_1+f_2)+2\ep.
\endaligned
\end{equation}

On the other hand, there is a function $\phi_0\in \mathrm{Lip}_c(\Om)$ with $0\le\phi_0\le f_1+f_2$ such that 
\begin{equation}\aligned\label{wtLf1f2ep}
\widetilde{L}(f_1+f_2)\le L\phi_0+\ep.
\endaligned
\end{equation}
It's easy to check that $\phi_0f_1/(f_1+f_2),\phi_0f_2/(f_1+f_2)$ are continuous functions with supports both in $\mathrm{spt}\phi_0$.
By the construction of mollifiers, for any $\de>0$ with $B_{2\de}(\mathrm{spt}\phi_0)\subset \Om$ there are smooth functions $f_{1,\de},f_{2,\de}\ge0$ with supports in $B_\de(\mathrm{spt}\phi_0)$ such that
\begin{equation}\aligned\label{fidephi0f1f2}
\left|f_{i,\de}-\f{\phi_0f_i}{f_1+f_2}\right|\le \f{\de}2\qquad \mathrm{for}\ i=1,2.
\endaligned
\end{equation}
From $0\le\phi_i\le f_i$ with $i=1,2$ and $0\le\phi_0\le f_1+f_2$, we have
$f_{i,\de}\le f_i+\f{\de}2$.
Let $\phi_\de\in C^\infty(\Om,[0,\de/2])$ satisfy $\phi_\de=\de/2$ on $B_\de(\mathrm{spt}\phi_0)$ and $\phi_\de=0$ on $\Om\setminus B_{3\de/2}(\mathrm{spt}\phi_0)$.
For each $i=1,2$, let $f^*_{i,\de}=\max\{0,f_{i,\de}-\phi_\de\}$, then $f^*_{i,\de}\in \mathrm{Lip}_c(\Om)$.
Since $f_{i,\de}$ has the support in $B_\de(\mathrm{spt}\phi_0)$ for $i=1,2$, it follows that
\begin{equation}\aligned\label{0fide*fi}
0\le f_{i,\de}^*\le f_i\qquad \mathrm{on}\ \Om.
\endaligned
\end{equation}
From $\phi_0\ge0$, $0\le\phi_\de\le\de/2$ and \eqref{fidephi0f1f2}, we have 
\begin{equation}\aligned\label{phi0*}
\left|\sum_{i=1}^2f^*_{i,\de}-\phi_0\right|\le\left|\sum_{i=1}^2(f_{i,\de}-\phi_\de)-\phi_0\right|\le\sum_{i=1}^2\left|f_{i,\de}-\phi_\de-\f{\phi_0f_i}{f_1+f_2}\right|\le \de+\de=2\de.
\endaligned
\end{equation}
Denote $\phi_0^*=\phi_0-\sum_{i=1}^2f^*_{i,\de}$.
With Lemma \ref{Radon} and \eqref{wtLf1f2ep}\eqref{0fide*fi}\eqref{phi0*}, we get
\begin{equation}\aligned\label{wwtf1f2de}
&\widetilde{L}(f_1+f_2)-\ep\le L\phi_0= Lf_{1,\de}^*+Lf_{2,\de}^*+L\phi_0^*\\
\le& \widetilde{L}f_1+\widetilde{L}f_2+2\de\left(\mathrm{Per}(\Om)+2\mathcal{H}^n\left(\Si\cap\Om\setminus\p\Om\right)+\int_{\Om}|\Th|+2\mathcal{H}^n\left(\Si\cap\Om\right)\right).
\endaligned
\end{equation}
Let $\de\rightarrow0$ and $\ep\rightarrow0$ in \eqref{wtf1f2ep}\eqref{wwtf1f2de}, we deduce
\begin{equation}\aligned
\widetilde{L}f_1+\widetilde{L}f_2=\widetilde{L}(f_1+f_2).
\endaligned
\end{equation}
Clearly, $\widetilde{L}(cf)=c\widetilde{L}f\ge0$ for any constant $c\ge0$ and $f\in\mathcal{K}_+(\Om)$. 
From Lemma \ref{Radon} and Riesz representation theorem, there is a Radon measure $\mu$ on $B_R(p)$ such that
\begin{equation}\aligned\label{Lfdmu}
\widetilde{L}f=\int_{B_R(p)} fd\mu\qquad \mathrm{for\ any}\ f\in\mathcal{K}_+(B_R(p)).
\endaligned
\end{equation}

For any open $\Om'\subset\subset\Om$, let $\xi_s=1-\f1s\r_{\Om'}$ on $B_s(\Om')$, and $\xi_s=0$ outside $B_s(\Om')$ for small $s>0$.
We assume $B_s(\Om')\subset\subset\Om$.
Then $\mu(\Om')\le \widetilde{L}\xi_s$.
From \eqref{OmDvarDr}, we get
\begin{equation}\aligned
\mu(\Om')\le\mathrm{Per}(\Om)+2\mathcal{H}^n\left(\Si\cap\overline{\Om}\setminus\p\overline{\Om}\right)+\int_{\Om}|\Th|.
\endaligned
\end{equation}
Forcing $\Om'\to\Om$ in the above inequality implies \eqref{muOm}.
\end{proof}

We define a Radon measure $\mu_*$ by
\begin{equation}\aligned
\mu_*=\mu+2\mathcal{H}^n\llcorner\Si,
\endaligned
\end{equation}
i.e., $\mu_*(K)=\mu(K)+2\mathcal{H}^n(K\cap\Si)$ for any Borel set $K\subset B_R(p)$.
\begin{corollary}\label{|Lvarphi|mu*}
For an open $\Om\subset B_R(p)$, and a function $\varphi\in \mathrm{Lip}_0(\Om)$, we have
\begin{equation}\aligned
|L\varphi|\le\sup_{\Om}|\varphi|\,\mu_*(\Om).
\endaligned
\end{equation}
\end{corollary}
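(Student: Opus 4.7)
The plan is to exploit the linearity of $L$ together with the two bounds we have already established: the definition of $\mu$ in Lemma \ref{RiszeR} (which gives sharp control of $L\phi$ for nonnegative $\phi$) and Lemma \ref{Radon} (whose right-hand side degenerates nicely when the positive part of $\varphi$ vanishes). We split $\varphi=\varphi^+-\varphi^-\in \mathrm{Lip}_0(\Om)$ into its positive and negative parts, noting that $\varphi^+,\varphi^-\in \mathrm{Lip}_0(\Om)$ are both nonnegative and, after extension by zero, continuous with compact support in $B_R(p)$ (the case $\mu_*(\Om)=\infty$ is trivial, so we assume $\overline{\Om}$ is compactly contained in $B_R(p)$; otherwise one truncates near the boundary).

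First, I would bound $L\varphi^+$ using $\mu$. By the defining formula for $\mu$ in Lemma \ref{RiszeR}, taking $f=\varphi^+$ and the admissible test function $\phi=\varphi^+$ itself gives
\begin{equation*}
L\varphi^+\le \int_{B_R(p)}\varphi^+\,d\mu.
\end{equation*}
Since $\varphi^+$ vanishes on $\p\Om$ and outside $\Om$, the integral on the right reduces to one over $\Om$, hence $L\varphi^+\le \sup_{\Om}\varphi^+\cdot \mu(\Om)\le \sup_{\Om}|\varphi|\cdot\mu(\Om)$. Next, I would apply Lemma \ref{Radon} to the nonpositive function $-\varphi^-$, whose positive part is identically zero and whose negative part is $\varphi^-$; the lemma then yields
\begin{equation*}
L(-\varphi^-)\le 2\Bigl(\sup_{\Si\cap\Om}\varphi^-\Bigr)\mathcal H^n(\Si\cap\Om)\le 2\sup_{\Om}|\varphi|\cdot\mathcal H^n(\Si\cap\Om).
\end{equation*}
Adding these two inequalities and using the linearity $L\varphi=L\varphi^+ + L(-\varphi^-)$ gives the upper bound $L\varphi\le\sup_{\Om}|\varphi|\,\mu_*(\Om)$, since $\mu_*(\Om)=\mu(\Om)+2\mathcal H^n(\Si\cap\Om)$ by definition.

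Finally, to obtain the lower bound I would simply replay the same argument with $-\varphi$ in place of $\varphi$, producing $-L\varphi=L(-\varphi)\le \sup_{\Om}|\varphi|\,\mu_*(\Om)$, and combine the two one-sided bounds. The only real point to check is the measure-theoretic step that bounds $L\varphi^+$ via $\mu(\Om)$ (not $\mu(\overline\Om)$); this is handled by the fact that $\varphi^+\equiv 0$ on $\p\Om$, so the integral against $d\mu$ effectively lives on $\Om$. I do not expect a genuine obstacle: once the split $\varphi=\varphi^+-(-(-\varphi^-))$ is made and the correct lemma is applied to each piece, the inequality falls out immediately.
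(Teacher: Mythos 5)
Your argument is correct and essentially reproduces the paper's proof: the same splitting $\varphi=\varphi^+-\varphi^-$, the bound $L\varphi^+\le\sup_\Om\varphi^+\,\mu(\Om)$ via the measure of Lemma \ref{RiszeR}, the bound $L(-\varphi^-)\le 2\sup_{\Si\cap\Om}\varphi^-\,\mathcal{H}^n(\Si\cap\Om)$ from Lemma \ref{Radon}, and the substitution of $-\varphi$ for the lower bound. The only (minor) deviation is that you test with $\varphi^+$ itself in the representation formula for $\mu$, which requires $\mathrm{spt}\,\varphi^+$ to be compactly contained in $B_R(p)$ and hence the truncation you allude to, whereas the paper sidesteps this by approximating $\varphi^+$ by the compactly supported functions $\e_i\varphi^+$ and invoking \eqref{Lvphii}.
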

\begin{proof}
Denote $\varphi=\varphi^+-\varphi^-$.\
Let $\Om_i$ and $\e_i$ be the ones as in the proof of Lemma \ref{Radon}. Put $\varphi_i=\e_i\varphi^+\in\mathrm{Lip}_c(\Om)$. With \eqref{Lvphii} and Lemma \ref{RiszeR}, we get
\begin{equation}\aligned\label{Lv+0}
L\varphi^+=\lim_{i\to\infty}L\varphi_i\le\sup_{\Om}\varphi^+\,\mu(\Om).
\endaligned
\end{equation}
From Lemma \ref{Radon}, it follows that
\begin{equation}\aligned\label{Lv-0}
L(-\varphi^-)\le2\sup_{\Om}\varphi^-\,\mathcal{H}^n(\Om\cap\Si).
\endaligned
\end{equation}
Combining \eqref{Lv+0}\eqref{Lv-0}, we get
\begin{equation}\aligned\label{Lvmu*}
L\varphi=L(\varphi^+-\varphi^-)\le\sup_{\Om}|\varphi|\left(\mu(\Om)+2\mathcal{H}^n(\Om\cap\Si)\right)=\sup_{\Om}|\varphi|\,\mu_*(\Om).
\endaligned
\end{equation}
By considering $-\varphi$ instead of $\varphi$, we complete the proof.
\end{proof}
\begin{remark}
In Lemma \ref{Radon}, we have obtained the estimate of $L\varphi$, but the right hand of \eqref{OmDvarDr} is not necessarily non-decreasing as $\Om$ increases because of the term $\mathrm{Per}(\Om)$. However, the upper bound $\mu_*(\Om)$ does not decrease as $\Om$ increases.
\end{remark}

\begin{proposition}
For any ball $B_r(z)\subset B_R(p)$,
\begin{equation}\aligned\label{muBrzSi}
\mu_*(B_r(z))\le \f{(n+1)\k}{\tanh(\k r)}\mathcal{H}^{n+1}(B_r(z))+4\mathcal{H}^{n}(B_r(z)\cap \Si)+\int_{B_r(z)}|\Th|.
\endaligned
\end{equation}
\end{proposition}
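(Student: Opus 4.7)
The plan is to combine Lemma \ref{RiszeR} with a standard Laplacian comparison for the distance function from the point $z$. Applying \eqref{muOm} with $\Om = B_r(z)$, and noting that $\overline{B_r(z)}\setminus\p\overline{B_r(z)} = B_r(z)$ in the smooth Riemannian setting, gives
$$\mu(B_r(z)) \le \mathrm{Per}(B_r(z)) + 2\mathcal{H}^n(\Si\cap B_r(z)) + \int_{B_r(z)}|\Th|.$$
Adding the extra $2\mathcal{H}^n(\Si\cap B_r(z))$ coming from $\mu_*=\mu+2\mathcal{H}^n\llcorner\Si$ produces the $4\mathcal{H}^n(\Si\cap B_r(z))$ and $\int|\Th|$ terms on the right of \eqref{muBrzSi}, so the only remaining task is to show
$$\mathrm{Per}(B_r(z)) \le \frac{(n+1)\k}{\tanh(\k r)}\mathcal{H}^{n+1}(B_r(z)).$$

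For this perimeter bound I would test the divergence theorem with the vector field $X=\tanh(\k s)\na s$, where $s(x):=d(z,x)$. Using $|\na s|=1$ a.e., a direct calculation gives
$$\mathrm{div}_N X = \k\,\mathrm{sech}^2(\k s) + \tanh(\k s)\De_N s.$$
The Bishop-type Laplacian comparison for $\mathrm{Ric}_N\ge -n\k^2$ gives $\De_N s\le n\k\coth(\k s)$, hence
$$\mathrm{div}_N X \le \k\,\mathrm{sech}^2(\k s) + n\k \le (n+1)\k.$$
Integrating over $B_r(z)\setminus\overline{B_\ep(z)}$, noting that the outward unit normal to $B_r(z)$ is $\na s$ so that $X\cdot\nu=\tanh(\k r)$ on $\p B_r(z)$, and letting $\ep\to 0$ (the inner boundary contribution vanishes because $\tanh(\k\ep)\mathcal{H}^n(\p B_\ep(z))=O(\ep^{n+1})$), one obtains
$$\tanh(\k r)\,\mathcal{H}^n(\p B_r(z)) \le (n+1)\k\,\mathcal{H}^{n+1}(B_r(z)).$$
Since $\mathrm{Per}(B_r(z))\le \mathcal{H}^n(\p B_r(z))$ by a standard property of Caccioppoli sets, the required perimeter estimate follows after dividing by $\tanh(\k r)$.

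The main technical point is the cut locus of $z$, where $s$ fails to be smooth and $X$ is only Lipschitz. I would handle this in the usual way by interpreting the Laplacian comparison distributionally: the cut locus contributes a non-positive singular measure to $\De_N s$, so the pointwise-a.e.~inequality $\mathrm{div}_N X\le (n+1)\k$ still integrates correctly on $B_r(z)\setminus\{z\}$. Equivalently, one can approximate $s$ from below by smooth functions having the same Laplacian bound away from the cut locus, or approximate $B_r(z)$ from inside by smooth subdomains whose boundaries avoid the cut locus, and then pass to the limit.
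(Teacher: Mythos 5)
Your reduction is the same as the paper's: both apply \eqref{muOm} with $\Om=B_r(z)$, add the extra $2\mathcal{H}^n\llcorner\Si$ coming from the definition of $\mu_*$, and are left with proving the boundary-area bound $\mathcal{H}^n(\p B_r(z))\le\f{(n+1)\k}{\tanh(\k r)}\mathcal{H}^{n+1}(B_r(z))$ (the paper, like you, tacitly uses $\mathrm{Per}(B_r(z))\le\mathcal{H}^n(\p B_r(z))$ and identifies $\overline{B_r(z)}\setminus\p\overline{B_r(z)}$ with $B_r(z)$ up to the metric sphere). Where you genuinely differ is in how that bound is obtained: the paper quotes the Bishop--Gromov area-to-volume comparison, $\mathcal{H}^n(\p B_r(z))\le\f{\sinh^n(\k r)}{\int_0^r\sinh^n(\k t)dt}\mathcal{H}^{n+1}(B_r(z))$, and then verifies the elementary integral inequality $(n+1)\k\int_0^r\sinh^n(\k t)dt\ge\sinh^n(\k r)\tanh(\k r)$ by integration by parts; you instead produce the constant $(n+1)\k$ directly from the pointwise bound $\mathrm{div}_N(\tanh(\k s)\na s)\le\k\,\mathrm{sech}^2(\k s)+n\k\le(n+1)\k$ via the Laplacian comparison theorem and then integrate. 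Your route is correct and arguably more self-contained, since the model computation is absorbed into the vector field, but it pays for this with the cut-locus bookkeeping you flag at the end: note that for special radii the entire sphere $\p B_r(z)$ may lie in the cut locus (e.g. $r=\pi/2$ for a point in the round $\R\mathrm{P}^{n+1}$), so the statement ``the outward normal to $B_r(z)$ is $\na s$ on $\p B_r(z)$'' is not literally available there; the clean way to run your argument is with Lipschitz cutoffs, proving $\f1\de\int_{B_r(z)\setminus B_{r-\de}(z)}\tanh(\k s)\le(n+1)\k\,\mathcal{H}^{n+1}(B_r(z))$ and then $\liminf_{t\to r^-}\mathcal{H}^n(\p B_t(z))\ge\mathcal{H}^n(\p B_r(z))$, the latter being checked in polar coordinates over the segment domain --- which is essentially the computation already packaged inside Bishop--Gromov. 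That is what the paper's citation buys: no cut-locus discussion at all; what your version buys is an explicit, one-line comparison mechanism in place of the paper's separate integral inequality.
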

\begin{proof}
For any $r>0$, with integrating by parts we have
\begin{equation}\aligned
&n\k\int_0^r\sinh^n(\k t)dt=\int_0^r\tanh(\k t)d\sinh^n(\k t)\\
=&\sinh^n(\k r)\tanh(\k r)-\k\int_0^r\sinh^n(\k t)\left(1-\tanh^2(\k t)\right)dt,
\endaligned
\end{equation}
which implies
\begin{equation}\aligned
(n+1)\k\int_0^r\sinh^n(\k t)dt\ge\sinh^n(\k r)\tanh(\k r).
\endaligned
\end{equation}
With Bishop-Gromov volume comparison, we have
\begin{equation}\aligned\label{HnpBrz}
\mathcal{H}^{n}(\p B_r(z))\le \f{\sinh^n(\k r)}{\int_0^r\sinh^n(\k t)dt}\mathcal{H}^{n+1}(B_r(z))\le \f{(n+1)\k}{\tanh(\k r)}\mathcal{H}^{n+1}(B_r(z)).
\endaligned
\end{equation}
Combining with \eqref{muOm}\eqref{HnpBrz} and the definition of $\mu_*$, we immediately get \eqref{muBrzSi}.
\end{proof}

\begin{remark}
Let $S$ be a countably $n$-rectifiable set in $N$ with $\mathcal{H}^n(S)<\infty$, and there are a Radon measure $\mathfrak{m}$ on $N$ and constants $\g,\g'>0$ such that $\mathfrak{m}$ is absolutely continuous with respect to $\mathcal{H}^n$ and
\begin{equation}\aligned\label{mmgrn}
\mathfrak{m}(B_r(x))\ge\g r^n \ for\ any\ x\in S\cap B_R(p),\ and\ any\ r\in(0,\g').
\endaligned
\end{equation}
We suppose that there is a $L^1$-function $f$ on $B_R(p)$ such that $\De_N\r_S\le f$ on $B_R(p)\setminus S$
in the distribution sense. Then our argument in $\S 3$ and $\S 4$ still works provided $\Si$ is replaced by $S$ here.
Without the condition \eqref{mmgrn}, our argument in $\S 3$ and $\S 4$ also works under a more careful covering for $S$, and for any open $\Om\subset B_R(p)$ the coefficient of $\mathcal{H}^n(S\cap\Om)$ will be replaced by a constant depending only on $n,\k$. 
\end{remark}

\section{An angle estimate for distance functions from hypersurfaces}

Let $N$ be the $(n+1)$-dimensional manifold, and $\Si$ be the subset in $N$ defined at the beginning of $\S 3$.
Now let us approach $\r_\Si$ by harmonic functions analog to \cite{C}.
We define a Radon measure $\mu^*$ on $B_R(p)$ by letting
\begin{equation}\aligned\label{DEFmu*}
\mu^*(K)=\mu_*(K)+\int_K|\Th|
\endaligned
\end{equation}
for any Borel set $K\subset B_R(p)$.
\begin{lemma}\label{nab-bz|}
For any $B_{2r}(z)\subset B_R(p)$, there is a harmonic function $\mathbf{b}$ on $B_{2r}(z)$ with $|\mathbf{b}|\le 2r$ on $B_{2r}(z)$ such that
\begin{equation}\aligned
\int_{B_{r}(z)}\left|\na\mathbf{b}-\na \r_\Si\right|^2\le4r\mu^*(B_{2r}(z)).
\endaligned
\end{equation}
\end{lemma}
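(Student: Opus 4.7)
\medskip

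\noindent\textbf{Proof proposal.} The plan is to follow Colding's harmonic‐replacement strategy, taking $\mathbf{b}$ to be the harmonic function on $B_{2r}(z)$ sharing its boundary trace with a centered version of $\r_\Si$. Since $\r_\Si$ is $1$-Lipschitz on $N$, its oscillation on $\overline{B_{2r}(z)}$ is at most $4r$; I would pick a constant $c$ as the midpoint of this range so that $|\r_\Si-c|\le 2r$ on $\overline{B_{2r}(z)}$, and then let $\mathbf{b}$ solve the Dirichlet problem $\Delta_N\mathbf{b}=0$ on $B_{2r}(z)$ with $\mathbf{b}-(\r_\Si-c)\in H^{1,2}_0(B_{2r}(z))$. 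The maximum principle then gives $|\mathbf{b}|\le 2r$. Set $\varphi=\r_\Si-c-\mathbf{b}$, which has zero trace on $\partial B_{2r}(z)$ and $|\varphi|\le 4r$.

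The next step is the standard energy identity. Since $\mathbf{b}$ is the harmonic competitor of $\r_\Si-c$, $\int\langle\na\mathbf{b},\na\varphi\rangle=0$ by testing against $\varphi\in H^{1,2}_0$; using $|\na\r_\Si|=1$ a.e.\ and expanding the square, a short computation yields
\begin{equation*}
\int_{B_{2r}(z)}|\na\r_\Si-\na\mathbf{b}|^2=\int_{B_{2r}(z)}\langle\na\r_\Si,\na\varphi\rangle.
\end{equation*}
It remains to show the right-hand side is bounded by $4r\,\mu^*(B_{2r}(z))$. From the definition \eqref{Lphi} of $L$ and the definition \eqref{DEFmu*} of $\mu^*$,
\begin{equation*}
\int_{B_{2r}(z)}\langle\na\r_\Si,\na\varphi\rangle=L\varphi-\int_{B_{2r}(z)}\varphi\,\Th\le|L\varphi|+\sup|\varphi|\int_{B_{2r}(z)}|\Th|,
\end{equation*}
and Corollary \ref{|Lvarphi|mu*} gives $|L\varphi|\le \sup|\varphi|\,\mu_*(B_{2r}(z))\le 4r\,\mu_*(B_{2r}(z))$. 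Adding the $|\Th|$ piece and invoking \eqref{DEFmu*} produces the desired estimate, and restricting integration from $B_{2r}(z)$ to $B_r(z)$ only strengthens the inequality.

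The one technical issue is the regularity needed for Corollary \ref{|Lvarphi|mu*}: it is phrased for $\varphi\in\mathrm{Lip}_0(B_{2r}(z))$, whereas our $\varphi$ is only a priori in $H^{1,2}_0$. The cleanest way around this is by density: since the boundary values $\r_\Si-c$ of $\mathbf{b}$ are $1$-Lipschitz and $\partial B_{2r}(z)$ is a smooth geodesic sphere (we may assume $2r$ is small enough that this is true; otherwise cover $B_{2r}(z)$ by smaller balls), standard boundary Schauder/barrier estimates for the Laplace--Beltrami equation give $\mathbf{b}\in\mathrm{Lip}(\overline{B_{2r}(z)})$, hence $\varphi\in\mathrm{Lip}_0(B_{2r}(z))$ directly. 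Alternatively, one approximates $\varphi$ by $\eta_s\varphi$ with $\eta_s$ a Lipschitz cutoff vanishing in an $s$-neighborhood of $\partial B_{2r}(z)$ and passes to the limit, using that $L$ extends continuously to $H^{1,2}_0$ via the $\mu_*$ bound. This regularity step, and the verification that the hypotheses of Corollary \ref{|Lvarphi|mu*} do propagate to the harmonic replacement, is the only real subtlety; all remaining ingredients are the bookkeeping spelled out above.
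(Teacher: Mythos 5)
Your overall strategy (harmonic replacement of a centered copy of $\r_\Si$, the energy identity from orthogonality, and then the bound $|L\varphi|\le\sup|\varphi|\,\mu_*$ plus the $\Th$-term to reach $4r\mu^*(B_{2r}(z))$) is the same mechanism the paper uses, and your arithmetic matches. The genuine gap is exactly the step you flag at the end, and neither of your proposed fixes works as stated. First, you cannot "assume $2r$ is small enough that $\p B_{2r}(z)$ is a smooth geodesic sphere": the ball $B_{2r}(z)\subset B_R(p)$ is given, only a Ricci lower bound is assumed, and there is no injectivity-radius control (indeed in the later applications these estimates must be uniform along a Gromov--Hausdorff convergent, possibly collapsing, sequence); and "cover $B_{2r}(z)$ by smaller balls" does not produce a single harmonic function on $B_{2r}(z)$ satisfying the stated inequality. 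Second, even on a smooth domain, the harmonic extension of merely Lipschitz boundary data is in general \emph{not} Lipschitz up to the boundary (Poisson extensions of Lipschitz data are only Zygmund/$C^{0,\a}$), so the conclusion $\varphi\in\mathrm{Lip}_0(B_{2r}(z))$ needed to invoke Corollary \ref{|Lvarphi|mu*} directly is unjustified. Third, the fallback via $\eta_s\varphi$ hinges on the boundary-layer term $\f1s\int_{\{d(\cdot,\p B_{2r}(z))<s\}}|\varphi|\to0$, which requires continuity of $\varphi$ up to $\p B_{2r}(z)$ with zero boundary value (i.e.\ Dirichlet regularity of every boundary point of a general geodesic ball) or a Hardy-type inequality; saying "$L$ extends continuously to $H^{1,2}_0$ via the $\mu_*$ bound" is circular, since the extension of the $\sup|\varphi|\,\mu_*$ bound from $\mathrm{Lip}_0$ to $H^{1,2}_0\cap L^\infty$ is precisely what needs proof. (It can in fact be proved cleanly: approximate $\varphi$ in $H^{1,2}$ by $\mathrm{Lip}_c(B_{2r}(z))$ functions, truncate at level $\sup|\varphi|$, and use $W^{1,1}$-continuity of $L$; with that substitute your argument closes, and even gives the estimate on the full ball $B_{2r}(z)$.)

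For comparison, the paper avoids all boundary issues by construction: it mollifies the data ($b_{z,\ep}$), solves the Dirichlet problem on smooth approximating subdomains $U_\ep\subset B_{2r}(z)$ so that $\mathbf{b}_\ep\in C^\infty(\overline{U_\ep})$ by Schauder theory, performs the integration by parts with an explicit cutoff $\e_\tau$ near $\p U_\ep$ (the boundary layer vanishes because $\mathbf{b}_\ep-b_{z,\ep}$ is continuous up to $\p U_\ep$ and vanishes there), and then lets $\ep\to0$ using only interior convergence of $\na\mathbf{b}_\ep$. This is why the lemma asserts the estimate only on the half-scale ball $B_r(z)$ while the right-hand side involves $\mu^*(B_{2r}(z))$; your claim on the full ball would require the extra approximation argument indicated above, which your write-up does not supply.
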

\begin{proof}
We fix a point $z\in B_R(p)$, and define a Lipschitz function
$$b_z=\r_\Si-\r_\Si(z)\qquad \mathrm{on}\ \ B_{R+R'}(p).$$
Then  for each $x\in B_{R+R'}(p)$
\begin{equation}\aligned\label{bzxdxz}
|b_z(x)|=|\r_\Si(x)-\r_\Si(z)|\le d(x,z).
\endaligned
\end{equation}
For any $\ep\in(0,1]$, let $b_{z,\ep}$ be a mollifier of $b_z$ defined by
$$b_{z,\ep}(x)=(b_z*\si_\ep)(x)=\int_{y\in N} b_z(y)\si_\ep(x,y),$$
where $\si_\ep$ is the function defined in \eqref{wep}.
Since $\si_\ep$ is smooth for small $\ep>0$, it follows that $b_{z,\ep}\in C^\infty$ for small $\ep>0$.
Then for any $U\subset B_R(p)$ we have
\begin{equation}\aligned\label{nabznabzep}
\lim_{\ep\to0}\int_{U}\left|\na b_z-\na b_{z,\ep}\right|^2=0.
\endaligned
\end{equation}
We fix a ball $B_{2r}(z)\subset B_R(p)$. For every small $\ep>0$, 
let $U_\ep$ be an open set in $B_{2r}(z)$ with smooth $\p U_\ep$ such that $U_\ep$ converges to $B_{2r}(z)$ as $\ep\to0$.
Let $\mathbf{b}_\ep$ denote the harmonic function on $U_\ep$ with $\mathbf{b}_\ep=b_{z,\ep}$ on $\p U_\ep$.
From Schauder theory of elliptic equations, $\mathbf{b}_\ep\in C^\infty(\overline{U_\ep})$.
From the maximum principle, we have
$$\sup_{U_\ep}|\mathbf{b}_\ep|=\sup_{\p U_\ep}|\mathbf{b}_\ep|=\sup_{\p U_\ep}|b_{z,\ep}|.$$
Since $b_{z,\ep}\to b_z$ uniformly, $|b_z|\le d(\cdot,z)$ on $B_{2r}(p)$, and $U_\ep\subset B_{2r}(z)$, then there is a function $\psi_\ep>0$ with $\lim_{\ep\to0}\psi_\ep=0$ such that
\begin{equation}\aligned\label{supbepbzep}
\sup_{U_\ep}\left|\mathbf{b}_\ep-b_{z,\ep}\right|\le\sup_{U_\ep}|\mathbf{b}_\ep|+\sup_{U_\ep}|b_{z,\ep}|\le 4r+\psi_\ep.
\endaligned
\end{equation}
Let $U_{\ep,\tau}=\{x\in U_\ep|\r_{\p U_\ep}(x)>\tau\}$
for any small $\tau>0$. Let $\e_\tau=1$ on $U_{\ep,\tau}$, $\e_\tau=1-\f1\tau\r_{\p U_\ep}$ on $U_\ep\setminus U_{\ep,\tau}$, and $\e_\tau=0$ outside $U_\ep$.
For any $\de\in(0,1]$, by the definition of $L$ in \eqref{Lphi} one has
\begin{equation}\aligned
&\int_{U_\ep}\left|\na\mathbf{b}_\ep-\na b_{z,\ep}\right|^2=-\int_{U_\ep}\lan\na(\mathbf{b}_\ep-b_{z,\ep}),\na b_{z,\ep}\ran+\int_{U_\ep}\mathrm{div}_N\left((\mathbf{b}_\ep-b_{z,\ep})\na\mathbf{b}_\ep\right)\\
=&-\int_{U_\ep}\lan\na(\mathbf{b}_\ep-b_{z,\ep}),\na b_{z,\ep}\ran=-\int_{U_\ep}\lan\na(\mathbf{b}_\ep-b_{z,\ep}),\na b_z\ran+\int_{U_\ep}\lan\na(\mathbf{b}_\ep-b_{z,\ep}),\na(b_z-b_{z,\ep})\ran\\
\le&-\int_{U_\ep}\lan\na(\mathbf{b}_\ep-b_{z,\ep}),\na b_z\ran+\de\int_{U_\ep}|\na(\mathbf{b}_\ep-b_{z,\ep})|^2+\f1{\de}\int_{U_\ep}|\na(b_z-b_{z,\ep})|^2,
\endaligned
\end{equation}
which implies
\begin{equation}\aligned\label{Uepbepbzep}
&(1-\de)\int_{U_\ep}\left|\na\mathbf{b}_\ep-\na b_{z,\ep}\right|^2\le-\int_{U_\ep}\lan\na(\mathbf{b}_\ep-b_{z,\ep}),\na b_z\ran+\f1{\de}\int_{U_\ep}|\na(b_z-b_{z,\ep})|^2.
\endaligned
\end{equation}
By the definition of the operator $L$ in \eqref{Lphi}, 
\begin{equation}\aligned\label{Uepbepbzep'}
&-\int_{U_\ep}\e_\tau\lan\na(\mathbf{b}_\ep-b_{z,\ep}),\na b_z\ran\\
=&-\int_{U_\ep}\lan\na(\e_\tau(\mathbf{b}_\ep-b_{z,\ep})),\na b_z\ran+\int_{U_\ep}(\mathbf{b}_\ep-b_{z,\ep})\lan\na\e_\tau,\na b_z\ran\\
\le& \int_{U_\ep}\e_\tau(\mathbf{b}_\ep-b_{z,\ep})\Th-L(\e_\tau(\mathbf{b}_\ep-b_{z,\ep}))+\f1{\tau}\int_{U_\ep\setminus U_{\ep,\tau}}\left|\mathbf{b}_\ep-b_{z,\ep}\right|.
\endaligned
\end{equation}
From Corollary \ref{|Lvarphi|mu*} and \eqref{supbepbzep},
\begin{equation}\aligned\label{Letbepbzep}
-L(\e_\tau(\mathbf{b}_\ep-b_{z,\ep}))\le\sup_{U_\ep}\left|\mathbf{b}_\ep-b_{z,\ep}\right|\mu_*(U_\ep)\le(4r+\psi_\ep)\mu_*(U_\ep).
\endaligned
\end{equation}
Since $\mathbf{b}_\ep\in C^\infty(\overline{U_\ep})$ and $\mathbf{b}_\ep=b_{z,\ep}$ on $\p U_\ep$, then 
\begin{equation}\aligned
\lim_{\tau\to0}\sup_{U_\ep\setminus U_{\ep,\tau}}\left|\mathbf{b}_\ep-b_{z,\ep}\right|=0,
\endaligned
\end{equation}
and
\begin{equation}\aligned\label{UepUeptbepbzep}
\lim_{\tau\to0}\f1{\tau}\int_{U_\ep\setminus U_{\ep,\tau}}\left|\mathbf{b}_\ep-b_{z,\ep}\right|
\le\lim_{\tau\to0}\sup_{U_\ep\setminus U_{\ep,\tau}}\left|\mathbf{b}_\ep-b_{z,\ep}\right|\f{\mathcal{H}^{n+1}\left(U_\ep\setminus U_{\ep,\tau}\right)}{\tau}=0.
\endaligned
\end{equation}
Letting $\tau\to0$ in \eqref{Uepbepbzep'}\eqref{Letbepbzep}, combining with \eqref{supbepbzep}\eqref{UepUeptbepbzep} we have
\begin{equation}\aligned
-\int_{U_\ep}\lan\na(\mathbf{b}_\ep-b_{z,\ep}),\na b_z\ran\le& (4r+\psi_\ep)\int_{U_\ep}|\Th|+(4r+\psi_\ep)\mu_*(U_\ep).
\endaligned
\end{equation}
Combining \eqref{Uepbepbzep}, we get
\begin{equation}\aligned\label{Brznambepbzep}
(1-\de)\int_{U_\ep}\left|\na\mathbf{b}_\ep-\na b_{z,\ep}\right|^2\le& (4r+\psi_\ep)\int_{U_\ep}|\Th|+(4r+\psi_\ep)\mu_*(U_\ep)+\f1{\de}\int_{U_\ep}|\na(b_z-b_{z,\ep})|^2.
\endaligned
\end{equation}
By the maximum principle and the definition of $\mathbf{b}_{\ep}$,
there is a sequence $\ep_i\to0$ such that $\mathbf{b}_{\ep_i}$ converges to a harmonic function $\mathbf{b}$ on $B_{2r}(z)$ with $|\mathbf{b}|\le 2r$ on $B_{2r}(z)$. In particular, $\mathbf{b}_{\ep_i}$ converges to $\mathbf{b}$ uniformly on any compact subset of $B_{2r}(z)$. 
Hence $\left|\na\mathbf{b}_{\ep_i}-\na\mathbf{b}\right|\to0$ uniformly on $\overline{B_{r}(z)}$. From \eqref{nabznabzep} and \eqref{Brznambepbzep} with $\ep=\ep_i$,  letting $\ep_i\to0$ we get
\begin{equation}\aligned
(1-\de)\int_{B_{r}(z)}\left|\na\mathbf{b}-\na b_z\right|^2\le& 4r\int_{B_{2r}(z)}|\Th|+4r\mu_*(B_{2r}(z)).
\endaligned
\end{equation}
By the definition of $\mu^*$ in \eqref{DEFmu*}, we complete the proof by letting $\de\to0$ in the above inequality.
\end{proof}
From the Cheng-Yau gradient estimate \cite{CY} and \eqref{bzxdxz}, for the harmonic function $\mathbf{b}$ in Lemma \ref{nab-bz|}
\begin{equation}\aligned
\sup_{B_{3r/2}(z)}\left|\na \mathbf{b}\right|\le \f{c_{n,\k r}}{2r}\sup_{B_{2r}(z)}|\mathbf{b}|\le c_{n,\k r},
\endaligned
\end{equation}
where $c_{n,\k r}$ is a general constant depending on $n,\k r$.
Let 'Hess' denote the Hessian matrix for $C^2$-functions on $N$.
Combining the Bochner formula
\begin{equation}\aligned
\f12\De_N\left|\na \mathbf{b}\right|^2=\left|\mathrm{Hess}_{\mathbf{b}}\right|^2+\mathrm{Ric}_N(\na \mathbf{b},\na \mathbf{b})
\ge\left|\mathrm{Hess}_{\mathbf{b}}\right|^2-n\k^2\left|\na \mathbf{b}\right|^2,
\endaligned
\end{equation}
we get the following Hessian estimate (see Lemma 1.12 in \cite{C})
\begin{equation}\aligned\label{Hessb}
\int_{B_r(z)}\left|\mathrm{Hess}_{\mathbf{b}}\right|^2\le\f{c_{n,\k r}}{r^2}\mathcal{H}^{n+1}(B_r(z)).
\endaligned
\end{equation}

Let $\La_r>0$ be a constant (depending on $r$) such that
\begin{equation}\aligned\label{SiLa}
4\mathcal{H}^n(B_{2r}(z)\cap\Si)+2\int_{B_{2r}(z)}|\Th|\le \f{\La_r}r\mathcal{H}^{n+1}(B_{2r}(z)).
\endaligned
\end{equation}
From \eqref{muBrzSi}\eqref{SiLa}, we have
\begin{equation}\aligned\label{ThmuLaCONT}
\mu^*(B_{2r}(z))=\int_{B_{2r}(z)}|\Th|+\mu_*(B_{2r}(z))\le \left(\f{c_{n,\k r}}{r}+\f{\La_r}r\right)\mathcal{H}^{n+1}(B_{2r}(z)).
\endaligned
\end{equation}
Inspired by Lemma 1.23 in \cite{C}, we can deduce the following estimates.
\begin{lemma}\label{Decomb}
For any $\de>0$, there is a constant $\th_*=\th_{n,\k r,\La_r,\de}$ depending on $n,\k r,\La_r,\de$ such that
for each $0<\th\le\th_*$ and $B_{4r}(z)\subset B_R(p)$, there exist finitely many balls $B_{\th r}(p_i)\subset B_r(z)$ for $i=1,\cdots,N_*$, and harmonic functions $\mathbf{b}_i$ satisfying
\begin{equation}\aligned\label{Bthrpi}
\mathcal{H}^{n+1}\left(\cup_{1\le i\le N_*}B_{\th r}(p_i)\right)\ge(1-\de)\mathcal{H}^{n+1}\left(B_{r}(z)\right),
\endaligned
\end{equation}
\begin{equation}\aligned
\fint_{B_{2\th r}(p_i)}\left|\na\mathbf{b}_i-\na \r_\Si\right|^2\le \de,
\endaligned
\end{equation}
and
\begin{equation}\aligned
\fint_{B_{2\th r}(p_i)}\left|\mathrm{Hess}_{\mathbf{b}_i}\right|^2\le \f{\de}{\th^2r^2}.
\endaligned
\end{equation}
\end{lemma}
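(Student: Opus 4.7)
The plan is a Vitali-type covering argument at scale $\th r$, combined with Markov's inequality to extract good balls from an averaged gradient bound, and a refined Bochner identity to convert the resulting gradient closeness into Hessian smallness. Fix $\de>0$ and let $\th\in(0,1/8)$ be small (to be fixed at the end). By a standard Vitali selection, pick $p_1,\dots,p_N\in B_{(1-\th)r}(z)$ so that $\{B_{\th r/5}(p_i)\}$ is pairwise disjoint, $\bigcup_i B_{\th r}(p_i)\supset B_{(1-\th)r}(z)$, and each $B_{4\th r}(p_i)\subset B_{4r}(z)\subset B_R(p)$. Bishop--Gromov gives the dilated family $\{B_{4\th r}(p_i)\}$ bounded overlap $K=K(n,\k r)$, and once $\th$ is small in $\de$ the outer shell $B_r(z)\setminus B_{(1-\th)r}(z)$ carries volume at most $\de\,\mathcal{H}^{n+1}(B_r(z))/4$. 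For each $i$, I apply Lemma~\ref{nab-bz|} on $B_{4\th r}(p_i)$ (with the radius parameter taken as $2\th r$) to get a harmonic function $\mathbf{b}_i$ on $B_{4\th r}(p_i)$ with $|\mathbf{b}_i|\le 4\th r$ and
\begin{equation*}
\int_{B_{2\th r}(p_i)}|\na\mathbf{b}_i-\na\r_\Si|^2\le 8\th r\,\mu^*(B_{4\th r}(p_i)).
\end{equation*}

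Summing these estimates over $i$, using bounded overlap, \eqref{ThmuLaCONT}, and volume doubling, I obtain $\sum_i\int_{B_{2\th r}(p_i)}|\na\mathbf{b}_i-\na\r_\Si|^2\le C_1\th\,\mathcal{H}^{n+1}(B_r(z))$ with $C_1=C_1(n,\k r,\La_r)$; the crucial factor of $\th$ arises from cancelling the $1/r$ in \eqref{ThmuLaCONT} against the $\th r$ in the gradient bound. Introducing an auxiliary threshold $\de_1=\de^2/(4C_2^2)$ (with $C_2$ produced momentarily), Markov's inequality forces the bad set $I_{\rm bad}=\{i:\fint_{B_{2\th r}(p_i)}|\na\mathbf{b}_i-\na\r_\Si|^2>\de_1\}$ to satisfy $\sum_{i\in I_{\rm bad}}\mathcal{H}^{n+1}(B_{\th r}(p_i))\le C_1\th\mathcal{H}^{n+1}(B_r(z))/\de_1$, which is at most $\de\mathcal{H}^{n+1}(B_r(z))/4$ provided $\th\le\de\de_1/(4C_1 K_*)$ for a Bishop--Gromov constant $K_*$. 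For each $i\notin I_{\rm bad}$, Colding's refined Bochner identity then produces the Hessian bound: with a cutoff $\phi\ge0$ of compact support in $B_{4\th r}(p_i)$, equal to $1$ on $B_{2\th r}(p_i)$, and $|\De_N\phi|\le C/(\th r)^2$, the Bochner formula for harmonic $\mathbf{b}_i$, integration by parts, and the trick of subtracting the constant $|\na\r_\Si|^2=1$ under $\int\De_N\phi\,(\cdot)$ give
\begin{equation*}
\int\phi|\mathrm{Hess}_{\mathbf{b}_i}|^2\le\f12\int\De_N\phi\cdot\bigl(|\na\mathbf{b}_i|^2-1\bigr)+n\k^2\int\phi|\na\mathbf{b}_i|^2.
\end{equation*}
Applying Cauchy--Schwarz to $\bigl||\na\mathbf{b}_i|^2-1\bigr|=|\na\mathbf{b}_i-\na\r_\Si|\cdot|\na\mathbf{b}_i+\na\r_\Si|$ and using the Cheng--Yau bound $|\na\mathbf{b}_i|\le c_{n,\k r}$ yields $\fint_{B_{2\th r}(p_i)}|\mathrm{Hess}_{\mathbf{b}_i}|^2\le C_2\sqrt{\de_1}/(\th^2 r^2)+C_3\k^2$ with $C_2,C_3$ depending on $n,\k r,\La_r$. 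The choice $\de_1=\de^2/(4C_2^2)$ bounds the first term by $\de/(2\th^2 r^2)$, and requiring $\th^2\le\de/(2C_3\k^2 r^2)$ (vacuous when $\k=0$) bounds the second, so $\th_*$ may be taken as the minimum of all these thresholds; the good balls then cover $\ge(1-\de)\mathcal{H}^{n+1}(B_r(z))$.

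The key conceptual obstacle is the Hessian estimate: the universal bound \eqref{Hessb} gives $\fint|\mathrm{Hess}_{\mathbf{b}_i}|^2\le C/(\th r)^2$ with a constant that does not shrink in $\de$, so applying it ball by ball cannot give the required $\de/(\th^2 r^2)$. The remedy is to exploit $|\na\r_\Si|^2=1$ almost everywhere by subtracting the harmless constant $1$ inside the Bochner integration by parts, which converts the $L^2$ gradient approximation produced by Lemma~\ref{nab-bz|} and Markov into a genuine $\sqrt{\de_1}$-improvement of the Hessian average. Careful bookkeeping of the interlocked thresholds on $\th,\de_1,\de,\k r,\La_r$ is the most delicate part of the argument.
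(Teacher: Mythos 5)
Your route is genuinely different from the paper's. The paper does not improve the Hessian estimate at all: it applies Colding's covering lemma (Lemma 1.19 in \cite{C}) \emph{twice}, first at scale $\th r$ to get balls on which $\mu^*$ is boundedly averaged (hence, via Lemma \ref{nab-bz|}, gradient closeness of order $\th$) and where \eqref{Hessb} gives only $\fint|\mathrm{Hess}_{\mathbf{b}_i}|^2\le c_{n,\k r}/(\th r)^2$, and then a second time at the much smaller scale $\th'\th r$ inside each such ball; the Hessian requirement $\de/(\th'\th r)^2$ at the final scale is then met trivially by choosing $\th'\le\sqrt{\de\,c_{n,\k r}^{-1}\be_\ep^{-1}}$, i.e.\ by rescaling rather than by improving the bound. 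You instead keep a single covering at scale $\th r$, select good balls by Markov from the summed estimate of Lemma \ref{nab-bz|} together with \eqref{ThmuLaCONT} (that part is correct, including the provenance of the factor $\th$), and then try to upgrade the Hessian bound itself by the Bochner identity with the constant $|\na\r_\Si|^2=1$ subtracted under $\int\De_N\phi(\cdot)$. If carried out correctly this is a legitimate alternative that avoids the double decomposition, at the price of needing cutoffs with $|\De_N\phi|\le C/(\th r)^2$ under only a Ricci lower bound — this is true but is itself a nontrivial input (Cheeger--Colding's cutoff construction in \cite{CC}), which you assert without justification; a naive distance-type cutoff only controls $|\na\phi|$, and the one-integration-by-parts variant of Bochner that needs only $|\na\phi|$ destroys exactly the cancellation you rely on.

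There is, however, a concrete gap as written: a mismatch of radii in the key Bochner step. You invoke Lemma \ref{nab-bz|} with radius parameter $2\th r$, so the closeness $\int_{B_{2\th r}(p_i)}|\na\mathbf{b}_i-\na\r_\Si|^2\le 8\th r\,\mu^*(B_{4\th r}(p_i))$ (and hence the Markov-selected smallness $\de_1$) is available only on $B_{2\th r}(p_i)$, while your cutoff is $\equiv1$ on $B_{2\th r}(p_i)$ with support in $B_{4\th r}(p_i)$, so $\De_N\phi$ is supported precisely in the annulus $B_{4\th r}(p_i)\setminus B_{2\th r}(p_i)$, where you control $\bigl||\na\mathbf{b}_i|^2-1\bigr|$ only by the Cheng--Yau bound $O(1)$. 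The term $\f12\int\De_N\phi\,(|\na\mathbf{b}_i|^2-1)$ is then only $O\bigl((\th r)^{-2}\mathcal{H}^{n+1}(B_{4\th r}(p_i))\bigr)$, and the claimed $C_2\sqrt{\de_1}/(\th^2r^2)$ does not follow, so no improvement over \eqref{Hessb} is obtained. The fix is routine but must be made explicit: apply Lemma \ref{nab-bz|} at the doubled scale (harmonic on $B_{8\th r}(p_i)$, gradient closeness on $B_{4\th r}(p_i)$, still $\subset B_{2r}(z)$ for $\th<1/8$ so \eqref{ThmuLaCONT} applies), run the Markov selection with averages over $B_{4\th r}(p_i)$, and only then conclude both the gradient bound on $B_{2\th r}(p_i)$ (by volume doubling) and the Hessian bound, with all thresholds for $\th,\de_1$ depending only on $n,\k r,\La_r,\de$ as required.
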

\begin{proof}
From Lemma 1.19 in \cite{C} and \eqref{ThmuLaCONT},
for any $\ep>0$ there are constants $\be_\ep\ge1$, $\th_\ep\le1$ and an integer $N_\ep\ge1$ depending on $n,\k r,\La_r,\ep$, and a constant $\la_*>0$ depending only on $n,\k r$ such that
for each $0<\th\le\th_\ep$, there exist finitely many balls $B_{\th r}(y_i)\subset B_r(z)$ for $i=1,\cdots,N_\ep$ with
\begin{equation}\aligned\label{layi}
\sum_{1\le i\le N_\ep}\mathcal{H}^{n+1}\left(B_{\th r}(y_i)\right)\le\la_*\mathcal{H}^{n+1}\left(B_{r}(z)\right),
\endaligned
\end{equation}
\begin{equation}\aligned\label{Bthryi}
\mathcal{H}^{n+1}\left(\cup_{1\le i\le N_\ep}B_{\th r}(y_i)\right)\ge(1-\ep)\mathcal{H}^{n+1}\left(B_{r}(z)\right),
\endaligned
\end{equation}
and
\begin{equation}\aligned\label{ndetanhmuyi}
\mu^*(B_{4\th r}(y_i))\le \f{\be_\ep}r\left(c_{n,\k r}+\La_r\right)\mathcal{H}^{n+1}(B_{2\th r}(y_i))
\endaligned
\end{equation}
for each $i=1,\cdots,N_\ep$.
From Lemma \ref{nab-bz|} and \eqref{ndetanhmuyi}, there is a harmonic function $\mathbf{b}_i$ on $B_{4\th r}(y_i)$ with $|\mathbf{b}_i|\le4\th r$ on $B_{4\th r}(y_i)$ such that
\begin{equation}\aligned\label{nabirSi}
\int_{B_{2\th r}(y_i)}\left|\na\mathbf{b}_i-\na \r_{\Si}\right|^2\le 8\th r\mu^*(B_{4\th r}(y_i))
\le 8\th\be_\ep \left(c_{n,\k r}+\La_r\right)\mathcal{H}^{n+1}(B_{2\th r}(y_i)).
\endaligned
\end{equation}
From \eqref{Hessb}, we have
\begin{equation}\aligned\label{Hessbi}
\fint_{B_{2\th r}(y_i)}\left|\mathrm{Hess}_{\mathbf{b}_i}\right|^2\le\f{c_{n,\k r}}{\th^2r^2}.
\endaligned
\end{equation}

For any fixed $i\in\{1,\cdots,N_\ep\}$ and $\th\in(0,\th_\ep]$, we use Lemma 1.19 in \cite{C} again for \eqref{nabirSi}\eqref{Hessbi} as follows. For each $0<\th'\le\th_\ep$
there exist finitely many balls $B_{\th'\th r}(x_{i,j})\subset B_{\th r}(y_i)$ for $j=1,\cdots,N_\ep$ with
\begin{equation}\aligned\label{Bthrxjyi}
\mathcal{H}^{n+1}\left(\cup_{1\le j\le N_\ep}B_{\th' \th r}(x_{i,j})\right)\ge(1-\ep)\mathcal{H}^{n+1}\left(B_{\th r}(y_i)\right),
\endaligned
\end{equation}
\begin{equation}\aligned\label{nabirSi*}
\fint_{B_{2\th'\th r}(x_{i,j})}\left|\na\mathbf{b}_i-\na \r_{\Si}\right|^2\le
 8\th\be_\ep^2 \left(c_{n,\k r}+\La_r\right),
\endaligned
\end{equation}
and
\begin{equation}\aligned\label{Hessbi*}
\fint_{B_{2\th'\th r}(x_{i,j})}\left|\mathrm{Hess}_{\mathbf{b}_i}\right|^2\le\f{c_{n,\k r}\be_\ep}{\th^2r^2}.
\endaligned
\end{equation}
Noting 
\begin{equation}\aligned
&\cup_{1\le i\le N_\ep}B_{\th r}(y_i)\setminus\cup_{1\le i,j\le N_\ep}B_{\th' \th r}(x_{i,j})\\
=&\cup_{1\le i\le N_\ep}\left(B_{\th r}(y_i)\cap\left(\cap_{1\le k,j\le N_\ep}(B_r(z)\setminus B_{\th' \th r}(x_{k,j}))\right)\right)\\
\subset&\cup_{1\le i\le N_\ep}\left(B_{\th r}(y_i)\cap\left(\cap_{1\le j\le N_\ep}(B_r(z)\setminus B_{\th' \th r}(x_{i,j}))\right)\right)\\
=&\cup_{1\le i\le N_\ep}\left(B_{\th r}(y_i)\setminus \cup_{1\le j\le N_\ep}B_{\th' \th r}(x_{i,j})\right).
\endaligned
\end{equation}
Combining \eqref{layi}\eqref{Bthryi}\eqref{Bthrxjyi}, we have
\begin{equation}\aligned\label{Bthrxij}
&\mathcal{H}^{n+1}\left(B_r(z)\right)-\mathcal{H}^{n+1}\left(\cup_{1\le i,j\le N_\ep}B_{\th' \th r}(x_{i,j})\right)\\\le&\ep\mathcal{H}^{n+1}\left(B_r(z)\right)+\mathcal{H}^{n+1}\left(\cup_{1\le i\le N_\ep}B_{\th r}(y_i)\right)-\mathcal{H}^{n+1}\left(\cup_{1\le i,j\le N_\ep}B_{\th' \th r}(x_{i,j})\right)\\
\le&\ep\mathcal{H}^{n+1}\left(B_r(z)\right)+\sum_{1\le i\le N_\ep}\mathcal{H}^{n+1}\left(B_{\th r}(y_i)\setminus \cup_{1\le j\le N_\ep}B_{\th' \th r}(x_{i,j})\right)\\
 \le&\ep\mathcal{H}^{n+1}\left(B_r(z)\right)+\ep\sum_{1\le i\le N_\ep}\mathcal{H}^{n+1}\left(B_{\th r}(y_i)\right)\le(1+\la_*)\ep\mathcal{H}^{n+1}\left(B_r(z)\right).
\endaligned
\end{equation}
Let $\de=(1+\la_*)\ep$, $\th$ be a positive constant $\le\min\left\{\th_\ep,\f18\de\be_\ep^{-2}\left(c_{n,\k r}+\La_r\right)^{-1}\right\}$, and $\th'$ be a positive constant $\le\min\left\{\th_\ep,\sqrt{\de c_{n,\k r}^{-1}\be_\ep^{-1}}\right\}$, then from \eqref{nabirSi*}\eqref{Hessbi*} it follows that
\begin{equation}\aligned
\fint_{B_{2\th'\th r}(x_{i,j})}\left|\na\mathbf{b}_i-\na \r_{\Si}\right|^2\le\de,
\endaligned
\end{equation}
and
\begin{equation}\aligned
\fint_{B_{2\th'\th r}(x_{i,j})}\left|\mathrm{Hess}_{\mathbf{b}_i}\right|^2\le\f{\de}{\th^2(\th')^2r^2}.
\endaligned
\end{equation}
This completes the proof.
\end{proof}

Let $q$ be a fixed point in $B_R(p)$, for any point $x\in B_R(p)$ let $\g_{x,q}$ denote a normalized minimizing geodesic in $B_{2R}(p)$ with $\g_{x,q}(0)=x$, $\g_{x,q}(d(q,x))=q$ and $|\dot{\g}_{x,q}|=1$.
\begin{lemma}\label{bzuHessu}
Let $B_{2r}(z)\subset B_{R}(p)$, $q$ be a point in $B_R(p)$ with $d(q,z)\ge2\th r$ for $0<\th<1$. If $u$ is a smooth function on $B_{2\th r}(z)$, then for any $t\in(0,\th r)$ we have
\begin{equation}\aligned
&\int_{x\in B_{\th r}(z)}\left|(\r_\Si\circ\g_{x,q})'(t)-\f{\r_\Si(\g_{x,q}(\th r))-\r_\Si(x)}{\th r}\right|\\
\le&c_{n,\k r}\int_{B_{2\th r}(z)}\left(\left|\na(\r_\Si-u)\right|+\th r\left|\mathrm{Hess}_{u}\right|\right),
\endaligned
\end{equation}
where $c_{n,\k r}$ is a general positive constant depending on $n,\k r$.
\end{lemma}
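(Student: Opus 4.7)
The plan is to bound the integrand by a double-averaged difference of one-sided derivatives of $\rho_\Sigma$ along $\gamma_{x,q}$, then insert the smooth test function $u$ to transfer all estimates onto $|\na(\rho_\Sigma-u)|$ and $|\mathrm{Hess}_u|$. First, since $\rho_\Sigma$ is Lipschitz and $\gamma_{x,q}$ is arclength-parametrized, the fundamental theorem of calculus yields
$$\frac{\rho_\Sigma(\gamma_{x,q}(\theta r))-\rho_\Sigma(x)}{\theta r}=\frac{1}{\theta r}\int_0^{\theta r}(\rho_\Sigma\circ\gamma_{x,q})'(s)\,ds,$$
so the left integrand equals $\bigl|\tfrac{1}{\theta r}\int_0^{\theta r}[(\rho_\Sigma\circ\gamma_{x,q})'(t)-(\rho_\Sigma\circ\gamma_{x,q})'(s)]\,ds\bigr|$, which the triangle inequality bounds by $\tfrac{1}{\theta r}\int_0^{\theta r}|(\rho_\Sigma\circ\gamma_{x,q})'(t)-(\rho_\Sigma\circ\gamma_{x,q})'(s)|\,ds$.

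Second, writing $(\rho_\Sigma\circ\gamma_{x,q})'(\tau)=((\rho_\Sigma-u)\circ\gamma_{x,q})'(\tau)+(u\circ\gamma_{x,q})'(\tau)$ for $\tau=t,s$ and using that $u$ is smooth so $(u\circ\gamma_{x,q})'(t)-(u\circ\gamma_{x,q})'(s)=\int_s^t\mathrm{Hess}_u(\dot\gamma_{x,q},\dot\gamma_{x,q})\,d\tau$, I obtain the pointwise estimate
$$|(\rho_\Sigma\circ\gamma_{x,q})'(t)-(\rho_\Sigma\circ\gamma_{x,q})'(s)|\le|\na(\rho_\Sigma-u)|(\gamma_{x,q}(t))+|\na(\rho_\Sigma-u)|(\gamma_{x,q}(s))+\int_0^{\theta r}|\mathrm{Hess}_u|(\gamma_{x,q}(\tau))\,d\tau,$$
where I used $|\dot\gamma_{x,q}|=1$ together with the a.e.\ differentiability of $\rho_\Sigma-u$ along the geodesic.

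Third, I integrate over $x\in B_{\theta r}(z)$. Note that for $\tau\in(0,\theta r)$ the bound $d(\gamma_{x,q}(\tau),z)\le\tau+d(x,z)\le 2\theta r$ places every relevant geodesic segment inside $B_{2\theta r}(z)$. For the fixed-time term, the map $x\mapsto\gamma_{x,q}(t)$ has Jacobian bounded below under $\mathrm{Ric}_N\ge-n\kappa^2$ (the hypothesis $d(q,z)\ge 2\theta r>t$ keeps us well away from the endpoint $q$ and hence its cut locus for generic $x$), so Bishop–Gromov gives
$$\int_{B_{\theta r}(z)}|\na(\rho_\Sigma-u)|(\gamma_{x,q}(t))\,dx\le c_{n,\kappa r}\int_{B_{2\theta r}(z)}|\na(\rho_\Sigma-u)|.$$
For the $s$-averaged gradient term and for the $\tau$-integrated Hessian term, I apply the one-sided form of the Cheeger–Colding segment inequality (recorded in the Preliminary section; the one-endpoint-fixed version follows from the standard two-endpoint form applied with $A_1=B_{\theta r}(z)$, $A_2=B_\epsilon(q)$ and $\epsilon\to0$), producing the bound $\int_{B_{\theta r}(z)}\int_0^{\theta r}g(\gamma_{x,q}(\tau))\,d\tau\,dx\le c_{n,\kappa r}\theta r\int_{B_{2\theta r}(z)}g$. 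Summing the three contributions yields the lemma, with the factor $\theta r$ appearing naturally on the Hessian term.

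The main obstacle I anticipate is the careful derivation of the one-sided segment inequality and the uniform Jacobian bound for $x\mapsto\gamma_{x,q}(t)$ across $x\in B_{\theta r}(z)$; both are consequences of the Ricci lower bound and the quantitative gap $d(q,z)\ge 2\theta r$, but attention must be paid to the cut locus of $q$ (which has measure zero, so integrals are unaffected) and to the a.e.\ nature of the derivative of the Lipschitz function $\rho_\Sigma$ along geodesics. Everything else is essentially bookkeeping along the outline above.
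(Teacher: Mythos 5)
Your overall route is the same as the paper's: the identity $\f{\r_\Si(\g_{x,q}(\th r))-\r_\Si(x)}{\th r}=\f1{\th r}\int_0^{\th r}(\r_\Si\circ\g_{x,q})'(s)ds$, insertion of $u$ so that the fixed-time and averaged terms carry $|\na(\r_\Si-u)|$ and the remainder carries $\int_0^{\th r}|\mathrm{Hess}_u|$, and then a pushforward estimate for $x\mapsto\g_{x,q}(t)$ coming from the Ricci lower bound (this is exactly the paper's claim \eqref{fgqxt}, proved by the comparison $J(l+t,\vartheta)\le c_{n,\k r}J(l,\vartheta)$ in polar coordinates about $q$). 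Up to that point your argument is essentially the paper's proof.

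The one step that would fail as written is your justification of the ``one-sided segment inequality'' for the $s$-averaged and Hessian terms. First, no such one-sided statement is recorded in \S 2; only the usual two-endpoint segment inequality is mentioned there. Second, the limiting argument $A_1=B_{\th r}(z)$, $A_2=B_\ep(q)$, $\ep\to0$ does not work: the Cheeger--Colding segment inequality bounds the double integral by a constant times $\big(\mathcal{H}^{n+1}(A_1)+\mathcal{H}^{n+1}(A_2)\big)\int_W g$, so after dividing by $\mathcal{H}^{n+1}(B_\ep(q))$ to fix the endpoint at $q$, the constant degenerates like $\mathcal{H}^{n+1}(A_1)/\mathcal{H}^{n+1}(B_\ep(q))\to\infty$; the desired inequality with one endpoint pinned at a single point is not a formal consequence of the two-endpoint version. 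The repair is already contained in your own first term: for each fixed $\tau\in(0,\th r)$ apply the same Jacobian/Bishop--Gromov pushforward bound $\int_{x\in B_{\th r}(z)}g(\g_{x,q}(\tau))\le c_{n,\k r}\int_{B_{2\th r}(z)}g$, and then integrate in $\tau$ by Fubini to get $\int_{B_{\th r}(z)}\int_0^{\th r}g(\g_{x,q}(\tau))\,d\tau\,dx\le c_{n,\k r}\,\th r\int_{B_{2\th r}(z)}g$. This is precisely how the paper handles these terms (it takes the supremum over $t\in(0,\th r)$ of the fixed-time integrals and applies \eqref{fgqxt}), and no segment inequality is needed anywhere. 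With that substitution your proof coincides with the paper's; the remaining points you flag (cut locus of $q$, a.e.\ differentiability of $\r_\Si$ along the geodesics) are handled the same way in the paper, via the polar-coordinate change of variables and Fubini.
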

\begin{proof}
For any $L^1$-function $f\ge0$ on $B_{2r}(z)\subset B_{R}(p)$, $q\in B_{R}(p)$, $d(q,z)\ge2\th r$, $0<\th<1$, $0<t<\th r$, we claim
\begin{equation}\aligned\label{fgqxt}
\int_{x\in B_{\th r}(z)}f(\g_{x,q}(t))\le c_{n,\k r}\int_{B_{\th r+t}(z)}f.
\endaligned
\end{equation}
Let us prove \eqref{fgqxt} using the Laplacian comparison theorem (see also \cite{H0} for instance).
For any fixed $q\in B_{R}(p)$, let $\mathcal{C}_q$ denote the cut locus of the distance function from $q$.
Let $$U_{q,z,t,\th r}=\left\{\g_{x,q}(t)|\,x\in \mathcal{C}_q\cap B_{\th r}(z)\right\}.$$
Then for each $y\in U_{q,z,t,\th r}$, there is a unique $x\in \mathcal{C}_q\cap B_{\th r}(z)$ with $y=\g_{x,q}(t)$.
Let $dl^2+g_{\a\be}(l,\vartheta)d\vartheta_\a d\vartheta_\be$ denote the metric of $N$ in the polar coordinate w.r.t. $q$ outside $\mathcal{C}_q$, where $\vartheta=(\vartheta_1,\cdots,\vartheta_n)$ satisfies $|\vartheta|=1$.
Let $J(l,\vartheta)=\sqrt{\det(g_{\a\be}(l,\vartheta))}$.
By the Laplacian comparison theorem, we have
$$J(l+t,\vartheta)\le c_{n,\k r}J(l,\vartheta)\qquad\qquad \mathrm{for\ any}\ \  0<t<\th r\le l,$$
where $c_{n,\k r}$ is a positive constant depending on $n,\k r$.
Hence,
\begin{equation}\aligned
\int_{x\in B_{\th r}(z)}f(\g_{x,q}(t))=&\int_{\p B_1(0^{n+1})} \left(\int_{2\th r}^\infty f\left(\mathrm{exp}_{q}((l'-t)\vartheta)\right)\chi_{_{B_{\th r}(z)}}J(l',\vartheta)dl'\right)d\vartheta\\
=&\int_{\p B_1(0^{n+1})} \left(\int_{\th r}^\infty f\left(\mathrm{exp}_{q}(l\vartheta)\right)\chi_{_{U_{q,z,t,\th r}}}J(l+t,\vartheta)dl\right)d\vartheta\\
\le c_{n,\k r}\int_{\p B_1(0^{n+1})}& \left(\int_0^\infty f\left(\mathrm{exp}_{q}(l\vartheta)\right)\chi_{_{U_{q,z,t,\th r}}}J(l,\vartheta)dl\right)d\vartheta=c_{n,\k r}\int_{U_{q,z,t,\th r}} f.
\endaligned
\end{equation}
Combining $U_{q,z,t,\th r}\subset B_{\th r+t}(z)$, we get the claim \eqref{fgqxt}.

From the proof of Lemma 1.14 in \cite{C}, for any $t\in(0,\th r)$, $d(q,z)\ge2\th r$ we have
\begin{equation}\aligned
&\int_{x\in B_{\th r}(z)}\left|(\r_\Si\circ\g_{x,q})'(t)-\f{\r_\Si(\g_{x,q}(\th r))-\r_\Si(x)}{\th r}\right|\\
\le&\int_{x\in B_{\th r}(z)}\left|((\r_\Si-u)\circ\g_{x,q})'(t)\right|+\int_{x\in B_{\th r}(z)}\left|\int_0^{\th r}\f{((\r_\Si-u)\circ\g_{x,q})'(t)dt}{\th r}\right|\\
&+\int_{x\in B_{\th r}(z)}\left|(u\circ\g_{x,q})'(t)-\f{u(\g_{x,q}(\th r))-u(x)}{\th r}\right|\\
\le&2\sup_{t\in(0,\th r)}\int_{x\in B_{\th r}(z)}\left|\na(\r_\Si-u)\right|(\g_{x,q}(t))+2\int_{x\in B_{\th r}(z)}\left(\int_{0}^{\th r}\left|(u\circ\g_{x,q})''(t)\right|dt\right)\\
\le&2\sup_{t\in(0,\th r)}\int_{x\in B_{\th r}(z)}\left|\na(\r_\Si-u)\right|(\g_{x,q}(t))+2\th r\sup_{t\in(0,\th r)}\int_{x\in B_{\th r}(z)}\left|\mathrm{Hess}_{u}\right|(\g_{x,q}(t)).
\endaligned
\end{equation}
Combining \eqref{fgqxt}, we complete the proof.
\end{proof}

Now let us prove an angle estimate for the distance function $\r_\Si$ from $\Si$ as follows.
\begin{theorem}\label{bzth*}
Let $N$ be an $(n+1)$-dimensional complete Riemannian manifold with $\mathrm{Ric}_N\ge-n\k^2$ on $B_{R+R'}(p)$ for some constants $\k\ge0$ and $R'\ge R>0$.
Let $\Si$ be the support of an $n$-rectifiable varifold in $B_{2R}(p)$ with bounded mean curvature satisfying \eqref{DeNSiTh}.
Let $\La_r$ be the constant defined in \eqref{SiLa}.
For any $q\in B_R(p)$, $B_{2r}(z)\subset B_R(p)$ and $\ep>0$, there is a constant $\th_*$ depending on $n,\k R,\La_r,\ep$ such that
\begin{equation*}\aligned
\fint_{x\in B_{r}(z)}\left|(\r_\Si\circ\g_{x,q})'(t)-\f{\r_\Si(\g_{x,q}(\th r))-\r_\Si(x)}{\th r}\right|<2\f{\mathcal{H}^{n+1}\left(B_r(z)\cap B_{3\th r}(q)\right)}{\mathcal{H}^{n+1}(B_r(z))}+\ep
\endaligned
\end{equation*}
for any $0<\th\le \th_*$ and $0<t\le\th r$.
\end{theorem}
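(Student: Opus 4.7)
\medskip

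The plan follows Colding's scheme from \cite{C}: cover most of $B_r(z)$ by small balls on which $\rho_\Sigma$ is well approximated by a harmonic function with $L^2$-Hessian control, apply the pointwise estimate of Lemma \ref{bzuHessu} on each of these small balls, and treat the portion near $q$ by the crude $1$-Lipschitz bound, since the hypothesis $d(q,\cdot)\ge 2\theta r$ of Lemma \ref{bzuHessu} fails there. The ratio $\mathcal{H}^{n+1}(B_r(z)\cap B_{3\theta r}(q))/\mathcal{H}^{n+1}(B_r(z))$ on the right-hand side of the target inequality is exactly the contribution of this bad set.

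Given $\epsilon>0$, I first fix $\delta\in(0,1)$ small enough that $2\delta+C_0\sqrt{\delta}<\epsilon$, where $C_0=C_0(n,\kappa R)$ is the constant produced below. Applying Lemma \ref{Decomb} with this $\delta$ furnishes $\theta_*=\theta_*(n,\kappa R,\Lambda_r,\delta)$ and, for every $0<\theta\le\theta_*$, a finite family of balls $B_{\theta r}(p_i)\subset B_r(z)$ covering $B_r(z)$ outside a set of $\mathcal{H}^{n+1}$-measure at most $\delta\,\mathcal{H}^{n+1}(B_r(z))$, together with harmonic functions $\mathbf{b}_i$ on neighborhoods of $B_{2\theta r}(p_i)$ satisfying the averaged estimates
\begin{equation*}
\fint_{B_{2\theta r}(p_i)}|\nabla\mathbf{b}_i-\nabla\rho_\Sigma|^2\le\delta,\qquad \fint_{B_{2\theta r}(p_i)}|\mathrm{Hess}_{\mathbf{b}_i}|^2\le\delta/(\theta r)^2.
\end{equation*}
Split the indices into $I_{\mathrm{near}}=\{i:d(q,p_i)<2\theta r\}$ and $I_{\mathrm{far}}=\{i:d(q,p_i)\ge 2\theta r\}$.

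On the uncovered part of $B_r(z)$ and on $\bigcup_{i\in I_{\mathrm{near}}}B_{\theta r}(p_i)\subset B_{3\theta r}(q)$, I only use that $\rho_\Sigma$ is $1$-Lipschitz, so the integrand is pointwise at most $2$; this contributes to the averaged integral at most $2\delta+2\mathcal{H}^{n+1}(B_r(z)\cap B_{3\theta r}(q))/\mathcal{H}^{n+1}(B_r(z))$. For $i\in I_{\mathrm{far}}$, Lemma \ref{bzuHessu} with $z=p_i$ and $u=\mathbf{b}_i$ applies and gives
\begin{equation*}
\int_{B_{\theta r}(p_i)}\left|(\rho_\Sigma\circ\gamma_{x,q})'(t)-\frac{\rho_\Sigma(\gamma_{x,q}(\theta r))-\rho_\Sigma(x)}{\theta r}\right|\le c_{n,\kappa r}\int_{B_{2\theta r}(p_i)}\bigl(|\nabla(\rho_\Sigma-\mathbf{b}_i)|+\theta r\,|\mathrm{Hess}_{\mathbf{b}_i}|\bigr).
\end{equation*}
Cauchy-Schwarz with the averaged $L^2$ bounds turns the right-hand side into at most $2c_{n,\kappa r}\sqrt{\delta}\,\mathcal{H}^{n+1}(B_{2\theta r}(p_i))$, and Bishop-Gromov yields $\mathcal{H}^{n+1}(B_{2\theta r}(p_i))\le c_{n,\kappa r}\mathcal{H}^{n+1}(B_{\theta r}(p_i))$. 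Summing over $i\in I_{\mathrm{far}}$ and invoking the packing estimate $\sum_i\mathcal{H}^{n+1}(B_{\theta r}(p_i))\le C_{n,\kappa r}\mathcal{H}^{n+1}(B_r(z))$ implicit in the proof of Lemma \ref{Decomb} produces a total contribution of at most $C_0\sqrt{\delta}\,\mathcal{H}^{n+1}(B_r(z))$. Adding the three contributions and dividing by $\mathcal{H}^{n+1}(B_r(z))$ gives the desired bound in view of the choice of $\delta$.

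The main technical point is tracking the constants so that the final threshold $\theta_*$ depends only on $n,\kappa R,\Lambda_r,\epsilon$; this requires that both the packing constant for the cover and the Bishop-Gromov ratios are controlled uniformly in $\theta$ (they are, because $\kappa r\le\kappa R$ throughout), and that the implicit constants in Lemma \ref{bzuHessu} and Lemma \ref{Decomb} only involve $n,\kappa r,\Lambda_r$. Otherwise the argument is a clean combination of Lemmas \ref{Decomb} and \ref{bzuHessu}, with the split into near/far portions providing the two terms on the right-hand side of the target inequality.
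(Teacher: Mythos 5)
Your proposal is correct and follows essentially the same route as the paper's proof: Lemma \ref{Decomb} supplies the cover by balls $B_{\th r}(p_i)$ with harmonic approximations, Lemma \ref{bzuHessu} plus Cauchy--Schwarz handles the balls with $d(q,p_i)\ge 2\th r$, and the crude $1$-Lipschitz bound (integrand $\le 2$) handles the uncovered set and the near balls, which lie in $B_{3\th r}(q)$. The only cosmetic difference is the packing estimate $\sum_i\mathcal{H}^{n+1}(B_{2\th r}(p_i))\le c_{n,\k r}\mathcal{H}^{n+1}(B_r(z))$: the paper passes to a Vitali subcover with disjoint balls $B_{\th r/5}(p_i)$ and applies Bishop--Gromov, while you appeal to the bounded-overlap bound implicit in the construction of Lemma \ref{Decomb}; either justification works.
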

\begin{proof}
The idea of the proof comes from Colding (see Proposition 1.32 in \cite{C}).
From Lemma \ref{Decomb}, for any $\de >0$ there is a constant $\th_*=\th_{n,\k r,\La_r,\de }$ depending on $n,\k r,\La_r,\de $ such that
for each $0<\th\le\th_*$, there exist finitely many balls $B_{\th r}(p_i )\subset B_r(z)$ for $i=1,\cdots,N_*$ and harmonic functions $\mathbf{b}_i$ with
\begin{equation}\aligned\label{thrpide}
\mathcal{H}^{n+1}\left(\cup_{1\le i\le N_*}B_{\th r}(p_i )\right)\ge(1-\de )\mathcal{H}^{n+1}\left(B_{r}(z)\right),
\endaligned
\end{equation}
\begin{equation}\aligned\label{2thrbirside}
\fint_{B_{2\th r}(p_i )}\left|\na\mathbf{b}_i-\na \r_\Si\right|^2\le \de ,
\endaligned
\end{equation}
and
\begin{equation}\aligned\label{2thrHbirde}
\fint_{B_{2\th r}(p_i )}\left|\mathrm{Hess}_{\mathbf{b}_i}\right|^2\le \f{\de }{\th^2r^2}.
\endaligned
\end{equation}
By covering lemma (see \cite{St} for instance), we can assume that $B_{\th r/5}(p_i )\cap B_{\th r/5}(p_j )=\emptyset$ for all $i\neq j$.
From Lemma \ref{bzuHessu}, Cauchy inequality and \eqref{2thrbirside}\eqref{2thrHbirde}, for $0<t\le\th r$ and $d(q,p_i )\ge2\th r$ we have
\begin{equation}\aligned\label{2thrrSiC}
&\f1{\mathcal{H}^{n+1}(B_{2\th r}(p_i ))}\int_{x\in B_{\th r}(p_i )}\left|(\r_\Si\circ\g_{x,q})'(t)-\f{\r_\Si(\g_{x,q}(\th r))-\r_\Si(x)}{\th r}\right|\\
\le &c_{n,\k r}\fint_{B_{2\th r}(p_i )}\left(\left|\na(\r_\Si-\mathbf{b}_i)\right|+\th r\left|\mathrm{Hess}_{\mathbf{b}_i}\right|\right)\\
\le&c_{n,\k r}\left(\left(\fint_{B_{2\th r}(p_i )}\left|\na(\r_\Si-\mathbf{b}_i)\right|^2\right)^{1/2}
+\th r\left(\fint_{B_{2\th r}(p_i )}\left|\mathrm{Hess}_{\mathbf{b}_i}\right|^2\right)^{1/2}\right)\\
\le& c_{n,\k r}\left(\sqrt{\de }+\sqrt{\de }\right),
\endaligned
\end{equation}
where $c_{n,\k r}$ is a general constant depending on $n,\k r$.
We define an index set $\mathcal{I}$ by
$$\mathcal{I}=\{1\le i\le N_*|\, d(q,p_i )\ge2\th r\}.$$
From \eqref{thrpide}\eqref{2thrrSiC}, we have
\begin{equation}\aligned
&\int_{x\in B_{r}(z)}\left|(\r_\Si\circ\g_{x,q})'(t)-\f{\r_\Si(\g_{x,q}(\th r))-\r_\Si(x)}{\th r}\right|\\
\le& 2\mathcal{H}^{n+1}\left(B_r(z)\cap B_{3\th r}(q)\right)+2\mathcal{H}^{n+1}\left(B_{r}(z)\setminus\cup_{1\le i\le N_*}B_{\th r}(p_i )\right)\\
&+\int_{x\in\cup_{i\in \mathcal{I}} B_{\th r}(p_i )}\left|(\r_\Si\circ\g_{x,q})'(t)-\f{\r_\Si(\g_{x,q}(\th r))-\r_\Si(x)}{\th r}\right|\\
\le& 2\mathcal{H}^{n+1}\left(B_r(z)\cap B_{3\th r}(q)\right)+2\de \mathcal{H}^{n+1}\left(B_r(z)\right)\\
& +\sum_{i\in \mathcal{I}}\int_{x\in B_{\th r}(p_i )}\left|(\r_\Si\circ\g_{x,q})'(t)-\f{\r_\Si(\g_{x,q}(\th r))-\r_\Si(x)}{\th r}\right|\\
\le& 2\mathcal{H}^{n+1}\left(B_r(z)\cap B_{3\th r}(q)\right)+2\de \mathcal{H}^{n+1}\left(B_r(z)\right)+2c_{n,\k r}\sqrt{\de }\sum_{i\in \mathcal{I}}\mathcal{H}^{n+1}(B_{2\th r}(p_i )).
\endaligned
\end{equation}
With Bishop-Gromov comparison theorem,
\begin{equation}\aligned
\sum_{i\in \mathcal{I}}\mathcal{H}^{n+1}(B_{2\th r}(p_i ))\le c_{n,\k r}\sum_{i\in \mathcal{I}}\mathcal{H}^{n+1}\left(B_{\th r/5}(p_i )\right)\le c_{n,\k r}\mathcal{H}^{n+1}(B_{r}(z)),
\endaligned
\end{equation}
which completes the proof.
\end{proof}

\section{Limits of the distance functions from minimal hypersurfaces}

Let $N_i$ be a sequence of $(n+1)$-dimensional complete Riemannian manifolds with $\mathrm{Ric}_{N_i}\ge-n\k^2$ on $B_{R+R'}(p_i)\subset N_i$ for some constants $\k\ge0$ and $R'\ge R>0$.
Suppose that $B_{R+R'}(p_i)$ converges to a metric ball $B_{R+R'}(p_\infty)$ in the Gromov-Hausdorff sense.
Namely, for each integer $i\ge1$, there is a sequence of $\ep_i$-Gromov-Hausdorff approximations $\Phi_i:\, \overline{B_{R+R'}(p_i)}\to\overline{B_{R+R'}(p_\infty)}$ for some sequence $\ep_i\to0$. Let $\nu$ be a unique Radon measure on $B_{R+R'}(p_\infty)$ defined as \eqref{nuinfty}.
For any $x,q\in B_R(p_\infty)$, let $\g_{x,q}$ denote a minimal geodesic segment from $x$ to $q$ in $B_{2R}(p_\infty)$ with $|\dot{\g}_{x,q}|=1$, $\g_{x,q}(0)=x$ and $\g_{x,q}(\r_q(x))=q$. 
For simplicity, let $\r_q$ denote the distance function $d(q,\cdot)$ for each $q\in B_R(p_\infty)$ or $q\in B_R(p_i)$.

For each integer $i\ge1$, let $f_i$ be a Lipschitz function on $B_{R}(p_i)$ satisfying
$$\limsup_{i\rightarrow\infty}\sup_{B_{R}(p_i)}\left(|f_i|+\mathrm{Lip} f_i\right)<\infty.$$
For a function $f_\infty$ on $B_{R}(p_\infty)$, we say $f_i\rightarrow f_\infty$ on $B_{R}(p_\infty)$ if $f_i(x_i)\rightarrow f_\infty(x)$ for any $x\in B_{R}(p_\infty)$ and any sequence $x_i\in B_{R}(p_i)$ with $x_i\rightarrow x$. In particular, $f_\infty$ is Lipschitz with $\mathbf{Lip} f_\infty\le\limsup_{i\rightarrow\infty}\mathbf{Lip} f_i$.
According to Honda \cite{H1}, we further say $df_i\to df_\infty$ on $B_R(p_\infty)$ if for every $\de>0$, every $x_\infty\in B_R(p_\infty)$, every $z_\infty\in B_R(p_\infty)$, every sequence $B_R(p_i)\ni x_i\to x_\infty$, and every sequence $B_R(p_i)\ni z_i\to z_\infty$ there exists a constant $r>0$ such that
\begin{equation}\aligned
\limsup_{i\rightarrow\infty}\left|\fint_{B_{t}(x_i)}\lan df_i,d\r_{z_i}\ran-\fint_{B_{t}(x_\infty)}\lan df_\infty,d\r_{z_\infty}\ran d\nu\right|<\de
\endaligned
\end{equation}
and
\begin{equation}\aligned
\limsup_{i\rightarrow\infty}\fint_{B_{t}(x_i)}|df_i|^2\le\fint_{B_{t}(x_\infty)}|df_\infty|^2d\nu+\de
\endaligned
\end{equation}
for any $t\in(0,r]$.
We denote $(f_i,df_i)\to(f_\infty,df_\infty)$ on $B_R(p_\infty)$ if both $f_i\to f_\infty$ and $df_i\to df_\infty$ on $B_R(p_\infty)$.
Moreover, Honda gave a more general definition beyond the Lipschitz condition \cite{H1}.

Let $\phi_\infty$ be a Lipschitz function on $B_{R}(p_\infty)$. From Lemma 10.7 in \cite{Ch} by Cheeger, there is a sequence of Lipschitz functions $\phi_i$ on $B_{R}(p_i)$ such that $\phi_i\to \phi_\infty$ with $\mathbf{Lip} \phi_i\le\mathbf{Lip} \phi_\infty$.
Furthermore, from Theorem 4.2 in \cite{H1}, $\phi_i$ can be chosen so that $(\phi_i,d\phi_i)\to(\phi_\infty,d\phi_\infty)$ on $B_R(p_\infty)$.
In Theorem 1.1 of \cite{H1}, Honda proved the following theorem.
\begin{theorem}\label{Honda}
For each $B_t(x_\infty)\subset B_R(p_\infty)$, each sequence $B_R(p_i)\ni x_i\to x_\infty$, let $f_i$ be a Lipschitz function on $B_{R}(p_i)$ with $\limsup_{i\rightarrow\infty}\mathbf{Lip} f_i<\infty$ such that $(f_i,df_i)\to(f_\infty,df_\infty)$ on $B_R(p_\infty)$ for some Lipschitz function $f_\infty$ on $B_R(p_\infty)$. Then there holds
\begin{equation}\aligned
\lim_{i\rightarrow\infty}\fint_{B_{t}(x_i)}\lan df_i,d\phi_i\ran=\fint_{B_{t}(x_\infty)}\lan df_\infty,d\phi_\infty\ran d\nu.
\endaligned
\end{equation}
\end{theorem}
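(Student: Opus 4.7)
The plan is to reduce to the case where $\phi_\infty$ is a finite linear combination of distance functions, then use an $H^{1,2}$-density argument together with Cauchy–Schwarz to handle arbitrary Lipschitz $\phi_\infty$. The hypothesis $(f_i,df_i)\to(f_\infty,df_\infty)$ gives two useful facts: averaged inner products against differentials of distance functions converge, and $\limsup_i\fint_{B_t(x_i)}|df_i|^2\le\fint_{B_t(x_\infty)}|df_\infty|^2 d\nu$.

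Step 1 (distance functions). Suppose $\phi_\infty=\sum_{j=1}^m a_j\r_{z_{j,\infty}}$ for constants $a_j\in\R$ and points $z_{j,\infty}\in B_R(p_\infty)$. Picking $B_R(p_i)\ni z_{j,i}\to z_{j,\infty}$ and setting $\phi_i=\sum_j a_j\r_{z_{j,i}}$, we have $(\phi_i,d\phi_i)\to(\phi_\infty,d\phi_\infty)$ by construction. By linearity of $\lan df_i,\cdot\ran$ and the definition of $df_i\to df_\infty$ applied to each $\r_{z_{j,i}}$ individually (with $\de>0$ arbitrary),
\begin{equation*}
\lim_{i\to\infty}\fint_{B_t(x_i)}\lan df_i,d\phi_i\ran=\sum_j a_j\fint_{B_t(x_\infty)}\lan df_\infty,d\r_{z_{j,\infty}}\ran d\nu=\fint_{B_t(x_\infty)}\lan df_\infty,d\phi_\infty\ran d\nu.
\end{equation*}

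Step 2 (density and Cauchy-Schwarz). For a general Lipschitz $\phi_\infty$ and any $\eta>0$, I would approximate $\phi_\infty$ in the $H^{1,2}(B_t(x_\infty))$-norm by some $\psi_\infty=\sum_j a_j\r_{z_{j,\infty}}$. This is the crucial density: near each $k_*$-regular point, the $e^{\pm\la}$-bi-Lipschitz chart $\phi_{k_*,i}:C_{k_*,i}\to\R^{k_*}$ from condition iii) allows us to match coordinate functions by nearby distance functions, and a partition of unity together with the Poincar\'e inequality \eqref{qPoincare} promotes this local to a global $H^{1,2}$-approximation. Let $\psi_i=\sum_j a_j\r_{z_{j,i}}$ denote the corresponding combination on $B_R(p_i)$. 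Decompose
\begin{equation*}
\fint_{B_t(x_i)}\lan df_i,d\phi_i\ran=\fint_{B_t(x_i)}\lan df_i,d\psi_i\ran+\fint_{B_t(x_i)}\lan df_i,d(\phi_i-\psi_i)\ran.
\end{equation*}
By Step 1 applied to $\psi_i$, the first term converges to $\fint_{B_t(x_\infty)}\lan df_\infty,d\psi_\infty\ran d\nu$, which is within $O(\sqrt{\eta})$ of $\fint_{B_t(x_\infty)}\lan df_\infty,d\phi_\infty\ran d\nu$ by Cauchy-Schwarz against $df_\infty$. Cauchy-Schwarz also yields
\begin{equation*}
\Big|\fint_{B_t(x_i)}\lan df_i,d(\phi_i-\psi_i)\ran\Big|\le\Big(\fint_{B_t(x_i)}|df_i|^2\Big)^{1/2}\Big(\fint_{B_t(x_i)}|d(\phi_i-\psi_i)|^2\Big)^{1/2}.
\end{equation*}
The first factor is bounded uniformly in $i$ by hypothesis. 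For the second, expand $|d(\phi_i-\psi_i)|^2=|d\phi_i|^2-2\lan d\phi_i,d\psi_i\ran+|d\psi_i|^2$ and apply Step 1 with $\phi_i$ now in the role of $f_i$ (which is legitimate because $\psi_i$ is itself a linear combination of distance functions), obtaining $\limsup_i\fint|d(\phi_i-\psi_i)|^2\le\fint|d(\phi_\infty-\psi_\infty)|^2 d\nu\le\eta$. Sending $\eta\to0$ finishes the proof.

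The main obstacle I anticipate is the $H^{1,2}$-density in Step 2: converting the abstract bi-Lipschitz coordinates of condition iii) into genuine distance functions $\r_z$ on the Ricci limit space, and verifying that the resulting approximation transfers along the Gromov-Hausdorff sequence, requires the full cotangent bundle machinery of Cheeger \cite{Ch} and Cheeger-Colding \cite{CCo3}. A secondary delicate point is the apparent circularity when bounding $\fint|d(\phi_i-\psi_i)|^2$ via $\fint\lan d\phi_i,d\psi_i\ran$; this is resolved precisely because $\psi_i$ is a linear combination of distance functions, so the Step 1 result (not the full theorem) suffices.
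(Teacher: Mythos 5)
The paper does not actually prove this statement: Theorem \ref{Honda} is quoted verbatim from Honda (\cite{H1}, Theorem 1.1), so the only meaningful comparison is with Honda's argument, whose general flavor (reduction to distance functions, Cauchy--Schwarz, $L^2$ upper semicontinuity of $|df_i|$) your outline does share. However, as written your proposal has two genuine gaps. First, the definition of $df_i\to df_\infty$ is purely local: for given $\de$, $x_\infty$, $z_\infty$ it only produces \emph{some} radius $r>0$ such that the averaged pairings $\fint_{B_s(x_i)}\lan df_i,d\r_{z_i}\ran$ converge for $s\le r$, while the theorem concerns a fixed ball $B_t(x_\infty)$ whose radius $t$ is not at your disposal. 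So even Step 1 is not ``by linearity and the definition applied to each $\r_{z_{j,i}}$''; you need a covering argument (a Vitali-type cover of $B_t(x_\infty)$ by balls small enough that the definition applies, together with the doubling property and volume convergence of $\nu$ and the uniform Lipschitz bounds to control overlaps and the uncovered boundary layer). The same local-versus-fixed-scale issue affects your use of $\limsup_i\fint_{B_t(x_i)}|d\phi_i|^2\le\fint_{B_t(x_\infty)}|d\phi_\infty|^2d\nu$ in Step 2.

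Second, and more seriously, the global density claim in Step 2 --- that an arbitrary Lipschitz $\phi_\infty$ can be approximated in $H^{1,2}(B_t(x_\infty))$ by a \emph{single} finite linear combination $\sum_j a_j\r_{z_{j,\infty}}$ with constant coefficients and centers in $B_R(p_\infty)$ --- is not justified, and your sketched justification cannot deliver it: gluing chart-level approximations with a partition of unity produces functions of the form $\sum_\a\chi_\a\sum_j a_{j,\a}\r_{z_{j,\a}}$, which are no longer linear combinations of distance functions, so Step 1 no longer applies to them and the whole reduction collapses. What the rectifiability and the $e^{\pm\la}$-bi-Lipschitz distance charts of \cite{CCo3} actually give (and what Honda's proof uses) is an a.e.\ small-scale statement: for $\nu$-a.e.\ $x$ and every $\ep>0$ there exist finitely many $z_j$ and constants $a_j$ with $\fint_{B_s(x)}|d\phi_\infty-\sum_j a_jd\r_{z_j}|^2d\nu<\ep$ for all sufficiently small $s$ depending on $x$; since the approximant changes from ball to ball, the patching must be done at the level of the integrals (cover $B_t(x_\infty)$ by such small balls, run the Cauchy--Schwarz argument on each with its own approximant, and sum), not at the level of one global approximating function. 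Relatedly, your assertion that $(\psi_i,d\psi_i)\to(\psi_\infty,d\psi_\infty)$ holds ``by construction'' for combinations of distance functions, which you need both when applying Step 1 with $\phi_i$ in the role of $f_i$ and when computing $\lim_i\fint|d\psi_i|^2$, is itself a nontrivial theorem (convergence of angles between distance functions, due to Cheeger--Colding and Honda \cite{CCo1,H1}), not a formal consequence of the definitions. With these repairs the architecture you propose is essentially Honda's proof; without them, Step 1 is incomplete and Step 2 rests on a density statement that is unavailable in the form you use it.
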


Let $M_i$ be a closed set in $B_{R+R'}(p_i)$ for each $i$.
Suppose that $\Phi_i(M_i)$ converges to a closed set $M_\infty$ in $\overline{B_{R+R'}(p_\infty)}$ in the Hausdorff sense.
Let $\r_{M_\infty}$ denote the distance function from $M_\infty$ on $\overline{B_{R}(p_\infty)}$, i.e., $\r_{M_\infty}(x)=\inf_{y\in M_\infty}d(x,y)$.
From Lemma \ref{c0} in the Appendix I, $\r_{M_i}\to\r_{M_\infty}$ on $B_R(p_\infty)$. Moreover, the convergence is uniform in the following sense.
For any $\ep>0$, there is an $\ep$-net $\{y_1,\cdots,y_{m_\ep}\}$ such that $B_R(p_\infty)\subset\cup_{j=1}^{m_\ep}B_\ep(y_j)$.
For each $j$, let $y_{i,j}\in B_R(p_i)$ be a sequence converging to $y_j$. Then there is an integer $i_\ep>0$ such that $d(y_j,\Phi_i(y_{i,j}))<\ep$ and $|\r_{M_i}(y_{i,j})-\r_{M_\infty}(y_j)|<\ep$ for all $i\ge i_\ep$ and $j=1,\cdots,m_\ep$.
For a point $x\in B_R(p_\infty)$, there is an integer $j$ satisfying $x\in B_\ep(y_j)$.
From $\mathbf{Lip} \r_{M_i},\mathbf{Lip} \r_{M_\infty}\le1$ and the definition of $\Phi_i$, we have
\begin{equation}\aligned\label{rMixiMinfx}
|\r_{M_i}(x_i)-\r_{M_\infty}(x)|\le&|\r_{M_i}(x_i)-\r_{M_i}(y_{i,j})|+|\r_{M_i}(y_{i,j})-\r_{M_\infty}(x)|\\
\le&d(x_i,y_{i,j})+|\r_{M_i}(y_{i,j})-\r_{M_\infty}(y_j)|
+|\r_{M_\infty}(y_j)-\r_{M_\infty}(x)|\\
\le&d(\Phi_i(x_i),\Phi_i(y_{i,j}))+2\ep_i+\ep+d(y_j,x)\\
\le&d(\Phi_i(x_i),x)+d(x,y_j)+d(y_j,\Phi_i(y_{i,j}))+2\ep_i+2\ep\\
\le&d(x,\Phi_i(x_i))+2\ep_i+4\ep
\endaligned
\end{equation}
for each $i\ge i_\de$.

We further assume that $M_i$ is the support of a rectifiable stationary $n$-varifold in $B_{R+R'}(p_i)$ for each integer $i\ge1$ such that $M_i\cap B_{R'}(p_i)\neq\emptyset$ and
\begin{equation}\aligned
\limsup_{i\rightarrow\infty}\f{\mathcal{H}^n(M_i\cap B_{R}(p_i))}{\mathcal{H}^{n+1}(B_R(p_i))}\le\La
\endaligned
\end{equation}
for some constant $\La>0$. Then from the Laplacian comparison for $\r_{M_i}$ by Heintze-Karcher \cite{HeK},
we obtain
\begin{equation}\label{DeNirMi}
\De_{N_i}\r_{M_i}\le n\k\tanh\left(\k\r_{M_i}\right)\qquad \mathrm{on}\ B_{R}(p_i)\setminus M_i
\end{equation}
in the distribution sense (see also Lemma 7.1 in \cite{D1}), where $\De_{N_i}$ denotes the Laplacian of $N_i$.
\begin{theorem}\label{c1}
For each Lipschitz function $\phi$ on $B_R(p_\infty)$, there is a sequence of Lipschitz functions $\phi_i$ on $B_R(p_i)$ satisfying $(\phi_i,d\phi_i)\to(\phi,d\phi)$ on $B_R(p_\infty)$ and $\limsup_{i\rightarrow\infty}\mathbf{Lip}\,\phi_i\le\mathbf{Lip}\,\phi$ such that
\begin{equation}\aligned
\lim_{i\rightarrow\infty}\fint_{B_R(p_{i})}\left\lan d\r_{M_{i}},d\phi_{i}\right\ran=\fint_{B_R(p_\infty)}\left\lan d\r_{M_\infty},d\phi\right\ran d\nu.
\endaligned
\end{equation}
If we further suppose that $\phi$ has compact support in $B_R(p_\infty)\setminus M_\infty$, then we can require that the function $\phi_i$ has compact support in $B_R(p_i)\setminus M_i$ for each $i$.
\end{theorem}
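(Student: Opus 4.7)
The plan is to apply Honda's convergence result (Theorem~\ref{Honda}) with $f_i=\r_{M_i}$ and $f_\infty=\r_{M_\infty}$. Given a Lipschitz $\phi$ on $B_R(p_\infty)$, combining Lemma~10.7 of Cheeger~\cite{Ch} with Theorem~4.2 of Honda~\cite{H1} produces Lipschitz functions $\phi_i$ on $B_R(p_i)$ with $(\phi_i,d\phi_i)\to(\phi,d\phi)$ on $B_R(p_\infty)$ and $\mathbf{Lip}\,\phi_i\le\mathbf{Lip}\,\phi$. With such $\phi_i$, the desired integral identity will follow from Theorem~\ref{Honda} applied on balls $B_t(p_\infty)$ with $t\nearrow R$, provided we can first verify that $(\r_{M_i},d\r_{M_i})\to(\r_{M_\infty},d\r_{M_\infty})$ on $B_R(p_\infty)$ in the sense defined just before Theorem~\ref{Honda}.

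The pointwise, in fact uniform, part $\r_{M_i}\to\r_{M_\infty}$ is precisely the estimate~\eqref{rMixiMinfx}. For the differential convergence we must verify Honda's two integral conditions, and the key tool is the angle estimate Theorem~\ref{bzth*}. At a generic $x\in B_t(x_i)$ one has
\begin{equation*}
\lan d\r_{M_i},d\r_{z_i}\ran(x)=-(\r_{M_i}\circ\g_{x,z_i})'(0),
\end{equation*}
and Theorem~\ref{bzth*} replaces this derivative, in $L^1$-average over $B_r(x_i)$, by the difference quotient $-(\th r)^{-1}\left(\r_{M_i}(\g_{x,z_i}(\th r))-\r_{M_i}(x)\right)$ up to an error that can be made arbitrarily small by choosing first $\th$ and then $r$ small (and taking $d(z_i,x_i)\gg\th r$). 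The uniform convergence $\r_{M_i}\to\r_{M_\infty}$, Gromov--Hausdorff convergence of minimizing geodesics, and the Radon measure convergence defining $\nu$ in~\eqref{nuinfty} then allow one to pass this difference quotient to the analogous quantity on $B_r(x_\infty)$; a second application of Theorem~\ref{bzth*} at a finer scale, via a diagonal argument, identifies that limit with $\fint_{B_t(x_\infty)}\lan d\r_{M_\infty},d\r_{z_\infty}\ran\,d\nu$ up to arbitrary error. For the $L^2$ condition, $|d\r_{M_i}|=1$ off the negligible cut locus and $M_i$, and the same angle estimate, taken with $z_i$ along a geodesic to the nearest point of $M_i$, shows $|d\r_{M_\infty}|=1$ $\nu$-a.e., so the required inequality holds with equality in the limit.

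For the final assertion, suppose $\mathrm{spt}\,\phi\subset B_R(p_\infty)\setminus M_\infty$ with $\de_0=\mathrm{dist}(\mathrm{spt}\,\phi,M_\infty)>0$. Choose $\chi\in C^\infty(\R,[0,1])$ with $\chi\equiv 0$ on $(-\infty,\de_0/4]$ and $\chi\equiv 1$ on $[\de_0/2,\infty)$, and replace $\phi_i$ by $\widetilde\phi_i=\chi(\r_{M_i})\phi_i$. By~\eqref{rMixiMinfx}, for all large $i$ one has $\chi(\r_{M_i})\equiv 1$ on $\mathrm{spt}\,\phi$, so $(\widetilde\phi_i,d\widetilde\phi_i)\to(\phi,d\phi)$ is preserved with $\limsup_i\mathbf{Lip}\,\widetilde\phi_i\le\mathbf{Lip}\,\phi$, while $\mathrm{spt}\,\widetilde\phi_i\subset\{\r_{M_i}\ge\de_0/4\}$ is a compact subset of $B_R(p_i)\setminus M_i$. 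The main obstacle is the differential-convergence step: although Theorem~\ref{bzth*} transparently converts the derivative into a difference quotient, one must (a) control the constant $\th_*=\th_{n,\k r,\La_r,\ep}$ uniformly in $i$, which is exactly where the uniform volume bound~\eqref{BdMiBRipi0} is needed, and (b) execute a two-scale bootstrap in order to recover $\lan d\r_{M_\infty},d\r_{z_\infty}\ran$ from the limit of difference quotients on the Ricci limit space.
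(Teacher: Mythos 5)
Your architecture on the manifold side matches the paper: the uniform bound \eqref{BdMiBRipi0} makes the constant in Theorem \ref{bzth*} uniform in $i$, the angle estimate replaces $\lan d\r_{M_i},d\r_{z_i}\ran$ by difference quotients in $L^1$-average, the uniform convergence \eqref{rMixiMinfx} together with \eqref{nuinfty} passes those difference quotients to the limit, and Cheeger's Lemma 10.7 plus Honda's Theorem 4.2 and Theorem \ref{Honda} then give the stated identity; the cutoff $\chi(\r_{M_i})\phi_i$ for the compact-support refinement is also essentially the paper's construction. However, there is a genuine gap at the step you describe as ``a second application of Theorem \ref{bzth*} at a finer scale'' to identify the limit of the averaged difference quotients with $\fint_{B_r(x_\infty)}\lan d\r_{M_\infty},d\r_{z_\infty}\ran\,d\nu$. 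Theorem \ref{bzth*} is a statement about a smooth Riemannian manifold carrying a varifold of bounded mean curvature, and its proof uses harmonic replacement, the Bochner formula, Hessian estimates and the Heintze--Karcher comparison; none of this is available on the (possibly singular) Ricci limit space $B_R(p_\infty)$, and $M_\infty$ is not given as such a varifold there, so the theorem cannot be re-applied on the limit. Re-applying it at a finer scale on $B_R(p_i)$ only relates difference quotients at two scales on the manifolds and still does not connect anything to the Cheeger--Colding differential $d\r_{M_\infty}$ on the limit. What is actually needed, and what the paper uses, is Honda's Theorem 3.3 in \cite{H1}: for $\nu$-a.e.\ $x$ the radial difference quotient $t^{-1}\left(\r_{M_\infty}(\g_{x,q}(t))-\r_{M_\infty}(x)\right)$ converges as $t\to0$ to $\lan d\r_{M_\infty},d\r_q\ran(x)$; combined with Lusin's and Egoroff's theorems this makes the error $F_t$ uniformly small off a set of measure $\ep\nu(B_r(z))$, which is the missing ingredient in your argument.

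A secondary, smaller issue: for Honda's $L^2$ condition you assert $|d\r_{M_\infty}|=1$ $\nu$-a.e.\ ``by the same angle estimate,'' which again invokes Theorem \ref{bzth*} on the limit space. The correct justification is elementary but different: $\mathrm{Lip}\,\r_{M_\infty}(x)=1$ at every $x\notin M_\infty$ because the limit is a length space and $\r_{M_\infty}$ decreases at unit speed along a minimizing geodesic to $M_\infty$, Cheeger's theory gives $|d\r_{M_\infty}|=\mathrm{Lip}\,\r_{M_\infty}$ $\nu$-a.e., and $\nu(M_\infty)=0$ follows from the renormalized mass bound via the tubular-neighborhood volume estimate. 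Also, in the compact-support step you should record, as the paper does in \eqref{phiiphidd}, that $\sup|\phi_i|$ near $M_i$ and the averaged mass of $|d\phi_i|$ on $B_\de(M_i)$ tend to zero; this is what guarantees that multiplying by the cutoff neither destroys $(\phi_i,d\phi_i)\to(\phi,d\phi)$ nor inflates $\limsup_i\mathbf{Lip}\,\phi_i$ beyond $\mathbf{Lip}\,\phi$.
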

\begin{proof}
Let $\nu$ be the normalized measure obtained from $B_R(p_i)$ defined in \eqref{nuinfty}.
For any fixed $q\in B_R(p_\infty)$, there is a subset $Z_q$ in the set of restricted cut points of $q$ in $B_R(p_\infty)$ (see \cite{CCo1}) such that $\nu(B_{R}(p_\infty)\setminus Z_q)=0$, and two Lipschitz functions $\r_{M_\infty}$, $\r_q$ are differentiable on $Z_q$. We define a bounded measurable function $F(x,t)=F_t(x)$ for every $t\in[0,d(x,q)]$ and every $x\in B_R(p_\infty)$ with $|F_t|\le2$ a.e. by
\begin{equation}\aligned
F_t(x)=\lan d\r_{M_\infty},d\r_q\ran(x)-t^{-1}(\r_{M_\infty}(\g_{x,q}(t))-\r_{M_\infty}(x)).
\endaligned
\end{equation}
From Theorem 3.3 of \cite{H1}, we get $\lim_{t\rightarrow0}F_t(x)=0$ for each $x\in Z_q$.
From Lusin's theorem and Egoroff's theorem,
for any $\ep>0$, any $B_{2r}(z)\subset B_R(p_\infty)$, there exist a constant $0<\th<<\min\{\ep,r\}$, and a Borel subset $Z_q$ with $Z_q\subset Z_\ep\subset B_{r}(z)$ and $\nu(B_{r}(z)\setminus Z_\ep)<\ep\nu(B_{r}(z))$, such that
$|F_t(x)|<\ep$ for every $t\in(0,\th]$ and every $x\in Z_\ep$ with $\r_q(x)\ge2\th$.
Then for each $t\in(0,\th]$, we have
\begin{equation}\aligned\label{trMxqx}
\int_{B_{r}(z)}|F_t|d\nu
\le\int_{Z_\ep}|F_t|d\nu+2\nu(B_{r}(z)\setminus Z_\ep)
\le\ep\nu(Z_\ep)+2\ep\nu(B_{r}(z))\le3\ep\nu(B_{r}(z)).
\endaligned
\end{equation}
Let $q_i\in B_R(p_i)$ with $q_i\rightarrow q$, and $z_i\in B_R(p_i)$ with $z_i\rightarrow z$.
From Theorem \ref{bzth*}, there is a constant $\th_*=\th_{n,\k R,\La,\ep}$ depending on $n,\k R,\La,\ep$ such that
\begin{equation}\aligned\label{trMixqix}
\fint_{x_i\in B_{r}(z_i)}\left|\lan d\r_{M_i},d\r_{q_i}\ran(x_i)-\f{\r_{M_i}(\g_{x_i,q_i}(t))-\r_{M_i}(x_i)}{t}\right|
<2\f{\mathcal{H}^{n+1}\left(B_r(z_i)\cap B_{3t}(q_i)\right)}{\mathcal{H}^{n+1}(B_r(z_i))}+\ep
\endaligned
\end{equation}
for any $0<t\le \th_*r$.

From \eqref{rMixiMinfx}, there is an integer $i_t$ depending on $t$ such that for any $x\in B_r(z)$ and $i\ge i_t$
\begin{equation}\aligned
\left|\r_{M_i}(x_i)-\r_{M_\infty}(x)\right|<d(x,\Phi_i(x_i))+\f{t\ep}4.
\endaligned
\end{equation}
For the suitable small $t>0$, $x\in B_r(z)\cap Z_q$ and $i\ge i_t$, we have
\begin{equation}\aligned
\left|\r_{M_i}(\g_{x_i,q_i}(t))-\r_{M_\infty}(\g_{x,q}(t))\right|<d(\g_{x,q}(t),\Phi_i(\g_{x_i,q_i}(t)))+\f{t\ep}4<d(x,\Phi_i(x_i))+\f{t\ep}2.
\endaligned
\end{equation}
With the property of Radon measure $\nu$ defined in \eqref{nuinfty}, we have
\begin{equation}\aligned\label{BRzirMigt0MINF}
\bigg|\f1{\mathcal{H}^{n+1}(B_R(z_i))}\int_{x_i\in B_{r}(z_i)}&\left(\r_{M_i}(\g_{x_i,q_i}(t))-\r_{M_i}(x_i)\right)\\
&-\int_{x\in B_r(z)}\left(\r_{M_\infty}(\g_{x,q}(t))-\r_{M_\infty}(x)\right)d\nu\bigg|<t\ep
\endaligned
\end{equation}
for all $i\ge i_t$. Hence, for the suitable small $\th_*=\th_{n,\k R,\La,\ep}>0$, from \eqref{nuinfty} again and \eqref{BRzirMigt0MINF} there is an integer $i_t'\ge i_t$ depending on $t$ such that
\begin{equation}\aligned\label{gxqiq}
\left|\fint_{x_i\in B_{r}(z_i)}\left(\r_{M_i}(\g_{x_i,q_i}(t))-\r_{M_i}(x_i)\right)-\fint_{x\in B_r(z)}\left(\r_{M_\infty}(\g_{x,q}(t))-\r_{M_\infty}(x)\right)d\nu\right|<t\ep
\endaligned
\end{equation}
for each $t\in(0,\min\{\th,\th_* r\})$ and each $i\ge i_t'$.
Combining \eqref{trMxqx}\eqref{trMixqix}\eqref{gxqiq}, for $i\ge i_t'$ we have
\begin{equation}\aligned
&\left|\fint_{B_{r}(z_i)}\lan d\r_{M_i},d\r_{q_i}\ran-\int_{B_{r}(z)}\lan d\r_{M_\infty},d\r_q\ran d\nu\right|\\
\le&\fint_{x_i\in B_{r}(z_i)}\left|\lan d\r_{M_i},d\r_{q_i}\ran(x_i)-\f{\r_{M_i}(\g_{x_i,q_i}(t))-\r_{M_i}(x_i)}{t}\right|+\fint_{B_{r}(z)}|F_t|d\nu\\
+&\f1t\left|\fint_{x_i\in B_{r}(z_i)}\left(\r_{M_i}(\g_{x_i,q_i}(t))-\r_{M_i}(x_i)\right)-\fint_{x\in B_{r}(z)}\left(\r_{M_\infty}(\g_{x,q}(t))-\r_{M_\infty}(x)\right)d\nu\right|\\
\le&2\f{\mathcal{H}^{n+1}\left(B_{r}(z_i)\cap B_{3t}(q_i)\right)}{\mathcal{H}^{n+1}(B_r(z_i))}+\ep+3\ep+\ep.
\endaligned
\end{equation}
With \eqref{nuinfty}, letting $i\rightarrow\infty$ in the above inequality implies
\begin{equation}\aligned\label{rMirqirt5ep}
\lim_{i\rightarrow\infty}\left|\fint_{B_{r}(z_i)}\lan d\r_{M_i},d\r_{q_i}\ran-\fint_{B_r(z)}\lan d\r_{M_\infty},d\r_q\ran d\nu\right|
\le2\f{\nu\left(B_r(z)\cap B_{3t}(q)\right)}{\nu(B_r(z))}+5\ep.
\endaligned
\end{equation}
From Bishop-Gromov volume comparison, we clearly have $\lim_{t\to0}\nu\left(B_{3t}(q)\right)=0$.
Since $t,\ep$ can be chosen arbitrarily small, from \eqref{rMirqirt5ep} it follows that
\begin{equation}\aligned\label{irMirM}
\lim_{i\rightarrow\infty}\fint_{B_{r}(z_i)}\lan d\r_{M_i},d\r_{q_i}\ran=\fint_{B_{r}(z)}\lan d\r_{M_\infty},d\r_q\ran d\nu.
\endaligned
\end{equation}
Combining \eqref{rMixiMinfx}\eqref{irMirM}, we get $(\r_{M_i},d\r_{M_i})\to(\r_{M_\infty},d\r_{M_\infty})$ on $B_R(p_\infty)$. For each Lipschitz function $\phi$ on $B_R(p_\infty)$, from Lemma 10.7 in \cite{Ch} and Theorem 4.2 in \cite{H1}, there is a sequence of Lipschitz functions $\phi_i$ on $B_R(p_i)$
satisfying $(\phi_i,d\phi_i)\to(\phi,d\phi)$ on $B_R(p_\infty)$ and $\limsup_{i\rightarrow\infty}\mathbf{Lip}\,\phi_i\le\mathbf{Lip}\,\phi$.
From Theorem \ref{Honda}, we get
\begin{equation}\aligned
\lim_{i\rightarrow\infty}\fint_{B_R(p_{i})}\left\lan d\r_{M_{i}},d\phi_{i}\right\ran=\fint_{B_R(p_\infty)}\left\lan d\r_{M_\infty},d\phi\right\ran d\nu.
\endaligned
\end{equation}

We further suppose that $\phi$ has the compact support in $B_R(p_\infty)\setminus M_\infty$. Then there is a small constant $\de>0$ such that spt$\phi\cap B_{2\de}(M_\infty)=\emptyset$. From $(\phi_i,d\phi_i)\to(\phi,d\phi)$ on $B_R(p_\infty)$, we have
\begin{equation}\aligned\label{phiiphidd}
\lim_{i\rightarrow\infty}\left(\sup_{B_R(p_i)\cap B_{3\de/2}(M_i)}|\phi_i|+\f1{\mathcal{H}^{n+1}(B_R(p_i))}\int_{B_R(p_i)\cap B_\de(M_i)}|d\phi_i|\right)=0.
\endaligned
\end{equation}
Let $\e_{i,\de}$ be a Lipschitz function on $B_R(p_i)$ defined by $\e_{i,\de}=\f1{\de}\r_{M_i}$ on $B_\de(M_i)\cap B_R(p_i)$, and $\e_{i,\ep}=1$ on $B_R(p_i)\setminus B_\de(M_i)$. Set $\phi_i^*=\e_{i,\de}\phi_i$, then $\phi_i^*\in \mathrm{Lip}_0(B_R(p_i)\setminus M_i)$. $\phi_i\to\phi$ on $B_R(p_\infty)$ and spt$\phi\cap B_{2\de}(M_\infty)=\emptyset$ imply $\phi_i^*\to\phi$ on $B_R(p_\infty)$. Moreover, with \eqref{phiiphidd} it follows that 
\begin{equation}\aligned\label{phii***phii}
\limsup_{i\rightarrow\infty}\mathbf{Lip}\,\phi_i^*\le\limsup_{i\rightarrow\infty}\mathbf{Lip}\,\phi_i\le\mathbf{Lip}\,\phi.
\endaligned
\end{equation}
From
\begin{equation}\aligned
\left|d\phi_i^*-d\phi_i\right|=\left|(\e_{i,\de}-1)d\phi_i+\phi_id\e_{i,\de}\right|\le\chi_{_{B_R(p_i)\cap B_\de(M_i)}}\left(|d\phi_i|+\f1\de|\phi_i|\right),
\endaligned
\end{equation}
and \eqref{phiiphidd}, we get
\begin{equation}\aligned
\limsup_{i\rightarrow\infty}\fint_{B_R(p_{i})}\left|d\phi_i^*-d\phi_i\right|\le\limsup_{i\rightarrow\infty}\f1{\mathcal{H}^{n+1}(B_R(p_i))}\int_{B_R(p_i)\cap B_\de(M_i)}\left(|d\phi_i|+\f1\de|\phi_i|\right)=0.
\endaligned
\end{equation}
So we get $(\phi_i^*,d\phi_i^*)\to(\phi,d\phi)$ on $B_R(p_\infty)$. With \eqref{phii***phii},
we complete the proof.
\end{proof}
From \eqref{rMixiMinfx}\eqref{DeNirMi} and Theorem \ref{c1}, we immediately have the following corollary.
\begin{corollary}
Let $\phi$ be a nonnegative Lipschitz function on $B_R(p_\infty)$ with compact support in $B_R(p_\infty)\setminus M_\infty$, then 
\begin{equation}\aligned\label{drMdphige0}
\int_{B_R(p_\infty)}\left\lan d\r_{M_\infty},d\phi\right\ran d\nu\ge-n\k\int_{B_R(p_\infty)}\phi\tanh\left(\k\r_{M_\infty}\right)d\nu.
\endaligned
\end{equation}
\end{corollary}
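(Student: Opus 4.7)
The plan is to transport the distributional Laplacian comparison for $\r_{M_i}$ on the smooth approximants $B_R(p_i)$ to the limit space $B_R(p_\infty)$ using Theorem \ref{c1}. Since $\phi$ is Lipschitz with compact support in $B_R(p_\infty)\setminus M_\infty$, Theorem \ref{c1} provides a sequence of Lipschitz functions $\phi_i$ on $B_R(p_i)$ with compact support in $B_R(p_i)\setminus M_i$, satisfying $(\phi_i, d\phi_i)\to(\phi, d\phi)$ on $B_R(p_\infty)$, $\limsup_i \mathbf{Lip}\,\phi_i\le\mathbf{Lip}\,\phi$, and
\begin{equation*}
\lim_{i\rightarrow\infty}\fint_{B_R(p_{i})}\left\langle d\r_{M_{i}},d\phi_{i}\right\rangle=\fint_{B_R(p_\infty)}\left\langle d\r_{M_\infty},d\phi\right\rangle d\nu.
\end{equation*}
Moreover, by shrinking the support slightly one may arrange $\phi_i\ge 0$; a standard truncation (replace $\phi_i$ by $\max\{\phi_i,0\}$ and rescale via the cutoff $\e_{i,\de}$ as in the proof of Theorem \ref{c1}) preserves convergence of the differentials.

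Next, I apply the smooth-side inequality. Since $\phi_i\ge0$ is Lipschitz with compact support in $B_R(p_i)\setminus M_i$, the distributional Laplacian comparison \eqref{DeNirMi} gives
\begin{equation*}
\int_{B_R(p_i)}\left\langle d\r_{M_i},d\phi_i\right\rangle \ge -n\k\int_{B_R(p_i)} \phi_i\tanh\!\left(\k\r_{M_i}\right).
\end{equation*}
Dividing through by $\mathcal{H}^{n+1}(B_R(p_i))$, the left-hand side converges to $\fint_{B_R(p_\infty)}\langle d\r_{M_\infty},d\phi\rangle d\nu$ by Theorem \ref{c1}.

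For the right-hand side I need to show
\begin{equation*}
\lim_{i\to\infty}\fint_{B_R(p_i)}\phi_i\tanh(\k\r_{M_i}) = \fint_{B_R(p_\infty)}\phi\tanh(\k\r_{M_\infty})\,d\nu.
\end{equation*}
This follows because $\phi_i\to\phi$ uniformly on $B_R(p_\infty)$ (both sides being equibounded Lipschitz functions under the Gromov–Hausdorff approximations), while $\r_{M_i}\to\r_{M_\infty}$ uniformly by \eqref{rMixiMinfx}, so the product $\phi_i\tanh(\k\r_{M_i})$ converges uniformly to $\phi\tanh(\k\r_{M_\infty})$; combining with the definition of $\nu$ in \eqref{nuinfty}, the renormalized integrals converge. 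Multiplying through by $\nu(B_R(p_\infty))$ (equivalently, not renormalizing at infinity) yields the claimed inequality.

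The only technical point worth watching is the truncation that forces $\phi_i$ to have compact support in $B_R(p_i)\setminus M_i$ while keeping the differential convergence; this is precisely the second half of the proof of Theorem \ref{c1} (construction via $\e_{i,\de}$), so it carries over verbatim, and no genuinely new obstacle appears. The passage to the limit in the tanh term is the only other place where uniform convergence must be used carefully, but it is routine since $\tanh$ is $1$-Lipschitz.
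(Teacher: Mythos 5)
Your proposal is correct and follows essentially the same route as the paper, which deduces the corollary directly from Theorem \ref{c1} (providing $\phi_i\ge0$-type test functions with compact support in $B_R(p_i)\setminus M_i$ and convergence of the pairings), the smooth distributional comparison \eqref{DeNirMi}, and the uniform convergence $\r_{M_i}\to\r_{M_\infty}$ from \eqref{rMixiMinfx} together with \eqref{nuinfty} for the $\tanh$ term. The only point you add explicitly, the truncation $\max\{\phi_i,0\}$ to ensure nonnegativity, is a routine step the paper leaves implicit and does not affect the argument.
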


\section{Limiting cones from minimal hypersurfaces}

Let $R_i\ge0$ be a sequence with $R_i\rightarrow\infty$ as $i\rightarrow\infty$.
Let $B_{R_i}(p_i)$ be a sequence of $(n+1)$-dimensional smooth geodesic balls with Ricci curvature $\ge-nR_i^{-2}$ such that
$(B_{R_i}(p_i),p_i)$ converges to a metric cone $(\mathbf{C},\mathbf{o})$ in the pointed Gromov-Hausdorff sense.
Suppose $\liminf_{i\to\infty}\mathcal{H}^{n+1}(B_1(p_i))>0$, and $\mathbf{C}$ splits off a Euclidean factor $\R^{n-k}$ isometrically for some integer $1\le k\le n$.
Then the measure $\nu$ defined in \eqref{nuinfty} is just a multiple of the Hausdorff measure $\mathcal{H}^{n+1}$ and the volume convergence
\begin{equation}
\mathcal{H}^{n+1}(B_1(p_\infty))=\lim_{i\rightarrow\infty}\mathcal{H}^{n+1}(B_1(p_i))
\end{equation}
holds from Colding \cite{C}, Cheeger-Colding \cite{CCo1}.
Moreover, there is a $k$-dimensional compact metric space $X$ such that $\mathbf{C}=CX\times\R^{n-k}$, where $CX$ is a metric cone with the vertex $o$ of the cross section $X$.

Let $B_r(\mathbf{o})$ be the ball of radius $r$ and centered at $\mathbf{o}$ in $\mathbf{C}$,
and $B_r(q)$ be the ball of radius $r$ and centered at $q\in CX$ in $CX$.
For any $p_1,p_2\in CX\times\{z\}\subset\mathbf{C}$ for some $z\in\R^{n-k}$, any minimizing geodesic joining two points $p_1,p_2$ must live in $CX\times\{z\}$.
From Cheeger-Colding \cite{CC}, $\p B_1(\mathbf{o})$ is connected with the diameter $\le\pi$ (see also Abresch-Gromoll theorem \cite{AG}),
which implies that $X$ is connected and the diameter of $X$ $\le\pi$.
From Cheeger-Colding \cite{CC,CCo1}, $\mathbf{C}$ is a volume cone, i.e.,
\begin{equation}\aligned\label{RnuRrnur}
R^{-n-1}\mathcal{H}^{n+1}(B_R(\mathbf{o}))=r^{-n-1}\mathcal{H}^{n+1}(B_r(\mathbf{o}))\qquad \mathrm{for\ any}\ 0<r<R.
\endaligned
\end{equation}
Since $\mathbf{C}$ splits off a Euclidean factor $\R^{n-k}$ isometrically, then the co-area formula 
\begin{equation}\aligned\label{coarean+1CXR}
\mathcal{H}^{n+1}(B_R(\mathbf{o}))=\int_{B_R(0^{n-k})}\mathcal{H}^{k+1}\left(B_{\sqrt{R^2-|x|^2}}(o)\right)dx
\endaligned
\end{equation}
holds, which follows that
\begin{equation}\aligned\label{RrHk}
R^{-k-1}\mathcal{H}^{k+1}(B_R(o))=r^{-k-1}\mathcal{H}^{k+1}(B_r(o))\qquad \mathrm{for\ any}\ 0<r<R.
\endaligned
\end{equation}
For any compact set $K\subset CX$ and any $t>0$, let $tK$ denote a subset in $CX$ obtained by scaling of $K$ with the factor $t$ such that $\mathcal{H}^{k+1}(tK)=t^{k+1}\mathcal{H}^{k+1}(K)$.
In particular, $tX=\p B_t(o)$.
With covering technique and Bishop-Gromov volume comparison, the cone $CX$ implies the co-area formula (see Proposition 7.6 in \cite{H2} for instance)
\begin{equation}\aligned\label{coareanuX}
\mathcal{H}^{k+1}(K)=\int_0^\infty \mathcal{H}^{k}(\p B_t(o)\cap K)dt.
\endaligned
\end{equation}

For any $\xi,\e\in X$, $t,\tau>0$, $y,z\in\R^{n-k}$, we denote $t\xi,\tau\e\in CX$, $(t\xi,y)\in CX\times\R^{n-k}$, $(\tau\e,z)\in CX\times\R^{n-k}$ for convenience.
Let $d_X(\cdot,\cdot)$, $d_{CX}(\cdot,\cdot)$, $d_{\mathbf{C}}(\cdot,\cdot)$ denote the distance functions on both of $X$, $CX$ and $\mathbf{C}$, respectively.
Then
\begin{equation}\aligned\label{disXCX}
d_{CX}(t\xi,\tau\e)^2=t^2+\tau^2-2t\tau\cos d_X(\xi,\e),
\endaligned
\end{equation}
and
\begin{equation}\aligned\label{disbfC}
d_{\mathbf{C}}((t\xi,y),(\tau\e,z))=&d_{CX}(t\xi,\tau\e)^2+|y-z|^2\\
=&t^2+\tau^2-2t\tau\cos d_X(\xi,\e)+|y-z|^2.
\endaligned
\end{equation}
For any point $x\in X$ and $r,t>0$, let $\mathscr{B}_r(tx)$ denote the metric ball in $tX=\p B_t(o)$ with the radius $r$ and centered at $tx$.
With Bishop-Gromov volume comparison, there is a constant $c_k\ge1$ depending only on $k$ such that (see \eqref{VolDVRr*} in the Appendix II or Proposition 7.9 in \cite{H2} by Honda for instance)
\begin{equation}\aligned\label{VolDVRr}
\mathcal{H}^{k}(\mathscr{B}_{R}(x))\le c_k\f{R^k}{r^k}\mathcal{H}^{k}(\mathscr{B}_{r}(x))\qquad \mathrm{for\ each}\ 0<r\le R\le1.
\endaligned
\end{equation}

Let $\z$ be a Lipschitz function on $\R^{n-k}$, and $f$ be a Lipschitz function on $CX$.
We have introduced the differential $d(f\z)$ in $\S 2$. Obviously, $d(f\z)$ gives the differential of $f$ on $CX$,
where the differential of $f$, denoted by $df$, is a $\mathcal{H}^{k+1}$-a.e. well-defined $L_\infty$ section of $T^*(CX)$.
Moreover, there is a Borel set $V_*\subset CX$ such that $\mathcal{H}^{k+1}(CX\setminus V_*)=0$ and the differential $df:\,V_*\rightarrow T^*(CX)$ satisfies $d(f\z)=\z df+fd\z$ and $\mathrm{Lip}\,(f\z)=|d(f\z)|$ on $V_*$.

Let $\phi$ be a Lipschitz function on $X$, and $\la$ be a Lipschitz function on $[0,\infty)$. Now we suppose
$$f(t\xi)=\la(t)\phi(\xi)\qquad \mathrm{for\ each}\ t\xi\in CX.$$
Let us define the differential of $\phi$ on the cross section $X$ using $df$.
More precisely,
we can use $d\phi$ denoting the differential of $\phi$, which is a $\mathcal{H}^k$-a.e. well-defined $L_\infty$ section of $T^*X$.
There are Borel sets $V_X\subset X$, $\La_X\subset[0,\infty)$ with $V\triangleq\{t\xi\in CX|\ t\in\La_X,\, \xi\in V_X\}\subset V_*$ such that $\mathcal{H}^k(X\setminus V_X)=0$, $\mathcal{H}^1([0,\infty)\setminus\La_X)=0$ and
the differential $df:\,V\rightarrow T^*(CX)$ satisfies $df=\la d\phi+\phi\la'dt$ and $\mathrm{Lip}\,f=|df|$ on $V$.
From \eqref{Lipftxi**} in the Appendix II, for every $t\xi\in V$ we have
\begin{equation}\aligned\label{Lipftxi*}
(\mathrm{Lip}\, f)^2(t\xi)
=\f{\la^2(t)}{t^2}(\mathrm{Lip}\, \phi)^2(\xi)+(\la'(t))^2\phi^2(\xi)=\f{\la^2(t)}{t^2}|d\phi|^2(\xi)+(\la'(t))^2\phi^2(\xi).
\endaligned
\end{equation}

Let $\tilde{\phi}$ be a Lipschitz function on $X$, $\tilde{\la}$ be a Lipschitz function on $[0,\infty)$, $\tilde{f}(t\xi)=\tilde{\la}(t)\tilde{\phi}(\xi)$ for each $t\xi\in CX$.
Let $\lan \cdot,\cdot\ran_{CX}, \lan \cdot,\cdot\ran_X$ denote the pointwise inner products on $CX$ and $X$, respectively.
From \eqref{quad} and \eqref{Lipftxi*}, we have
\begin{equation}\aligned\label{ftf}
\lan df,d\tilde{f}\ran_{CX}=\f{\la\tilde{\la}}{t^2}\lan d\phi,d\tilde{\phi}\ran_X+\la'\tilde{\la}'\phi\tilde{\phi}\qquad \quad\mathcal{H}^{k+1}-a.e.\ \mathrm{on}\ CX.
\endaligned
\end{equation}
With \eqref{quad}\eqref{disbfC}\eqref{ftf}, it follows that
\begin{equation}\aligned\label{dfz2=}
|d(f\z)|^2=\z^2|df|^2+f^2|d\z|^2=\f{\la^2\z^2}{t^2}|d\phi|^2+(\la')^2\z^2\phi^2+\la^2\phi^2|d\z|^2
\endaligned
\end{equation}
$\mathcal{H}^{n+1}$-a.e. on $\mathbf{C}$.
Let $\tilde{\z}$ be a Lipschitz function on $\R^{n-k}$.
Let $\lan \cdot,\cdot\ran_{\mathbf{C}}$, $\lan \cdot,\cdot\ran_{\R^{n-k}}$ denote the pointwise inner products on $\mathbf{C}$ and $\R^{n-k}$, respectively.
From \eqref{ftf}\eqref{dfz2=}, we have
\begin{equation}\aligned\label{fzCfz}
&\lan d(f\z),d(\tilde{f}\tilde{\z})\ran_{\mathbf{C}}=\z\tilde{\z}\lan df,d\tilde{f}\ran_{CX}+f\tilde{f}\lan d\z,d\tilde{\z}\ran_{\R^{n-k}}\\
=&\f{\la\tilde{\la}\z\tilde{\z}}{t^2}\lan d\phi,d\tilde{\phi}\ran_X+\la'\tilde{\la}'\phi\tilde{\phi}\z\tilde{\z}+\la\tilde{\la}\phi\tilde{\phi}\lan d\z,d\tilde{\z}\ran_{\R^{n-k}}\qquad \quad\mathcal{H}^{n+1}-a.e.\ \mathrm{on}\ \mathbf{C}.
\endaligned
\end{equation}

From the Poincar$\mathrm{\acute{e}}$ inequality \eqref{qPoincare}, up to a choice of the constant $c_k$, we can obtain (see \eqref{*Brphi*} in the Appendix II for instance)
\begin{equation}\aligned\label{*Brphi}
\int_{\mathscr{B}_r(x)}\left|\phi-\fint_{\mathscr{B}_r(x)}\phi\right|^q\le c_{k}r^q\int_{\mathscr{B}_{r}(x)}|d\phi|^q
\endaligned
\end{equation}
for any $q\ge1$, any Lipschitz function $\phi$ on $X$.
From Theorem 1 in \cite{HaK}, the inequality \eqref{*Brphi} holds for any $q\ge1$ and $0<r\le\mathrm{diam} X$ up to a choice of the constant $c_k$.
\begin{remark}
From Theorem 6.10 in \cite{AGS} by Ambrosio-Gigli-Savar$\mathrm{\acute{e}}$ and Theorem 3.22 in \cite{EKS} by Erbar-Kuwada-Sturm, $(CX,d_{CX},\mathcal{H}^{k+1})$ is a metric measure space satisfying $\mathrm{RCD}^*(0,k+1)$. In \cite{K}, Ketterer proved that $X$ satisfies $\mathrm{RCD}^*(k-1,k)$. From Theorem 6.2 in \cite{BS} by Bacher-Sturm, the generalized Bishop-Gromov volume growth inequality holds:
\begin{equation}\aligned\label{RatioXVol}
\f{\mathcal{H}^k(B_r(x))}{\mathcal{H}^k(B_R(x))}\ge\f{\int_0^r\sin^k(t\sqrt{l/k})dt}{\int_0^R\sin^k(t\sqrt{l/k})dt}
\endaligned
\end{equation}
for any $0<r <R\le\pi\sqrt{k/l}$ with $l\in(0,k-1]$. Letting $l\to0$ in \eqref{RatioXVol} implies the constant $c_k=1$ in \eqref{VolDVRr}.
Moreover, Rajala \cite{R} proved that $X$ supports the Poincar$\mathrm{\acute{e}}$ inequality \eqref{*Brphi} for $q=1$.
\end{remark}

With \eqref{VolDVRr}\eqref{*Brphi}, there holds the Sobolev inequality on $X$ for Lipschitz functions (see Theorem 5.1 and line 5, page 84 both in \cite{HaK} by Hajlasz-Koskela for instance).
In particular, for each Lipschitz function $\phi$ on $X$ with compact support in $\mathscr{B}_r(x)\subset X$ we have
\begin{equation}\aligned\label{SobX}
\left(\fint_{\mathscr{B}_r(x)}|\phi|^{\f{k}{k-1}}\right)^{\f{k-1}k}\le c_k'r\fint_{\mathscr{B}_{r}(x)}|d\phi|,
\endaligned
\end{equation}
where $c_k'$ is a constant depending only on $k$ and $\mathcal{H}^k(X)$.
With the famous De Giorgi-Nash-Moser iteration, we get the following mean value inequality on metric balls in $X$(refer to Theorem 3.2 in \cite{D0}).
\begin{lemma}\label{MVIsuper}
Let $\phi$ be a nonnegative Lipschitz function on $\mathscr{B}_{2r}(x)\subset X$ satisfying
$$\int_{\mathscr{B}_{2r}(x)}\lan d\phi,d\e\ran_X\ge0$$
for any nonnegative Lipschitz function $\e$ with support in $\mathscr{B}_{2r}(x)$.
Then $\phi$ satisfies the mean value inequality as follows:
\begin{equation}\aligned
\fint_{\mathscr{B}_r(x)}\phi^{k_*}\le c_k^*\phi^{k_*}(x),
\endaligned
\end{equation}
where $k_*\in(0,1],c_k^*>1$ are constants depending only on $k$ and $\mathcal{H}^k(X)$.
\end{lemma}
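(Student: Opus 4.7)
The lemma is a De Giorgi--Nash--Moser style mean value inequality for weak superharmonic functions on $X$, and its essential ingredients are the Sobolev inequality \eqref{SobX}, the volume doubling \eqref{VolDVRr}, and the weak superharmonicity hypothesis. The strategy is to prove a weak Harnack inequality by Moser iteration, and then exploit the continuity of $\phi$ at the center $x$ to pass from an essential infimum bound to a pointwise bound.

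First I would regularize by setting $\psi=\phi+\delta$ for small $\delta>0$, which is strictly positive and remains weak superharmonic. For any $\alpha<0$, inserting the test function $\varepsilon=\eta^{2}\psi^{\alpha}$ (where $\eta$ is a Lipschitz cutoff supported in $\mathscr{B}_{2r}(x)$) into the hypothesis and applying Young's inequality yields the Caccioppoli estimate
\[
\int \eta^{2}\psi^{\alpha-1}|d\psi|^{2}\le \frac{4}{\alpha^{2}}\int \psi^{\alpha+1}|d\eta|^{2},
\]
which, for $u=\psi^{(\alpha+1)/2}$ (assuming $\alpha\neq -1$), rewrites as $\int \eta^{2}|du|^{2}\le C(\alpha)\int u^{2}|d\eta|^{2}$. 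Combining with an $L^{2}$-Sobolev inequality (upgraded from \eqref{SobX} by the standard iteration available in doubling Poincar\'e spaces) and choosing nested cutoffs produces the reverse H\"older inequality
\[
\left(\fint_{\mathscr{B}_{r'}(x)}\psi^{(\alpha+1)\chi}\right)^{1/\chi}\le \frac{Cr^{2}}{(r-r')^{2}}\fint_{\mathscr{B}_{r}(x)}\psi^{\alpha+1}
\]
for some $\chi>1$ depending only on $k$.

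Next I would run the Moser iteration with $\alpha+1=\gamma\chi^{j}$ for a fixed $\gamma<0$, so that every exponent remains negative and the Caccioppoli derivation stays valid at each step. Letting $j\to\infty$ and using $\lim_{j\to\infty}\bigl(\fint\psi^{\gamma\chi^{j}}\bigr)^{1/(\gamma\chi^{j})}=\mathrm{ess\,inf}_{\mathscr{B}_{r/2}(x)}\psi$ yields the lower bound $\mathrm{ess\,inf}_{\mathscr{B}_{r/2}(x)}\psi\ge c^{-1}\bigl(\fint_{\mathscr{B}_{r}(x)}\psi^{\gamma}\bigr)^{1/\gamma}$ for every $\gamma<0$. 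A Bombieri--Giusti/John--Nirenberg argument, whose input is the BMO bound on $\log\psi$ obtained from the borderline Caccioppoli case $\alpha=-1$, then bridges negative to small positive exponents, producing the weak Harnack inequality
\[
\left(\fint_{\mathscr{B}_{r}(x)}\psi^{k_{*}}\right)^{1/k_{*}}\le c\,\mathrm{ess\,inf}_{\mathscr{B}_{r/2}(x)}\psi
\]
for some $k_{*}\in(0,1]$ and $c>1$ depending only on $k$ and $\mathcal{H}^{k}(X)$. Since $\phi$ is Lipschitz and $x\in\mathscr{B}_{r/2}(x)$, we have $\mathrm{ess\,inf}_{\mathscr{B}_{r/2}(x)}\psi\le\psi(x)$; raising to the $k_{*}$ power and sending $\delta\to 0$ gives the stated mean value inequality. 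The main obstacle is executing the Moser iteration cleanly in the nonsmooth setting of $X$, which is essentially done as Theorem 3.2 of \cite{D0}; the content to verify here is that the analytic prerequisites (Sobolev, doubling, and the weak superharmonicity formulation via $\int\langle d\phi,d\varepsilon\rangle_{X}\ge 0$) are all available on $X$, which follows from the $\mathrm{RCD}^{*}(k-1,k)$ structure discussed in the remark preceding the lemma.
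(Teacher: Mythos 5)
Your proposal is correct and follows essentially the route the paper intends: the paper gives no detailed argument, simply invoking the De Giorgi--Nash--Moser iteration (citing Theorem 3.2 of \cite{D0}) on the basis of the doubling property \eqref{VolDVRr}, the Poincar\'e inequality \eqref{*Brphi} and the Sobolev inequality \eqref{SobX}, which is exactly what you flesh out via the Caccioppoli estimate for negative powers, the log-estimate with John--Nirenberg/Bombieri--Giusti crossing, and continuity of $\phi$ at the center to replace the essential infimum by $\phi(x)$.
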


For each integer $i\ge1$, let $M_i$ be the support of rectifiable stationary $n$-varifold in $B_{R_i}(p_i)$ with $p_i\in M_i$ such that
\begin{equation}\aligned
\limsup_{i\rightarrow\infty}\f{\mathcal{H}^n(M_i\cap B_2(p_i))}{\mathcal{H}^{n+1}(B_2(p_i))}<\infty.
\endaligned
\end{equation}
Suppose that $M_i$ converges in the induced Hausdorff sense to a metric cone $\mathbf{C}'\subset\mathbf{C}$ with the vertex at $\mathbf{o}$. Namely, there is an $\ep_i$-Gromov-Hausdorff approximation $\Phi_i:\,B_{r_i}(p_i)\to B_{r_i}(\mathbf{o})$ with $\Phi_i(p_i)=\mathbf{o}$ for some sequences $r_i\to\infty$ and $\ep_i\to0$ such that $\Phi_i(M_i\cap B_{r}(p_i))$ converges in the Hausdorff sense to $\mathbf{C}'\cap B_r(\mathbf{o})$ for any $r>0$. Note that $\mathbf{C}'$ may depend on the choice of $\Phi_i$.
We further suppose that $\mathbf{C}'$ splits off $\R^{n-k}$ isometrically and $\mathbf{C}'=CY\times\R^{n-k}$, where $CY\subset CX$ is a metric cone with the vertex at $o$ and the cross section $Y\subset X$.

Suppose that $\La$ is a positive constant such that 
$$\mathcal{H}^n(M_i\cap B_2(p_i))\le\La\mathcal{H}^{n+1}(B_2(p_i))\qquad \mathrm{for\ each}\ i\ge1.$$
Recalling that $B_{R_i}(p_i)$ has Ricci curvature $\ge-nR_i^{-2}$. Then
from Lemma 3.1 in \cite{D1},
\begin{equation*}\aligned
\mathcal{H}^{n+1}\left(B_t(M_i)\cap B_{s}(x_i)\right)\le\f{2t}{1-nR_i^{-1}t}\mathcal{H}^n\left(M_i\cap B_{t+s}(x_i)\right)\le\f{2\La t}{1-nR_i^{-1}t}\mathcal{H}^{n+1}(B_2(p_i))
\endaligned
\end{equation*}
for each $0<t\le\min\{\f{R_i}{n},s\}$ and $s+t<2-d(p_i,x_i)$. With Lemma 4.1 in \cite{D1}, we get
\begin{equation}\aligned
\mathcal{H}^{n+1}\left(B_t(\mathbf{C}')\cap B_{s}(x)\right)\le 2\La t\mathcal{H}^{n+1}\left(B_{2}(o)\right)
\endaligned
\end{equation}
for each $x\in B_2(\mathbf{o})$, each $0<t\le s$ with $s+t\le 2-d(\mathbf{o},\xi)$. Combining the co-area formulas \eqref{coarean+1CXR}\eqref{coareanuX}, there is a constant $\La_n\ge2\La$ depending only on $n,\La$ such that
\begin{equation}\aligned\label{VolBtY}
\mathcal{H}^{k}\left(B_t(Y)\right)\le \La_n t\mathcal{H}^{k}\left(X\right)\qquad \mathrm{for\ any}\ t>0.
\endaligned
\end{equation}

Let $\r_{\mathbf{C}'}=d(\cdot,\mathbf{C}')$ be the distance function from $\mathbf{C}'$ on $\mathbf{C}$, $\r_{CY}=d(\cdot,CY)$ be the distance function from $CY$ on $CX$, $\r_Y=d(\cdot,Y)$ be the distance function from $Y$ on $X$. From \eqref{disXCX} and the diameter of $X$ $\le\pi$, for any point $x\in X$, $t>0$, $z\in\R^{n-k}$
\begin{equation}\aligned\label{rCYrY}
&\r_{\mathbf{C}'}(tx,z)=\inf_{\tau y\in CY,z'\in\R^{n-k}}\sqrt{d_{CY}(tx,\tau y)^2+|z-z'|^2}=\inf_{\tau y\in CY}d_{CY}(tx,\tau y)\\
=&\inf_{\tau y\in CY}\sqrt{t^2+\tau^2-2t\tau\cos d(x,y)}=t\inf_{y\in Y}\sin d(x,y)=t\sin\r_Y(x).
\endaligned
\end{equation}
We will use $k$-dimensional Hausdorff measure as the volume element of integrations on $X$, $(k+1)$-dimensional Hausdorff measure for $CX$, and $(n+1)$-dimensional Hausdorff measure for $\mathbf{C}$. For convenience, we always omit the volume elements if there is no confusions.
\begin{lemma}\label{limsuper}
For any nonnegative Lipschitz function $\phi$ on $X$ with compact support in $X\setminus Y$, we have
\begin{equation}\aligned\label{dsinrYdphi}
\int_X\left\lan d\sin\r_Y,d\phi\right\ran_X\ge k\int_X\phi\sin\r_Y.
\endaligned
\end{equation}
Moreover, for any nonnegative Lipschitz function $\psi$ with compact support in $\mathscr{B}_{\pi/2}(Y)\setminus Y$, we have
\begin{equation}\aligned\label{drYdpsi}
\int_X\left\lan d\r_Y,d\psi\right\ran_X\ge(k-1)\int_X\psi\tan\r_Y.
\endaligned
\end{equation}
\end{lemma}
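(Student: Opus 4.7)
The plan is to encode both cross-section inequalities into the weak superharmonicity of $\rho_{\mathbf{C}'}$ on the ambient cone $\mathbf{C}$, by testing against separable Lipschitz functions built from a radial cutoff in $t$, an angular factor on $X$, and a cutoff on $\mathbb{R}^{n-k}$. First, for each fixed $\kappa>0$ the bound $\mathrm{Ric}_{N_i}\ge -nR_i^{-2}$ gives $\mathrm{Ric}_{N_i}\ge -n\kappa^2$ for all sufficiently large $i$, so Theorem \ref{c1} together with the corollary containing \eqref{drMdphige0} applies to the sequence $M_i\to\mathbf{C}'$. Hence for any nonnegative Lipschitz $\Phi$ on $\mathbf{C}$ with compact support in $\mathbf{C}\setminus\mathbf{C}'$,
\begin{equation*}
\int_{B_R(\mathbf{o})}\langle d\rho_{\mathbf{C}'},d\Phi\rangle_{\mathbf{C}}\,d\nu\ \ge\ -n\kappa\int_{B_R(\mathbf{o})}\Phi\tanh(\kappa\rho_{\mathbf{C}'})\,d\nu,
\end{equation*}
and letting $\kappa\to 0$, together with the fact that $\nu$ is a constant multiple of $\mathcal{H}^{n+1}$, yields $\int_{\mathbf{C}}\langle d\rho_{\mathbf{C}'},d\Phi\rangle_{\mathbf{C}}\,d\mathcal{H}^{n+1}\ge 0$.

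Next, given nonnegative $\phi\in\mathrm{Lip}(X)$ with compact support in $X\setminus Y$, pick nonnegative Lipschitz cutoffs $\lambda$ on $[0,\infty)$ with compact support in $(0,\infty)$ and $\zeta$ on $\mathbb{R}^{n-k}$ with compact support, and set $\Phi(t\xi,z)=\lambda(t)\phi(\xi)\zeta(z)$. Taking $\tilde\lambda(t)=t$, $\tilde\phi=\sin\rho_Y$, $\tilde\zeta\equiv 1$ in formula \eqref{fzCfz}, so that $\tilde f\tilde\zeta=\rho_{\mathbf{C}'}$ by \eqref{rCYrY}, one obtains
\begin{equation*}
\langle d\rho_{\mathbf{C}'},d\Phi\rangle_{\mathbf{C}}=\zeta(z)\!\left[\frac{\lambda(t)}{t}\langle d\sin\rho_Y,d\phi\rangle_X+\lambda'(t)\,\phi\sin\rho_Y\right]
\end{equation*}
$\mathcal{H}^{n+1}$-a.e.\ on $\mathbf{C}$. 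Since $d\mathcal{H}^{n+1}_{\mathbf{C}}=t^k\,dt\,d\mathcal{H}^k_X\,dz$ by \eqref{coarean+1CXR} and \eqref{coareanuX}, Fubini together with the superharmonicity above yields
\begin{equation*}
0\ \le\ \Big(\!\int\zeta\Big)\!\left[\Big(\!\int_0^\infty\! t^{k-1}\lambda\,dt\Big)\!\int_X\langle d\sin\rho_Y,d\phi\rangle_X+\Big(\!\int_0^\infty\! t^k\lambda'\,dt\Big)\!\int_X\phi\sin\rho_Y\right],
\end{equation*}
and the integration by parts $\int_0^\infty t^k\lambda'\,dt=-k\int_0^\infty t^{k-1}\lambda\,dt$ (legitimate because $\lambda$ vanishes at $0$ and at $\infty$), followed by division by the positive prefactors, produces \eqref{dsinrYdphi}.

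To obtain \eqref{drYdpsi}, apply \eqref{dsinrYdphi} with $\phi=\psi/\cos\rho_Y$, which is Lipschitz with compact support in $X\setminus Y$ since $\rho_Y\le\pi/2-\delta$ on the compact support of $\psi$ for some $\delta>0$. Using $d\sin\rho_Y=\cos\rho_Y\,d\rho_Y$, the Leibniz rule $d\phi=(\cos\rho_Y)^{-1}d\psi+\psi(\sin\rho_Y/\cos^2\rho_Y)\,d\rho_Y$, and the a.e.\ identity $|d\rho_Y|^2=1$ on $X$ (valid since, by the remark preceding the lemma, $X$ is $\mathrm{RCD}^*(k-1,k)$), one computes $\langle d\sin\rho_Y,d\phi\rangle_X=\langle d\rho_Y,d\psi\rangle_X+\psi\tan\rho_Y$ and $\phi\sin\rho_Y=\psi\tan\rho_Y$; substituting these into \eqref{dsinrYdphi} gives exactly \eqref{drYdpsi}. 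The main obstacles are the limit passage in the first step under the varying curvature bound $\kappa_i=R_i^{-1}\to 0$ (handled because any fixed $\kappa>0$ eventually dominates $\kappa_i$) and the a.e.\ identity $|d\rho_Y|^2=1$ on $X$, which rests on its RCD structure; everything else is a direct algebraic manipulation from \eqref{fzCfz}, \eqref{rCYrY}, and the coarea formulae.
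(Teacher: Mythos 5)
Your proposal is correct and follows essentially the same route as the paper: test the weak inequality \eqref{drMdphige0} for $\r_{\mathbf{C}'}$ with separable functions $\la(t)\phi(\xi)\z(z)$, compute the inner product via \eqref{fzCfz} and \eqref{rCYrY}, use the coarea factor $t^k\,dt$ and integration by parts in $t$, and then obtain \eqref{drYdpsi} by substituting $\phi=\psi/\cos\r_Y$ together with $\lan d\r_Y,d\r_Y\ran_X=1$ a.e. Your extra care in letting the curvature scale $\k\to0$ and in justifying $|d\r_Y|=1$ only makes explicit what the paper leaves implicit.
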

\begin{proof}
Let $\e$ be a nonnegative Lipschitz function on $\R$ with compact support in $[0,\infty)$.
Let $\z$ be a nonnegative Lipschitz function on $\R^{n-k}$ with compact support and $\sup_{\R^{n-k}}\z>0$.
For any nonnegative Lipschitz function $\phi$ on $X$ with compact support in $X\setminus Y$, we define a function $f$ on $CX$ by
$$f(tx,z)=\phi(x)\e(t)\z(z)\qquad \mathrm{for\ each}\ (tx,z)\in CX\times\R^{n-k}.$$
From \eqref{fzCfz}\eqref{rCYrY}, we have
\begin{equation}\aligned
&\left\lan d\r_{\mathbf{C}'},df\right\ran_\mathbf{C}(tx,z)=\left\lan d(t\sin\r_Y(x)),d(\phi(x)\e(t)\z(z))\right\ran_\mathbf{C}\\
=&\e'(t)\phi(x)\z(z)\sin\r_Y(x)+\f{\e(t)}t\z(z)\left\lan d\sin\r_Y(x),d\phi(x)\right\ran_{X}
\endaligned
\end{equation}
$\mathcal{H}^{n+1}$-a.e. on $\mathbf{C}$. From \eqref{drMdphige0}, we get
\begin{equation}\aligned\label{drXdvphi}
&0\le\int_{\mathbf{C}}\left\lan d\r_{\mathbf{C}'},df\right\ran=\int_{\R^{n-k}}\left(\int_0^\infty t^k\left(\int_{x\in X}\left\lan d\r_{\mathbf{C}'},df\right\ran_{\mathbf{C}}(tx,z)\right)dt\right)dz\\
=&\int_{\R^{n-k}}\z(z) dz\int_0^\infty t^k\e'(t)dt\int_X\phi\sin\r_Y+\int_{\R^{n-k}}\z(z) dz\int_0^\infty t^{k-1}\e(t)dt\int_X\left\lan d\sin\r_Y,d\phi\right\ran_X.
\endaligned
\end{equation}
Since $\e$ has compact support, then
\begin{equation}\aligned
\int_0^\infty t^{k}\e'(t)dt=\int_0^\infty t^{k}d\e(t)=-k\int_0^\infty t^{k-1}\e(t)dt.
\endaligned
\end{equation}
Noting $0<\int_{\R^{n-k}}\z(z) dz<\infty$. Substituting the above equality into \eqref{drXdvphi} gives \eqref{dsinrYdphi}.

Let $\psi$ be a nonnegative Lipschitz function on $X$ with compact support in $\mathscr{B}_{\pi/2}(Y)\setminus Y$. From \eqref{dsinrYdphi} with $\phi=\psi/\cos\r_Y$, we have
\begin{equation}\aligned
&k\int_{\mathscr{B}_{\pi/2}(Y)}\psi\tan\r_Y \le\int_{\mathscr{B}_{\pi/2}(Y)}\left\lan d\sin\r_Y,d(\psi/\cos\r_Y)\right\ran_X \\
=&\int_{\mathscr{B}_{\pi/2}(Y)}\left\lan d\r_Y,d\psi\right\ran_X +\int_{\mathscr{B}_{\pi/2}(Y)}\psi\cos\r_Y\left\lan d\r_Y,d(1/\cos\r_Y)\right\ran_X \\
=&\int_{\mathscr{B}_{\pi/2}(Y)}\left\lan d\r_Y,d\psi\right\ran_X +\int_{\mathscr{B}_{\pi/2}(Y)}\psi\tan\r_Y\left\lan d\r_Y,d\r_Y\right\ran_X ,
\endaligned
\end{equation}
which implies \eqref{drYdpsi} since $\left\lan d\r_Y,d\r_Y\right\ran_X=1$ $\mathcal{H}^k$-a.e. on $X$.
\end{proof}
With Lemma \ref{limsuper}, we have a basic property for $Y$ as follows.
\begin{proposition}\label{VolXpi}
$X\subset \overline{\mathscr{B}_{\pi/2}(Y)}$. Moreover, $\lim_{t\to0}\f1t\mathcal{H}^k(X\setminus\mathscr{B}_{\pi/2-t}(Y))=0$ for $k\ge2$.
\end{proposition}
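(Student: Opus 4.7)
The plan is to attack $X\subset\overline{\mathscr{B}_{\pi/2}(Y)}$ by contradiction using \eqref{dsinrYdphi} together with Lemma \ref{MVIsuper}. Suppose $M:=\max_X\r_Y>\pi/2$, attained at some $x_0$. Choose $r>0$ small enough that $\mathscr{B}_{2r}(x_0)\subset X\setminus Y$ and $M-2r>\pi/2$; on this ball $\r_Y\in(\pi/2,M]$, and strict monotonicity of $\sin$ on $(\pi/2,\pi]$ makes $w:=\sin\r_Y-\sin M$ a nonnegative Lipschitz function vanishing at $x_0$. Inequality \eqref{dsinrYdphi} shows $w$ is weakly superharmonic on $\mathscr{B}_{2r}(x_0)$, so Lemma \ref{MVIsuper} yields $\fint_{\mathscr{B}_r(x_0)}w^{k_*}\le c_k^*w^{k_*}(x_0)=0$. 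Hence $w\equiv 0$ on $\mathscr{B}_r(x_0)$ and thus $\r_Y\equiv M$ there by strict monotonicity of $\sin$. If $M<\pi$, then $d\sin\r_Y=0$ $\mathcal{H}^k$-a.e.\ on $\mathscr{B}_r(x_0)$, and testing \eqref{dsinrYdphi} against any nontrivial nonnegative $\eta\in\mathrm{Lip}_c(\mathscr{B}_r(x_0))$ gives $0\ge k\sin M\int\eta>0$, contradiction. If $M=\pi$, the same argument at every point of $\{\r_Y=\pi\}$ shows this set is open, hence clopen and (by connectedness of $X$) equal to $X$, contradicting the existence of any point of $Y\ne\emptyset$.

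For the decay statement, which requires $k\ge 2$, I would proceed in three stages. Stage A (integrability of $\tan\r_Y$): test \eqref{drYdpsi} against the Lipschitz cutoff $\psi_{\epsilon,\delta}:=\eta_\epsilon(\r_Y)\,b_\delta(\r_Y)$, where $\eta_\epsilon$ rises linearly from $0$ on $[0,\epsilon]$ to $1$ on $[2\epsilon,\infty)$ and $b_\delta$ drops linearly from $1$ on $[0,\pi/2-2\delta]$ to $0$ on $[\pi/2-\delta,\infty)$. Since $|d\r_Y|=1$ $\mathcal{H}^k$-a.e., the gradient integral equals $\int\eta_\epsilon'(\r_Y)b_\delta(\r_Y)+\int\eta_\epsilon(\r_Y)b_\delta'(\r_Y)$; the second integrand is nonpositive, while the first is bounded by $\frac{1}{\epsilon}\mathcal{H}^k(\mathscr{B}_{2\epsilon}(Y))\le 2\La_n\mathcal{H}^k(X)$ via \eqref{VolBtY}. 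Hence $(k-1)\int\psi_{\epsilon,\delta}\tan\r_Y\le 2\La_n\mathcal{H}^k(X)$, and monotone convergence as $\epsilon\to 0$ then $\delta\to 0$ (noting $\mathcal{H}^k(Y)=0$, again from \eqref{VolBtY}) yields the uniform bound $\int_{\{\r_Y<\pi/2\}}\tan\r_Y\le\frac{2\La_n}{k-1}\mathcal{H}^k(X)$, which is finite precisely because $k\ge 2$. Stage B (decay of the open tube): setting $V^\circ(t):=\mathcal{H}^k(\{\pi/2-t<\r_Y<\pi/2\})$, the layer-cake formula together with the substitutions $u=\tan s$ and $t=\pi/2-s$ converts Stage A into $\int_0^{\pi/2}V^\circ(t)\csc^2 t\,dt<\infty$, which since $\csc^2 t\sim t^{-2}$ near $0$ gives $\int_0^{\epsilon_0}V^\circ(t)/t^2\,dt<\infty$. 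Monotonicity of $V^\circ$ then furnishes $V^\circ(t/2)/t\le\int_{t/2}^t V^\circ(s)/s^2\,ds\to 0$ as $t\to 0$, so $V^\circ(t)/t\to 0$.

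Stage C (vanishing of the top level set): set $W_\infty:=\mathcal{H}^k(\{\r_Y=\pi/2\})$ and test \eqref{dsinrYdphi} against $\phi_t:=(\r_Y-(\pi/2-t))^+$, a valid element of $\mathrm{Lip}_c(X\setminus Y)$ for small $t$. Since $\cos\r_Y=0$ on $\{\r_Y=\pi/2\}$,
\begin{equation*}
\int_X\langle d\sin\r_Y,d\phi_t\rangle_X=\int_{\{\pi/2-t<\r_Y<\pi/2\}}\cos\r_Y\le\sin t\cdot V^\circ(t),
\end{equation*}
whereas the contribution of $\{\r_Y=\pi/2\}$ alone gives $\int_X\phi_t\sin\r_Y\ge t\cdot W_\infty$. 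The resulting inequality $\sin t\cdot V^\circ(t)\ge kt W_\infty$, divided by $t$ and sent to the limit $t\to 0$, forces $W_\infty=0$; combined with Stage B this yields $\mathcal{H}^k(X\setminus\mathscr{B}_{\pi/2-t}(Y))=V^\circ(t)+W_\infty=V^\circ(t)=o(t)$. The main technical obstacle is Stage A: the factor $(k-1)$ from \eqref{drYdpsi} is precisely what forces $k\ge 2$, and one has to carefully dispose of the unwanted cutoff contribution near $\{\r_Y\sim\pi/2\}$ (via its nonpositive sign) and control the cutoff contribution near $Y$ uniformly in $\epsilon$ (via \eqref{VolBtY}) in order to reach a usable finite integrability bound that drives Stages B and C.
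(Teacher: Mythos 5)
Your part (i) argument, as written, proves only that $\r_Y\le\pi/2$ on $X$: the hypothesis you contradict, $\max_X\r_Y>\pi/2$, is not the negation of $X\subset\overline{\mathscr{B}_{\pi/2}(Y)}$, since a point with $\r_Y=\pi/2$ need not be a limit of points of the open tube $\{\r_Y<\pi/2\}$. You still have to exclude a nonempty open set on which $\r_Y\equiv\pi/2$. The repair is immediate with your own tools: on such a ball $d\sin\r_Y=0$ $\mathcal{H}^k$-a.e.\ while $\sin\r_Y\equiv1$, so testing \eqref{dsinrYdphi} with a nontrivial nonnegative cutoff supported there gives $0\ge k\int\eta>0$, which is exactly your ``$M<\pi$'' step; alternatively, for $k\ge2$ it follows from your Stage C, since $W_\infty=0$ and nonempty open subsets of the noncollapsed cross section have positive $\mathcal{H}^k$-measure. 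A second, purely cosmetic, slip: $X\setminus\mathscr{B}_{\pi/2-t}(Y)=\{\r_Y\ge\pi/2-t\}$ also contains the level set $\{\r_Y=\pi/2-t\}$, so conclude via $\mathcal{H}^k(X\setminus\mathscr{B}_{\pi/2-t}(Y))\le V^\circ(2t)+W_\infty$ and $V^\circ(2t)/t\to0$. With these touch-ups the proposal is correct: the superharmonicity of $\sin\r_Y-\sin M$ and the use of Lemma \ref{MVIsuper}, the Stage A bound $(k-1)\int_{\{\r_Y<\pi/2\}}\tan\r_Y\le2\La_n\mathcal{H}^k(X)$ via \eqref{VolBtY} and the sign of the outer cutoff, the layer-cake plus monotonicity step, and the Stage C test with $(\r_Y-(\pi/2-t))^+$ all check out.

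The route is genuinely different from the paper's. The paper argues by integral cutoffs throughout: piecewise-linear radial test functions in \eqref{drYdpsi} give the annulus monotonicity \eqref{t1234*}, which combined with \eqref{VolBtY} yields the uniform bound \eqref{t432}; a cutoff peaking at $\pi/2$ in \eqref{dsinrYdphi} then kills $\{\r_Y>\pi/2\}$ in measure and gives the containment. For the decay, the paper sets $A(t,s)=\mathcal{H}^k(\mathscr{B}_{\pi/2-t}(Y)\setminus\mathscr{B}_{\pi/2-s}(Y))$ and extracts from \eqref{t1234} the doubling inequality $A(0,2t)\ge\f{17}8A(0,t)$ in \eqref{tAt2tcot}, whence $A(0,t)/t\to0$ with a geometric (indeed power) rate. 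You instead run a strong-maximum-principle argument at a maximum point of $\r_Y$ (the same mean-value device the paper reserves for Theorem \ref{ConnectedY}), and for the decay you replace the iteration by global integrability of $\tan\r_Y$, converted by layer cake into $\int_0 V^\circ(t)t^{-2}dt<\infty$ and hence $V^\circ(t)=o(t)$, treating the top level set $\{\r_Y=\pi/2\}$ by a separate test-function argument. Both proofs hinge on the factor $k-1>0$ from \eqref{drYdpsi}; the paper's iteration gives the slightly stronger quantitative rate, while your version delivers exactly the $o(t)$ the proposition asserts and is arguably more transparent about where $k\ge2$ enters.
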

\begin{proof}
Let $t_1,\cdots,t_4$ be constants satisfying $0<t_1<t_2\le t_3<t_4\le\pi/2$, and $\varphi$ be a Lipschitz function on $\R^+$ with support in $[t_1,t_4]$ satisfying $\varphi(t)=\f{t-t_1}{t_2-t_1}$ on $[t_1,t_2]$, $\varphi(t)=1$ on $[t_2,t_3]$, and $\varphi(t)=\f{t_4-t}{t_4-t_3}$ on $[t_3,t_4]$.
Denote $\psi=\varphi\circ\r_Y$. Then $d\psi=\varphi'd\r_Y$ $\mathcal{H}^k$-a.e. on $X$.
From \eqref{drYdpsi}, we have
\begin{equation}\aligned\label{t1234}
&(k-1)\int_X\psi\tan\r_Y\le\int_X\varphi'\left\lan d\r_Y,d\r_Y\right\ran_X\\
=&\f1{t_2-t_1}\mathcal{H}^k\left(\mathscr{B}_{t_2}(Y)\setminus{\mathscr{B}_{t_1}(Y)}\right)-\f1{t_4-t_3}\mathcal{H}^k\left(\mathscr{B}_{t_4}(Y)\setminus{\mathscr{B}_{t_3}(Y)}\right).
\endaligned
\end{equation}
In particular,
\begin{equation}\aligned\label{t1234*}
\f1{t_4-t_3}\mathcal{H}^k\left(\mathscr{B}_{t_4}(Y)\setminus{\mathscr{B}_{t_3}(Y)}\right)\le\f1{t_2-t_1}\mathcal{H}^k\left(\mathscr{B}_{t_2}(Y)\setminus{\mathscr{B}_{t_1}(Y)}\right).
\endaligned
\end{equation}
Combining \eqref{VolBtY}, it follows that
\begin{equation}\aligned\label{t432}
\f1{t_4-t_3}\mathcal{H}^k\left(\mathscr{B}_{t_4}(Y)\setminus{\mathscr{B}_{t_3}(Y)}\right)\le\f1{t_2}\mathcal{H}^{k}\left(\mathscr{B}_{t_2}(Y)\right)\le \La_n \mathcal{H}^{k}\left(X\right).
\endaligned
\end{equation}

For any fixed constants $0<t<\pi/2<s<\pi$, let $\tilde{\varphi}$ be a Lipschitz function on $\R^+$ with support in $[t,s]$ satisfying $\tilde{\varphi}(r)=\f{r-t}{\pi/2-t}$ on $[t,\pi/2]$, $\tilde{\varphi}(r)=\f{s-r}{s-\pi/2}$ on $[\pi/2,s]$.
We set $\tilde{\psi}=\tilde{\varphi}\circ\r_Y$, then from \eqref{dsinrYdphi}
\begin{equation}\aligned
&k\int_{\mathscr{B}_{s}(Y)\setminus\mathscr{B}_{t}(Y)}\tilde{\psi}\sin\r_Y\le k\int_X\tilde{\psi}\sin\r_Y\le\int_X\tilde{\varphi}'\left\lan d\sin\r_Y,d\r_Y\right\ran_X\\
\le&\f{\cos t}{\pi/2-t}\mathcal{H}^k\left(\mathscr{B}_{\pi/2}(Y)\setminus{\mathscr{B}_{t}(Y)}\right)-\f1{s-\pi/2}\int_{\mathscr{B}_{s}(Y)\setminus{\mathscr{B}_{\pi/2}(Y)}}\cos\r_Y.
\endaligned
\end{equation}
With \eqref{t432}, letting $t\to\pi/2$ in the above inequality implies $\mathcal{H}^k\left(\mathscr{B}_{s}(Y)\setminus{\mathscr{B}_{\pi/2}(Y)}\right)=0$ for any $\pi/2<s<\pi$, which infers
$$X\subset \overline{\mathscr{B}_{\pi/2}(Y)}.$$

For each $0\le t<s\le\pi/2$, we denote $A(t,s)=\mathcal{H}^k\left(\mathscr{B}_{\pi/2-t}(Y)\setminus{\mathscr{B}_{\pi/2-s}(Y)}\right)$. 
For every $t\in(0,\pi/4)$, from \eqref{t1234*} it follows that
\begin{equation}\aligned\label{At1.5tt}
A\left(t,3t/2\right)\ge\f12A\left(0,t\right).
\endaligned
\end{equation}
Let $t_1=\pi/2-2t$, $t_2=t_3=\pi/2-t$, $t_4=\pi/2$. 
Let $\varphi_t$ be a Lipschitz function on $\R^+$ with support in $[t_1,t_4]$ satisfying $\varphi_t(r)=\f{r-t_1}{t}$ on $[t_1,t_2]$, and $\varphi_t(r)=\f{t_4-r}{t}$ on $[t_3,t_4]$.
From \eqref{t1234} and \eqref{At1.5tt}, for $k\ge2$ we have
\begin{equation}\aligned\label{tAt2tcot}
&\f1tA\left(t,2t\right)\ge \f1tA\left(0,t\right)+\int_{\mathscr{B}_{\pi/2}(Y)\setminus{\mathscr{B}_{\pi/2-2t}(Y)}}\varphi_t\circ\r_Y\,\tan\r_Y\\
\ge& \f1tA\left(0,t\right)+\int_{\mathscr{B}_{\pi/2-t}(Y)\setminus{\mathscr{B}_{\pi/2-3t/2}(Y)}}\f{\r_Y-\pi/2+2t}t\tan\r_Y\\
\ge& \f1tA\left(0,t\right)+\f12\cot(3t/2)A\left(t,3t/2\right)\ge\f1tA\left(0,t\right)+\f14\cot(3t/2)A\left(0,t\right).
\endaligned
\end{equation}
There is a constant $t_0>0$ so that $\cot(3t/2)\ge\f1{2t}$ for all $0<t\le t_0$.
From \eqref{tAt2tcot}, we have
\begin{equation}\aligned
A\left(0,2t\right)= A\left(t,2t\right)+A(0,t)\ge A\left(0,t\right)+\f18A\left(0,t\right)+A(0,t)=\f{17}8A(0,t).
\endaligned
\end{equation}
Hence
\begin{equation}\aligned
\f{A(0,t)}t\le\f{16}{17}\f{A\left(0,2t\right)}{2t}\le\cdots\le\left(\f{16}{17}\right)^m\f{A\left(0,2^mt\right)}{2^mt}
\endaligned
\end{equation}
for all $2^mt\le t_0$ with the integer $m\ge1$, which implies $\lim_{t\to0}\f{A(0,t)}t=0$. This completes the proof.
\end{proof}
\begin{remark}
The integer $k$ in \eqref{dsinrYdphi} and the integer $k-1$ in \eqref{drYdpsi} are both sharp. 
Suppose that $X$ is a $k$-dimensional unit sphere $\mathbb{S}^k$ with the standard metric, $Y$ is the equator in $\mathbb{S}^k$, i.e., $Y=\{(x_1,\cdots,x_{k+1})\in\mathbb{S}^k|\, x_{k+1}=0\}$.
Then $\r_Y(x)=\arcsin|x_{k+1}|$ and $\pi/2-\r_Y(x)=\arccos|x_{k+1}|$ for any $x=(x_1,\cdots,x_{n+1})\in \mathbb{S}^k$. 
Let $p_\pm=(0,\cdots,0,\pm1)\in\mathbb{S}^k$. Then $\pi/2-\r_Y=\r_{p_+}$ on $\mathbb{S}_+^k=\{(x_1,\cdots,x_{k+1})\in\mathbb{S}^k|\, x_{k+1}>0\}$. By Laplacian formula, $\De_X\r_{p_+}=(k-1)\cot\r_{p_+}$ on $\mathbb{S}_+^k\setminus\{p_+\}$, which implies $\De_X\r_Y=-(k-1)\tan\r_Y$ on $\mathscr{B}_{\pi/2}(Y)\setminus Y$. Then
$$\De_X\sin\r_Y=\cos\r_Y\De_X\r_Y-\sin\r_Y=-k\sin\r_Y\qquad \mathrm{on}\ \mathscr{B}_{\pi/2}(Y)\setminus Y.$$
Moreover, $X=\mathscr{B}_{\pi/2}(Y)\cup\{p_+\}\cup\{p_-\}$, and $\mathcal{H}^k(X\setminus\mathscr{B}_{\pi/2-t}(Y))=2\mathcal{H}^k(\mathscr{B}_t(p_+))=2k\omega_k\int_0^t\sin^{k-1} sds$.
Then $\lim_{t\to0}\f1t\mathcal{H}^k(X\setminus\mathscr{B}_{\pi/2-t}(Y))=0$ if and only if $k>1$. Hence the integer $k\ge2$ is sharp in Proposition \ref{VolXpi}.
\end{remark}
Now let us prove a Frankel property on the  metric space $X$ in the following sense.
\begin{theorem}\label{ConnectedY}
For $k\ge2$, $Y$ is connected in $X$.
\end{theorem}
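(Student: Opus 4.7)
The plan is to argue by contradiction: assume $Y=Y_{1}\cup Y_{2}$ with nonempty closed disjoint $Y_{1},Y_{2}\subset X$, and show that $\sin\rho_Y$ must vanish on a whole ball centered at some $x_{0}\in Y_{1}$; this will make $Y_{1}$ a proper clopen subset of $X$, contradicting the connectedness of $X$ that was established at the start of \S 6 (from Cheeger--Colding / Abresch--Gromoll). The engine driving the vanishing is the mean value inequality of Lemma~\ref{MVIsuper}, applied to the weakly superharmonic function $u=\sin\rho_Y$.

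Since $X$ is compact and $Y_{1},Y_{2}$ are closed and disjoint, $\delta:=d_{X}(Y_{1},Y_{2})>0$. Fix $x_{0}\in Y_{1}$ and choose $0<r<\min\{\delta/4,\pi/4\}$. For every $x\in\mathscr{B}_{2r}(x_{0})$ one has $\rho_{Y_{1}}(x)\le d(x,x_{0})<2r$ while $\rho_{Y_{2}}(x)\ge \delta-\rho_{Y_{1}}(x)>2r$, so $\rho_Y=\rho_{Y_{1}}$ on $\mathscr{B}_{2r}(x_{0})$ and $\mathscr{B}_{2r}(x_{0})\cap Y=\mathscr{B}_{2r}(x_{0})\cap Y_{1}$. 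Consequently, for every nonnegative Lipschitz $\phi$ with compact support in $\mathscr{B}_{2r}(x_{0})\setminus Y_{1}\subset X\setminus Y$, Lemma~\ref{limsuper} gives
\begin{equation*}
\int_{X}\left\langle d\sin\rho_Y,d\phi\right\rangle_{X}\ge k\int_{X}\phi\sin\rho_Y\ \ge\ 0,
\end{equation*}
so $u:=\sin\rho_Y$ is weakly superharmonic on $\mathscr{B}_{2r}(x_{0})$ in the sense needed by Lemma~\ref{MVIsuper}. Applying that lemma at $x_{0}$, where $u(x_{0})=0$, produces
\begin{equation*}
\fint_{\mathscr{B}_{r}(x_{0})}u^{k_{*}}\le c_{k}^{*}u(x_{0})^{k_{*}}=0,
\end{equation*}
so $u\equiv 0$ on $\mathscr{B}_{r}(x_{0})$. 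Since $r<\pi$, the identity $\sin\rho_Y\equiv 0$ on $\mathscr{B}_{r}(x_{0})$ forces $\rho_Y\equiv 0$ there, i.e.\ $\mathscr{B}_{r}(x_{0})\subseteq Y_{1}$. Thus $Y_{1}$ has nonempty interior; being also closed, nonempty, and proper in $X$ (because $Y_{2}\ne\emptyset$ is disjoint from it), $Y_{1}$ becomes a proper clopen subset of $X$, contradicting connectedness.

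The main obstacle I foresee is verifying that $u$ satisfies the hypothesis of Lemma~\ref{MVIsuper} on the \emph{full} ball $\mathscr{B}_{2r}(x_{0})$ rather than only on $\mathscr{B}_{2r}(x_{0})\setminus Y_{1}$: indeed the Euclidean model $u(x)=\sin|x_{k}|$ on $\mathbb{R}^{k}$ shows that $-\Delta u$ carries a negative Dirac contribution supported on $\{u=0\}$, so the full distributional superharmonic inequality genuinely fails at $Y_{1}$. The fix I envisage is that the De Giorgi--Nash--Moser iteration underlying Lemma~\ref{MVIsuper} only uses test functions of the form $\eta^{2}u^{q}$ with $q>0$, each of which automatically vanishes on $\{u=0\}=Y_{1}\cap\mathscr{B}_{2r}(x_{0})$; the restricted weak superharmonicity established above is then sufficient to run the iteration. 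As a fallback, one may insert the Lipschitz cutoff $\eta_{s}:=\min\{\rho_{Y_{1}}/s,1\}$ inside each step and take $s\to 0$, using the volume bound $\mathcal{H}^{k}(\mathscr{B}_{s}(Y_{1}))\le \Lambda_{n}s\,\mathcal{H}^{k}(X)$ from \eqref{VolBtY} together with $\mathcal{H}^{k}(Y_{1})=0$ to control the boundary error produced by $d\eta_{s}$.
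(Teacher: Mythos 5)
There is a genuine gap at the central step, and it is not repairable by the fixes you sketch. Lemma \ref{limsuper} gives the superharmonic inequality for $u=\sin\r_Y$ only against test functions compactly supported in $X\setminus Y$, whereas Lemma \ref{MVIsuper} requires the inequality for \emph{all} nonnegative Lipschitz test functions supported in $\mathscr{B}_{2r}(x_0)$ — and you apply it centered at $x_0\in Y_1$, exactly where the missing part of the hypothesis matters. The distributional Laplacian of $\r_Y$ (hence of $\sin\r_Y$) carries a \emph{positive} singular measure concentrated on $Y$ (the analogue of the term $2\mathcal{H}^n\llcorner\Si$ in the measure $\mu_*$ of \S 3), so $u$ is genuinely not superharmonic across $Y_1$; you note this yourself, but the proposed remedies fail. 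First, the mean value inequality of Lemma \ref{MVIsuper} is a weak Harnack estimate (it bounds the average of $u^{k_*}$ by the value at the center, i.e.\ it bounds $u$ from below pointwise); its Moser-type proof tests with functions comparable to $\e^2(u+\ep)^{-p}$, $p>0$, and with $\e^2(u+\ep)^{-1}$ for the log/John--Nirenberg step — these are \emph{largest} on $\{u=0\}=Y_1$, not vanishing there. Test functions $\e^2u^q$ with $q>0$ only produce sup-bounds for subsolutions and cannot yield the pointwise lower bound you need. Second, inserting the cutoff $\e_s=\min\{\r_{Y_1}/s,1\}$ produces the error term $\f1s\int_{\mathscr{B}_s(Y_1)}\psi\,\lan d\sin\r_Y,d\r_{Y_1}\ran_X$, whose integrand is $\approx 1$ near $Y_1$; by \eqref{VolBtY} this is $O(1)$, not $o(1)$ — in fact it converges to a positive surface-type measure on $Y_1$ and enters with the unfavourable sign, so it cannot be discarded ($\mathcal{H}^k(Y_1)=0$ does not help). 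A decisive sanity check: your argument uses neither $k\ge2$ nor the disconnectedness of $Y$ in any essential way, so if it were valid it would show $\sin\r_Y\equiv0$ near every point of $Y$ for an arbitrary limit $Y$. Taking $X=\mathbb{S}^k$ and $Y$ the equator (the paper's own Remark after Proposition \ref{VolXpi}), $u=\sin\r_Y=|x_{k+1}|$ satisfies the restricted superharmonicity near any $x_0\in Y$ and $u(x_0)=0$, yet $u\not\equiv0$ on any ball; the same failure is visible already for $|x_k|$ on $\R^k$. So the conclusion $u\equiv0$ on $\mathscr{B}_r(x_0)$ simply does not follow from the information available.

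The paper avoids this trap by never centering the mean value inequality on $Y$: it takes $w$ to be the midpoint of a minimizing geodesic realizing $d_X(Y_1,Y_2)=\th$, so that a ball $\mathscr{B}_\de(w)$ is disjoint from $Y$; it decomposes $M_i$ into pieces converging to $CY_1\times\R^{n-k}$ and $CY_2\times\R^{n-k}$ to obtain the weak inequalities \eqref{BdewrZphitanZ}--\eqref{BdewrZphitanY*} for $\r_{Y_1}$ and $\r_{Y_2}$ \emph{separately}, valid there against all test functions; the function $\varphi=\r_{Y_1}+\r_{Y_2}-\r_{Y_1}(w)-\r_{Y_2}(w)$ is nonnegative by the choice of $w$, vanishes at $w$, and only then is Lemma \ref{MVIsuper} applied, the strict positivity of the $\tan$ terms (which needs $\th<\pi$, proved via Proposition \ref{VolXpi} — this is where $k\ge2$ enters) giving the contradiction. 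If you want to salvage your approach you would have to import these ingredients; as written, the key step fails.
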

\begin{proof}
Let us prove it by contradiction.
Suppose that there are two non-empty closed sets $Y_1,Y_2\subset Z$ with $Y=Y_1\cup Y_2$ and $Y_1\cap Y_2=\emptyset$.
Let us deduce the contradiction.
There are points $y_1\in Y_1,y_2\in Y_2$ such that
\begin{equation}\aligned\label{defy*z*}
d_X(y_1,y_2)=\inf_{y\in Y_1,y'\in Y_2}d_X(y,y')>0.
\endaligned
\end{equation}
Let $l_{y_1y_2}$ denote the shortest normalized geodesic connecting $y_1,y_2$. Let $w$ be a point in $l_{y_1y_2}$ such that
\begin{equation}\aligned\label{wy*z*}
d_X(w,y_1)=d_X(w,y_2)=\f12d_X(y_1,y_2).
\endaligned
\end{equation}
For any $x\in X\setminus Y$, the triangle inequality implies
\begin{equation}\aligned
&\inf_{y\in Y_1}d_X(x,y)+\inf_{y'\in Y_2}d_X(x,y')\ge\inf_{y\in Y_1,y'\in Y_2}d_X(y,y')= d_X(y_1,y_2)\\
=&d_X(w,y_1)+d_X(w,y_2)\ge\inf_{y\in Y_1}d_X(w,y)+\inf_{y'\in Y_2}d_X(w,y').
\endaligned
\end{equation}
In other words,
\begin{equation}\aligned\label{wY*Z*}
\r_{Y_1}(x)+\r_{Y_2}(x)\ge\r_{Y_1}(w)+\r_{Y_2}(w)\qquad \mathrm{for\ any}\ x\in X\setminus Y.
\endaligned
\end{equation}

From diam$X\le\pi$, let $\th$ be a positive constant denoted by
$$\th=d_X(y_1,y_2)\le\pi.$$
Let us prove $\th<\pi$ by contradiction. Assume $\th=\pi$.
Let $S_t=X\setminus \mathscr{B}_{\pi/2-t}(Y)$. Then $X=\mathscr{B}_{\pi/2-t}(Y_1)\cup \overline{S_t}\cup\mathscr{B}_{\pi/2-t}(Y_2)$.
Let $f_t$ be a Lipschitz function on $X$ defined by 
\begin{eqnarray*}
   f_t= \left\{\begin{array}{ccc}
           -\mathcal{H}^k(\mathscr{B}_{\pi/2-t}(Y_1))  & \quad\quad  {\rm{on}} \ \ \     \mathscr{B}_{\pi/2-t}(Y_2) \\ [3mm]
            \f{\r_{Y_2}-\pi/2}t\mathcal{H}^k(\mathscr{B}_{\pi/2-t}(Y_1))     & \quad\quad\ \ \ \ {\rm{on}} \ \ \  \mathscr{B}_{\pi/2}               (Y_2)\setminus\mathscr{B}_{\pi/2-t}(Y_2) \\ [3mm]
       0     & \quad\quad\ \ \ \ {\rm{on}} \ \ \  X\setminus \mathscr{B}_{\pi/2}(Y) \\ [3mm]
           \f{\pi/2-\r_{Y_1}}t\mathcal{H}^k(\mathscr{B}_{\pi/2-t}(Y_2))  & \quad\quad\ \ \ \ {\rm{on}} \ \ \   \mathscr{B}_{\pi/2}(Y_1)\setminus\mathscr{B}_{\pi/2-t}(Y_1)\\ [3mm]
         \mathcal{H}^k(\mathscr{B}_{\pi/2-t}(Y_2))& \quad\quad\ \ {\rm{on}} \ \ \  \mathscr{B}_{\pi/2-t}(Y_1).
     \end{array}\right.
\end{eqnarray*}
Since $\lim_{t\to0}\f1t\mathcal{H}^k(S_t)=0$ from Proposition \ref{VolXpi}, we have
\begin{equation}\aligned\label{Poinft0}
&\limsup_{t\to0}\left|\int_Xf_t\right|\le\limsup_{t\to0}\int_{S_t}|f_t|\le\mathcal{H}^k(X)\limsup_{t\to0}\mathcal{H}^k(S_t)=0,\\
&\limsup_{t\to0}\int_X|df_t|\le\limsup_{t\to0}\int_{S_t}\f{\mathcal{H}^k(X)}t\le\mathcal{H}^k(X)\limsup_{t\to0}\f{\mathcal{H}^k(S_t)}t=0,
\endaligned
\end{equation}
and
\begin{equation}\aligned\label{Poinft}
\lim_{t\to0}\int_X\left|f_t-\fint_Xf_t\right|=\lim_{t\to0}\int_X|f_t|=2\mathcal{H}^k(\mathscr{B}_{\pi/2}(Y_1))\mathcal{H}^k(\mathscr{B}_{\pi/2}(Y_2)).
\endaligned
\end{equation}
From Poincar\'e inequality \eqref{*Brphi}, the inequalities \eqref{Poinft0}\eqref{Poinft} can not hold simultaneously for the suitable small $t>0$. So we have $\th<\pi$.

Let $\de=\f14\min\{\th/2,1-\sin(\th/2)\}>0$.
By the definition of $M_i$, there are a sequence $\a_i\in(0,\de)$ with $\a_i\to0$, two sequences of closed subsets $M_i',M_i''$ of $M_i\setminus B_{\a_i}(p_i)$ so that $\Phi_i(M_i')$ converges to $CY_1\times\R^{n-k}$ and $\Phi_i(M_i'')$ converges to $CY_2\times\R^{n-k}$ as $i\to\infty$.
Let $\bar{w}=(1w,0^{n-k})\in CX\times\R^{n-k}=\mathbf{C}$, and $w_i\in \p B_1(p_i)$ with $w_i\to \bar{w}$, then
from \eqref{rCYrY} it follows that
\begin{equation}\aligned
B_{\sin(\th/2)+3\de}(w_i)\cap B_{\a_i}(p_i)=\emptyset\qquad \mathrm{for\ the\ suitable\ large}\ i.
\endaligned
\end{equation}
Hence, by the definition of $M_i$ again, both of $M'_i\cap B_{\sin(\th/2)+3\de}(w_i)$ and $M''_i\cap B_{\sin(\th/2)+3\de}(w_i)$ can be written as the supports of rectifiable stationary $n$-varifolds in $B_{\sin(\th/2)+3\de}(w_i)$.
By the definition of $\th$, clearly $M'_i\cap B_{\sin(\th/2)+\de}(w_i)\neq\emptyset$ and $M''_i\cap B_{\sin(\th/2)+\de}(w_i)\neq\emptyset$ for suitable large $i$.
Since $\de\le\th/8$, then $\sin(\th/2)>\th/\pi>2\de$.
Combining Theorem \ref{c1} and \eqref{drMdphige0}, we get
\begin{equation}\aligned
\int_{B_{2\de}(\bar{w})}\left\lan d\r_{CY_1\times\R^{n-k}},d\psi\right\ran \ge0
\endaligned
\end{equation}
for every nonnegative Lipschitz function $\psi$ on $B_{2\de}(\bar{w})$ with compact support in $B_{2\de}(\bar{w})$.
With the proof of Lemma \ref{limsuper}, we deduce
\begin{equation}\aligned\label{BdewrZphitanZ}
\int_{\mathscr{B}_{\de}(w)}\left\lan d\r_{Y_1},d\phi\right\ran_X\ge(k-1)\int_{\mathscr{B}_{\de}(w)}\phi\tan\r_{Y_1}
\endaligned
\end{equation}
for any nonnegative Lipschitz function $\phi$ with compact support in $\mathscr{B}_{\de}(w)$. Analogously, we have
\begin{equation}\aligned\label{BdewrZphitanY*}
\int_{\mathscr{B}_{\de}(w)}\left\lan d\r_{Y_2},d\phi\right\ran_X \ge(k-1)\int_{\mathscr{B}_{\de}(w)}\phi\tan\r_{Y_2}.
\endaligned
\end{equation}
Adding \eqref{BdewrZphitanZ}\eqref{BdewrZphitanY*}, we get
\begin{equation}\aligned\label{rY*+Z*}
\int_{\mathscr{B}_{\de}(w)}\left\lan d(\r_{Y_1}+\r_{Y_2}),d\phi\right\ran_X\ge(k-1)\int_{\mathscr{B}_{\de}(w)}\phi(\tan\r_{Y_1}+\tan\r_{Y_2}).
\endaligned
\end{equation}

Let $\varphi(x)=\r_{Y_1}(x)+\r_{Y_2}(x)-\r_{Y_1}(w)-\r_{Y_2}(w)$ for any $x\in\mathscr{B}_{\de}(w)$, then $\varphi(w)=0$ and $\varphi\ge0$ on $\mathscr{B}_{\de}(w)$ from \eqref{wY*Z*}.
From \eqref{rY*+Z*}, it follows that
\begin{equation}\aligned
\int_{\mathscr{B}_{\de}(w)}\left\lan d\varphi,d\phi\right\ran_X\ge0
\endaligned
\end{equation}
for any nonnegative Lipschitz function $\phi$ with compact support in $\mathscr{B}_{\de}(w)$.
From Lemma \ref{MVIsuper} and $\varphi(w)=0$, we get $\varphi\equiv0$ on $\mathscr{B}_{\de/2}(w)$. However, this contradicts to \eqref{rY*+Z*}. We complete the proof.
\end{proof}

\section{Appendix I}

For each integer $i\ge1$, let $(X_i,d_i)$ be a sequence of compact metric spaces. Let $(X_\infty,d_\infty)$ be a compact metric space such that
there is a sequence of $\ep_i$-Gromov-Hausdorff approximations $\Phi_i:\, X_i\to X_\infty$ for some sequence $\ep_i\to0$.
For each $i$, let $f_i$ be a function on $X_i$. For a function $f$ on $X_\infty$, we say $f_i\rightarrow f$ if $f_i(x_i)\rightarrow f(x)$ for any $x\in X_\infty$ and any sequence $x_i\in X_i$ with $x_i\rightarrow x$.
We further assume that all the $f_i$ are Lipschitz with $\limsup_{i\rightarrow\infty}(\sup_{X_i}|f_i|+\mathbf{Lip} f_i)<\infty$.
\begin{lemma}\label{c00}
There are a subsequence $i'\to\infty$ and a Lipschitz function $f_\infty$ on $X_\infty$ with $\mathbf{Lip} f_\infty\le\limsup_{i\rightarrow\infty}\mathbf{Lip} f_i$ such that $f_{i'}\to f_\infty$.
\end{lemma}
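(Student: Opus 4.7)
The plan is a standard Arzel\`a--Ascoli/diagonal extraction adapted to the Gromov--Hausdorff setting. Set $L=\limsup_{i\to\infty}\mathbf{Lip}\,f_i$ and $M=\limsup_{i\to\infty}\sup_{X_i}|f_i|$, both finite by hypothesis; after passing to a tail of the sequence I may assume $\mathbf{Lip}\,f_i\le L+1$ and $\sup_{X_i}|f_i|\le M+1$ for every $i$. I would fix a countable dense subset $\{y_k\}_{k\ge 1}\subset X_\infty$ and for each pair $(i,k)$ choose $y_{i,k}\in X_i$ with $d_\infty(\Phi_i(y_{i,k}),y_k)<\ep_i$, which is possible by the $\ep_i$-Gromov--Hausdorff approximation property of $\Phi_i$; in particular $y_{i,k}\to y_k$ in the sense of the paper's convention.

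For each fixed $k$ the scalar sequence $(f_i(y_{i,k}))_i$ is bounded by $M+1$, so a Cantor diagonal argument yields a subsequence $i'\to\infty$ along which $f_{i'}(y_{i',k})\to a_k$ for every $k$. Define $f_\infty(y_k):=a_k$. To verify that $f_\infty$ is Lipschitz on the dense set $\{y_k\}$, I would use the $\ep_{i'}$-GH property, which gives $|d_{i'}(y_{i',k},y_{i',l})-d_\infty(y_k,y_l)|\to 0$ for every $k,l$, and combine it with $|f_{i'}(y_{i',k})-f_{i'}(y_{i',l})|\le\mathbf{Lip}\,f_{i'}\cdot d_{i'}(y_{i',k},y_{i',l})$; passage to the limit yields $|a_k-a_l|\le L\,d_\infty(y_k,y_l)$. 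Density then permits $f_\infty$ to be extended uniquely to a Lipschitz function on all of $X_\infty$ with $\mathbf{Lip}\,f_\infty\le L$.

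For the convergence claim, given $x\in X_\infty$ and $x_{i'}\in X_{i'}$ with $x_{i'}\to x$ (meaning $\Phi_{i'}(x_{i'})\to x$), I would pick for any $\de>0$ an index $k$ with $d_\infty(y_k,x)<\de$ and split
\begin{equation*}
|f_{i'}(x_{i'})-f_\infty(x)|\le|f_{i'}(x_{i'})-f_{i'}(y_{i',k})|+|f_{i'}(y_{i',k})-a_k|+|a_k-f_\infty(x)|.
\end{equation*}
The first term is $\le(L+1)d_{i'}(x_{i'},y_{i',k})$, which tends to $(L+1)d_\infty(x,y_k)<(L+1)\de$ by the $\ep_{i'}$-GH property, the second tends to $0$ by construction of the diagonal subsequence, and the third is $\le L\de$ by Lipschitzness of $f_\infty$. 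Sending $i'\to\infty$ and then $\de\to 0$ yields $f_{i'}(x_{i'})\to f_\infty(x)$. The only genuine technical content is the routine metric fact that $d_i(u_i,v_i)\to d_\infty(u,v)$ whenever $u_i\to u$ and $v_i\to v$, which follows directly from the definition of an $\ep_i$-GH approximation; this is the main (but essentially elementary) obstacle.
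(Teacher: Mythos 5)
Your argument is correct and is essentially the paper's own proof: a Cantor diagonal extraction on a countable dense subset of $X_\infty$ (the paper uses the union of finite $\de_k$-nets), transfer of the uniform Lipschitz bound to the limit values, extension by density, and the same three-term triangle-inequality estimate to verify $f_{i'}(x_{i'})\to f_\infty(x)$ for arbitrary converging sequences. No substantive differences.
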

\begin{proof}
Let $\de_k>0$ be a sequence of numbers with $\de_k\rightarrow0$ as $k\rightarrow\infty$. For each $k$, let $\{y_{k,j}\}_{j=1}^{m_k}$ be a finite $\de_k$-net of $X_\infty$.
For each $j\in\{1,\cdots,m_1\}$, let $x_{i,j}\in X_i$ be a sequence converging to $y_{1,j}$ as $i\rightarrow\infty$. Then there is a subsequence $\{1_i\}$ of $\{i\}$ such that $f_{1_i}(x_{1_i,j})$ converges to a number $t_{1,j}\in\R$.
For each $j\in\{1,\cdots,m_2\}$ and for any sequence $x_{1_i,j}\in X_{1_i,j}$ converging as $i\rightarrow\infty$ to $y_{2,j}$, there is a subsequence $\{2_i\}$ of $\{1_i\}$ such that $f_{2_i}(x_{2_i,j})$ converges to a number $t_{2,j}\in\R$.
We continue the procedure, and get a sequence of functions $\{f_{k_i}\}_{i,k\ge1}$ such that $\{(k+1)_i\}$ is a subsequence of $\{k_i\}$,
and for any $y_{k,j}$ and $X_{k_i}\ni x_{k_i,j}\rightarrow y_{k,j}$,
$\lim_{i\rightarrow\infty}f_{k_i}(x_{k_i,j})$ converges to $t_{k,j}\in\R$ as $i\to\infty$.

From $\limsup_{i\rightarrow\infty}\mathbf{Lip} f_i<\infty$, for $y_{k,j}=y_{k',j'}$ there holds $t_{k,j}=t_{k',j'}$.
Now we define a function $f_{\infty}$ on $\cup_{k\ge1}\cup_{1\le j\le m_k}\{y_{k,j}\}$ by letting $f_\infty(y_{k,j})=t_{k,j}$.
From $\limsup_{i\rightarrow\infty}\mathbf{Lip} f_i<\infty$, for any sequence $X_{k_i}\ni x_{k_i,j}'\rightarrow y_{k,j}$ we have 
$$\lim_{i\rightarrow\infty}f_{k_i}(x_{k_i,j}')=\lim_{i\rightarrow\infty}f_{k_i}(x_{k_i,j})=f_{\infty}(y_{k,j}).$$
Moreover, for any $y,y'\in\cup_{k\ge1}\cup_{1\le j\le N_k}\{y_{k,j}\}$ it follows that
$$|f_{\infty}(y)-f_{\infty}(y')|\le\left(\limsup_{i\rightarrow\infty}\mathbf{Lip} f_i\right) d_\infty(y,y').$$
For any $y\in X_\infty$, there is a sequence $\{y_i\}\subset\cup_{k\ge1}\cup_{1\le j\le N_k}\{y_{k,j}\}$ with $y_i\rightarrow y$, and we define $f_\infty(y)=\lim_{i\rightarrow\infty}f_\infty(y_i)$.
From the above inequality, the definition of the function $f_\infty(y)$ is independent of the particular choice of $y_i$. 
Then $\mathbf{Lip} f_\infty\le\limsup_{i\rightarrow\infty}\mathbf{Lip} f_i$, $f_\infty(y)=\lim_{i\to\infty}f_{i_i}(y_i')$ for any $y_i'\in X_{i_i}$ with $y_i'\rightarrow y$.
\end{proof}

For each integer $i\ge1$, let $E_i$ be a closed set in $X_i$. Suppose that $\Phi_i(E_i)$ converges in the Hausdorff sense to a closed set $E_\infty$ in $X_\infty$.
From the triangle inequality, one has
\begin{equation}\aligned
|\r_{E_\infty}(x)-\r_{E_\infty}(y)|\le d_\infty(x,y)\qquad\qquad \mathrm{for\ any}\ x,y\in X_\infty.
\endaligned
\end{equation}
In other words, $\r_{E_\infty}$ has the Lipschitz constant $\mathbf{Lip}\,\r_{E_\infty}\le 1$ on $X_\infty$.

\begin{lemma}\label{c0}
For any point $x\in X_\infty$ and any sequence $x_i\in X_i$ with $x_i\to x$ as $i\to\infty$, we have $\r_{E_i}(x_i)\rightarrow\r_{E_\infty}(x)$.
\end{lemma}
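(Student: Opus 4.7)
The plan is to establish the convergence $\rho_{E_i}(x_i)\to \rho_{E_\infty}(x)$ via two one-sided estimates, using only the definitions of $\varepsilon_i$-Gromov--Hausdorff approximation (namely $|d_\infty(\Phi_i(a),\Phi_i(b))-d_i(a,b)|\le\varepsilon_i$ for all $a,b\in X_i$ and $\Phi_i(X_i)$ is $\varepsilon_i$-dense) together with the Hausdorff convergence $\Phi_i(E_i)\to E_\infty$, which provides a sequence $\delta_i\to 0$ such that $\Phi_i(E_i)\subset B_{\delta_i}(E_\infty)$ and $E_\infty\subset B_{\delta_i}(\Phi_i(E_i))$. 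Recall also that $x_i\to x$ means $d_\infty(\Phi_i(x_i),x)\to 0$. We may assume $E_\infty\neq\emptyset$ (hence $E_i\neq\emptyset$ for large $i$), otherwise the statement is vacuous.

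For the upper bound $\limsup_{i\to\infty}\rho_{E_i}(x_i)\le \rho_{E_\infty}(x)$, I would fix $\eta>0$ and select $z\in E_\infty$ with $d_\infty(x,z)<\rho_{E_\infty}(x)+\eta$. The Hausdorff convergence then yields $y_i\in E_i$ with $d_\infty(\Phi_i(y_i),z)\le\delta_i$, and the triangle inequality together with the $\varepsilon_i$-approximation property gives
\begin{equation*}
\rho_{E_i}(x_i)\le d_i(x_i,y_i)\le d_\infty(\Phi_i(x_i),\Phi_i(y_i))+\varepsilon_i\le d_\infty(\Phi_i(x_i),x)+d_\infty(x,z)+\delta_i+\varepsilon_i.
\end{equation*}
Letting $i\to\infty$ and then $\eta\to 0$ delivers the estimate.

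For the lower bound $\liminf_{i\to\infty}\rho_{E_i}(x_i)\ge \rho_{E_\infty}(x)$, I would choose $y_i\in E_i$ with $d_i(x_i,y_i)<\rho_{E_i}(x_i)+1/i$; by the Hausdorff hypothesis there exists $z_i\in E_\infty$ with $d_\infty(z_i,\Phi_i(y_i))\le\delta_i$. Then
\begin{equation*}
\rho_{E_\infty}(x)\le d_\infty(x,z_i)\le d_\infty(x,\Phi_i(x_i))+d_i(x_i,y_i)+\varepsilon_i+\delta_i\le d_\infty(x,\Phi_i(x_i))+\rho_{E_i}(x_i)+1/i+\varepsilon_i+\delta_i,
\end{equation*}
and passing to the $\liminf$ yields the desired inequality. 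Combining the two bounds proves the lemma.

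This argument is essentially forced by the definitions and no real obstacle is expected; the only mild subtlety is in the lower bound, where the infimum defining $\rho_{E_i}(x_i)$ need not be attained, which is why I introduce the $1/i$ slack. Compactness of $\overline{B_{R+R'}(p_i)}$ would permit taking actual minimizers, but the slack approach is cleaner and requires no additional hypothesis beyond what is already stipulated.
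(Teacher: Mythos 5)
Your proof is correct and follows essentially the same route as the paper: two one-sided estimates obtained by transporting (near-)minimizers between $E_i$ and $E_\infty$ via the Gromov--Hausdorff approximations and the Hausdorff convergence. The only difference is cosmetic: the paper uses attained minimizers and a subsequence extraction by compactness, while you replace this with the $\eta$ and $1/i$ slacks and explicit $\varepsilon_i,\delta_i$ bookkeeping, which is equally valid.
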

\begin{proof}
The proof is routine. For each $x\in X_\infty$, there is a point $y\in E_\infty$ so that $d_\infty(x,y)=\r_{E_\infty}(x)$.
Let $y_i\in E_i$ with $y_i\rightarrow y$ . Then for any sequence $x_i\in X_i$ with $x_i\rightarrow x$, there holds
\begin{equation}\aligned\label{suprMixi}
\r_{E_\infty}(x)=d_\infty(x,y)=\lim_{i\rightarrow\infty}d_i(x_i,y_i)\ge\limsup_{i\rightarrow\infty}\r_{E_i}(x_i).
\endaligned
\end{equation}
On the other hand, there is a point $z_i\in E_i$ so that $\r_{E_i}(x_i)=d_i(x_i,z_i)$. Suppose there is a sequence $i'\to\infty$ such that $\liminf_{i\rightarrow\infty}\r_{E_i}(x_i)=\lim_{i'\rightarrow\infty}\r_{E_{i'}}(x_{i'})$.
Then there is a subsequence $i''$ of $i'$ such that $z_{i''}\rightarrow z\in E_\infty$.
Hence
\begin{equation}\aligned\label{infrMixi}
\liminf_{i\rightarrow\infty}\r_{E_i}(x_i)=\lim_{i'\rightarrow\infty}d_{i'}(x_{i'},z_{i'})=\lim_{i''\rightarrow\infty}d_{i''}(x_{i''},z_{i''})=d_\infty(x,z)\ge\r_{E_\infty}(x).
\endaligned
\end{equation}
Combining \eqref{suprMixi}\eqref{infrMixi}, we complete the proof.
\end{proof}

\section{Appendix II}

Let $R_i\ge0$ be a sequence with $R_i\rightarrow\infty$ as $i\rightarrow\infty$.
Let $B_{R_i}(p_i)$ be a sequence of $(n+1)$-dimensional smooth geodesic balls with Ricci curvature $\ge-nR_i^{-2}$ such that
$(B_{R_i}(p_i),p_i)$ converges to a metric cone $(\mathbf{C},\mathbf{o})$ in the pointed Gromov-Hausdorff sense.
Suppose $\liminf_{i\to\infty}\mathcal{H}^{n+1}(B_1(p_i))>0$, and for some integer $1\le k\le n$
there is a $k$-dimensional compact metric space $X$ such that $\mathbf{C}=CX\times\R^{n-k}$, where $CX$ is a metric cone with the vertex $o$ of the cross section $X$.

For any point $x\in X$ and $r,t>0$, let $B_r(tx)$ be the ball of radius $r$ and centered at $tx\in CX$ in $CX$,
$\mathscr{B}_r(tx)$ denote the metric ball in $tX=\p B_t(o)$ with the radius $r$ and centered at $tx$, and
$$\mathscr{C}_r(x)\triangleq\{t\xi\in CX|\, \xi\in \mathscr{B}_r(x),\, |t-1|<r/2\}.$$
For $|t-1|<\f r2$ and $0<r\le1$, with Cauchy inequality we get
\begin{equation*}\aligned
&t^2+1-2t\cos(tr)=(t-1)^2+4t\sin^2(tr/2)\ge(t-1)^2+\f4{\pi^2} t^3r^2  r^2\left((t-1)^2+\f{2t^2}{\pi^2}\right)\\
&\ge\f{r^2}{9}\left((t-1)^2+t^2+8(t-1)^2+\f{t^2}8\right)\ge\f{r^2}{9}\left((t-1)^2+t^2+2t(t-1)\right)=\f{r^2}9.
\endaligned
\end{equation*}
Hence from \eqref{disXCX} we have $B_{r/3}(x)\subset\mathscr{C}_r(x)$ for all $r\in(0,1]$. Moreover, for $|t-1|<\f r2$ and $0<r\le1$
\begin{equation}\aligned
(t-1)^2+4t\sin^2(tr/2)\le(t-1)^2+t^3r^2\le\f{r^2}4+\left(1+\f r2\right)^3r^2\le4r^2,
\endaligned
\end{equation}
which implies $\mathscr{C}_r(x)\subset B_{2r}(x)$ for all $r\in(0,1]$. In all, we have
\begin{equation}\aligned\label{Belong}
B_{r/3}(x)\subset\mathscr{C}_r(x)\subset B_{2r}(x)\qquad \mathrm{for\ all}\ r\in(0,1].
\endaligned
\end{equation}
For each $r\in(0,1)$, from the co-area formula \eqref{coareanuX}
\begin{equation}\aligned\label{nuBrxEST}
&\mathcal{H}^{k+1}(\mathscr{C}_r(x))=\int_0^\infty \mathcal{H}^{k}(\p B_t(o)\cap \mathscr{C}_r(x))dt=\int_{1-r/2}^{1+r/2} \mathcal{H}^{k}(t\mathscr{B}_{r}(x))dt\\
=&\int_{1-r/2}^{1+r/2} t^k\mathcal{H}^{k}(\mathscr{B}_{r}(x))dt=\f1{k+1}\left(\left(1+\f r2\right)^{k+1}-\left(1-\f r2\right)^{k+1}\right)\mathcal{H}^{k}(\mathscr{B}_{r}(x)).
\endaligned
\end{equation}
Combining \eqref{Belong}\eqref{nuBrxEST} and Bishop-Gromov volume comparison, there is a constant $c_k\ge1$ depending only on $k$ such that
\begin{equation}\aligned\label{VolDVRr*}
\mathcal{H}^{k}(\mathscr{B}_{R}(x))\le c_k\f{R^k}{r^k}\mathcal{H}^{k}(\mathscr{B}_{r}(x))\qquad \mathrm{for\ each}\ 0<r\le R\le1.
\endaligned
\end{equation}

Let $\phi$ be a Lipschitz function on $X$, and $\la$ be a Lipschitz function on $[0,\infty)$. Put
$$f(t\xi)=\la(t)\phi(\xi)\qquad \mathrm{for\ each}\ t\xi\in CX\ \mathrm{with}\ \xi\in X.$$
From \eqref{DefLip} and \eqref{disXCX}, we have
\begin{equation}\aligned
\mathrm{Lip}\, \phi(\xi)=\limsup_{\e\rightarrow \xi, \e\neq \xi}\f{|\phi(\xi)-\phi(\e)|}{d_{X}(\xi,\e)}.
\endaligned
\end{equation}
and
\begin{equation}\aligned\label{Lipftxi}
(\mathrm{Lip}\, f)^2(t\xi)=\limsup_{\tau\e\rightarrow t\xi, \tau\e\neq t\xi}\f{|f(t\xi)-f(\tau\e)|^2}{d_{CX}(t\xi,\tau\e)^2}=\limsup_{\tau\to t,\e\to\xi, \tau\e\neq t\xi}\f{|\la(t)\phi(\xi)-\la(\tau)\phi(\e)|^2}{t^2+\tau^2-2t\tau\cos d_X(\xi,\e)}.
\endaligned
\end{equation}
Note that
\begin{equation}\aligned
|\la(t)\phi(\xi)-\la(\tau)\phi(\e)|^2=&|\la(t)\phi(\xi)-\la(\tau)\phi(\xi)|^2+|\la(\tau)\phi(\xi)-\la(\tau)\phi(\e)|^2\\
&+2\Big(\la(t)\phi(\xi)-\la(\tau)\phi(\xi)\Big)\Big(\la(\tau)\phi(\xi)-\la(\tau)\phi(\e)\Big).
\endaligned
\end{equation}
Let $\tau-t=\sin\th\sqrt{(\tau-t)^2+t\tau d_X(\xi,\e)^2}$, and $\sqrt{t\tau} d_X(\xi,\e)=\cos\th\sqrt{(\tau-t)^2+t\tau d_X(\xi,\e)^2}$ for $\th\in[0,2\pi)$.
Note that $t^2+\tau^2-2t\tau\cos d_X(\xi,\e)=(t-\tau)^2+t\tau d_X(\xi,\e)^2+t\tau\, O(d_X(\xi,\e)^3)$.
For the fixed $\th$, we have
\begin{equation}\aligned\label{lattauxie}
&\f{|\la(t)-\la(\tau)|^2|\phi|^2(\xi)+|\phi(\xi)-\phi(\e)|^2\la^2(\tau)}{t^2+\tau^2-2t\tau\cos d_X(\xi,\e)}+2\f{(\la(t)-\la(\tau))(\phi(\xi)-\phi(\e))\phi(\xi)\la(\tau)}{t^2+\tau^2-2t\tau\cos d_X(\xi,\e)}\\
\to&(\la'(t))^2\phi^2(\xi)\sin^2\th+\f{\la^2(t)}{t^2}\cos^2\th(\mathrm{Lip}\, \phi)^2(\xi)+2\f{\la(t)\la'(t)}t\phi(\xi)\sin\th\cos\th\mathrm{Lip}\, \phi(\xi)\\
=&\left(\la'(t)\phi(\xi)\sin\th+\f{\la(t)}{t}\cos\th\mathrm{Lip}\, \phi(\xi)\right)^2\qquad\ \  \mathrm{as}\ (\tau-t)^2+t\tau d_X(\xi,\e)^2\to0.
\endaligned
\end{equation}
From \eqref{Lipftxi}-\eqref{lattauxie}, for every $t\xi\in V$ we have
\begin{equation}\aligned\label{Lipftxi**}
&(\mathrm{Lip}\, f)^2(t\xi)=\sup_{\th\in[0,2\pi)}\left(\la'(t)\phi(\xi)\sin\th+\f{\la(t)}{t}\cos\th\mathrm{Lip}\, \phi(\xi)\right)^2\\
=&\f{\la^2(t)}{t^2}(\mathrm{Lip}\, \phi)^2(\xi)+(\la'(t))^2\phi^2(\xi)=\f{\la^2(t)}{t^2}|d\phi|^2(\xi)+(\la'(t))^2\phi^2(\xi),
\endaligned
\end{equation}
where $d\phi$ is the differential of $\phi$ on $X$ defined below \eqref{VolDVRr}.

From the Poincar$\mathrm{\acute{e}}$ inequality \eqref{qPoincare} and the proof of Theorem 1 in \cite{HaK}, it follows that
\begin{equation}\aligned\label{Crf}
\int_{\mathscr{C}_r(x)}\left|f-\fint_{\mathscr{C}_r(x)}f\right|^q\le c_{k}r^q\int_{\mathscr{C}_{r}(x)}|df|^q
\endaligned
\end{equation}
for any $x\in X$, $r\in(0,1]$, $q\ge1$, where $c_{k}$ is a general constant depending only on $k$.
From the co-area formula \eqref{coareanuX}, if $f(t\xi)=\phi(\xi)$ for each $t\xi\in tX$ with $\xi\in X$, then we have
\begin{equation}\aligned\label{CrfBrphi}
&\fint_{\mathscr{C}_r(x)}f=\f1{\mathcal{H}^{k+1}(\mathscr{C}_r(x))}\int_{1-\f r2}^{1+\f r2}\left(\int_{t\mathscr{B}_{r}(x)}f\right)dt
\\
=&\f1{\mathcal{H}^{k+1}(\mathscr{C}_r(x))}\int_{1-\f r2}^{1+\f r2}\left(t^k\int_{\mathscr{B}_{r}(x)}\phi\right) dt=\f1{\mathcal{H}^k(\mathscr{B}_r(x))}\int_{\mathscr{B}_{r}(x)}\phi=\fint_{\mathscr{B}_{r}(x)}\phi.
\endaligned
\end{equation}
With \eqref{Lipftxi**}, substituting \eqref{CrfBrphi} into \eqref{Crf} gives
\begin{equation}\aligned\label{*Brphi*}
\int_{\mathscr{B}_r(x)}\left|\phi-\fint_{\mathscr{B}_r(x)}\phi\right|^q\le c_{k}r^q\int_{\mathscr{B}_{r}(x)}|d\phi|^q
\endaligned
\end{equation}
for any $q\ge1$.

\bibliographystyle{amsplain}

\begin{thebibliography}{10}

\bibitem{AG} Uwe Abresch and Detlef Gromoll, On complete manifolds with nonnegative Ricci curvature, J. AMS {\bf 3} (1990), 355-374.

\bibitem{Al} W. Allard, On the first variation of a varifold, Ann. Math. {\bf 95} (1972), 417-491.

\bibitem{AFP} L. Ambrosio, N. Fusco, D. Pallara, Functions of Bounded Variation and Free Discontinuity Problems, Clarendon Press, Oxford (2000).

\bibitem{AGS} L. Ambrosio, N. Gigli, G. Savar$\mathrm{\acute{e}}$, Metric measure spaces with Riemannian Ricci curvature bounded from below, Duke Math. J. {\bf 163(7)} (2014), 1405-1490.

\bibitem{BS} Kathrin Bacher and Karl-Theodor Sturm, Localization and tensorization properties of the curvature-dimension condition for metric measure spaces, J. Funct. Anal., {\bf 259(1)} (2010), 28-56.

\bibitem{BG} E. Bombieri, E. Giusti, Harnack's inequality for elliptic differential equations on minimal surfaces, Invent. Math. {\bf 15} (1972), 24-46.

\bibitem{Ch} J. Cheeger, Differentiability of Lipschitz functions on metric measure spaces, Geom. Funct. Anal. {\bf9} (1999), 428-517.

\bibitem{CC} Jeff Cheeger and Tobias H. Colding, Lower Bounds on Ricci Curvature and the Almost Rigidity of Warped Products, Ann. Math. {\bf 144} (1996), 189-237.

\bibitem{CCo1} Jeff Cheeger, Tobias H. Colding, On the structure of spaces with Ricci curvature bounded below. I, J. Differential Geom. {\bf 46} (1997),  no. 3, 406-480.

\bibitem{CCo2} Jeff Cheeger, Tobias H. Colding, On the structure of spaces with Ricci curvature bounded below. II, J. Differential Geom. {\bf 54} (2000),  no. 1, 15-35.

\bibitem{CCo3} Jeff Cheeger, Tobias H. Colding, On the structure of spaces with Ricci curvature bounded below. III, J. Differential Geom. {\bf 54} (2000),  no. 1, 37-74.

\bibitem{CY} S. Y. Cheng and S. T. Yau, Differential equations on Riemannian manifolds and their geometric applications, Comm. Pure Appl. Math. {\bf 28}(1975), 333-354.

\bibitem{C} Tobias H. Colding, Ricci curvature and volume convergence, Ann. Math. {\bf 145} (1997), 477-501.

\bibitem{CN} Tobias H. Colding, Aaron Naber, Sharp H$\mathrm{\ddot{o}}$lder continuity of tangent cones for spaces with a lower Ricci curvature bound and applications, Ann. of Math. {\bf(2)176} (2012), no. 2, 1173-1229.

\bibitem{D0} Qi Ding, Liouville type theorems and Hessian estimates for special Lagrangian equations, arXiv:1912.00604.

\bibitem{D1} Qi Ding, Area-minimizing hypersurfaces in manifolds of Ricci curvature bounded below, arXiv:2107.11074.

\bibitem{D2} Qi Ding, Poincar\'e inequality on minimal graphs over manifolds and applications, preprint.

\bibitem{DJX} Qi Ding, J.Jost and Y.L.Xin, Existence and non-existence of area-minimizing hypersurfaces in manifolds of non-negative Ricci curvature, Amer. J. Math., {\bf 138} (2016), no.2., 287-327.

\bibitem{EKS} Matthias Erbar, Kazumasa Kuwada, Karl-Theodor Sturm, On the equivalence of the entropic curvature-dimension condition and Bochner's inequality on metric measure spaces, Invent. math. {\bf 201} (2015), 993-1071.

\bibitem{Gi} E. Giusti, Minimal surfaces and functions of bounded variation, Birkh$\mathrm{\ddot{a}}$user Boston, Inc., 1984.

\bibitem{F} T. Frankel, On the fundamental group of a compact minimal submanifold, Ann. of Math. {\bf(2)83} (1966), 68-73.

\bibitem{HaK} P. Hajlasz, and P. Koskela, Sobolev met Poincar\'e, Mem. Amer. Math. Soc., vol. {\bf 145} (2000).

\bibitem{HeK} Ernst Heintze, Hermann Karcher, A general comparison theorem with applications to volume estimates for submanifolds, Ann. Sci. $\mathrm{\acute{E}}$cole Norm. Sup. {\bf 11} (1978), no. 4, 451-470.

\bibitem{H0} S. Honda, Ricci curvature and almost spherical multi-suspension, Tohoku Math. J. {\bf 61} (2009), 499-522.

\bibitem{H1} S. Honda, Ricci curvature and convergence of Lipschitz functions, Commun. Anal. Geom. {\bf19} (2011), 79-158.

\bibitem{H2} Shouhei Honda, Harmonic functions on asymptotic cones with Euclidean volume growth, J. Math. Soc. Japan {\bf 67(1)} (2015), 69-126.

\bibitem{K} Christian Ketterer, Cones over metric measure spaces and the maximal diameter theorem, J. Math. Pures Appl. {\bf(9)103} (2015), no. 5, 1228-1275.

\bibitem{R} Tapio Rajala, Interpolated measures with bounded density in metric spaces satisfying the curvature-dimension conditions of Sturm, J. Funct. Anal. {\bf 263(4)} (2012), 896-924, .

\bibitem{S} Leon Simon, Lectures on Geometric Measure Theory, Proceedings of the center for mathematical analysis Australian national university, Vol. 3, 1983.

\bibitem{St} Elias M. Stein, Singular integrals and differentiability properties of functions, Princeton Mathematical
Series, No. 30, Princeton University Press, Princeton, N.J. 1970 xiv+290 pp.

\bibitem{wn1} N. Wickramasekera, A sharp strong maximum principle and a sharp unique continuation theorem for singular minimal hypersurfaces, Calc. Var. Partial Differential Equations {\bf 51} (2014), no. 3-4, 799-812.



\end{thebibliography}

\end{document}